\newcommand{\Id}{\mathop{\rm Id}\nolimits}
\newcommand{\xqed}[1]{%
  \leavevmode\unskip\penalty9999 \hbox{}\nobreak\hfill
  \quad\hbox{\ensuremath{#1}}}
\newcommand{\ovset}[2]{\overset{#1}{#2}}
\def\Alt{\mathop{\rm Alt}\nolimits}
\def\Sym{\mathop{\rm Sym}\nolimits}
\def\Mult{\mathop{\rm Mult}\nolimits}
\def\Hom{\mathop{\rm Hom}\nolimits}
\def\Ens{\mathop{\bf Ens}\nolimits}
\def\Proj{\mathop{\bf Proj}\nolimits}
\def\Spec{\mathop{\bf Spec}\nolimits}
\def\Spa{\mathop{\bf Spa}\nolimits}
\def\Spf{\mathop{\bf Spf}\nolimits}
\def\Lie{\mathop{\rm Lie}\nolimits}
\def\rank{\mathop{\rm rank}\nolimits}
\def\qlog{{\rm qlog}}
\def\ad{{\rm ad}}
\def\Def{{\rm \emph{Def}}}
\def\innHom{\underline{\Hom}}
\let\phi\varphi
\let\epsilon\varepsilon
\newtheorem{thm}[equation]{Theorem}
\newtheorem{cor}[equation]{Corollary}
\newtheorem{lem}[equation]{Lemma}
\newtheorem{prop}[equation]{Proposition}
\theoremstyle{definition}
\newtheorem{dfn}[equation]{Definition}                
\newtheorem{ex}[equation]{Example}              
\newtheorem{rem}[equation]{Remark}
\newtheorem{cons}[equation]{Construction}
\newtheorem{notation}[equation]{Notations}
\newcounter{example}
\renewcommand{\theexample}{\arabic{example}}
\numberwithin{equation}{section}
\newcommand{\BC}{{\mathbb{C}}}
\newcommand{\BD}{{\mathbb{D}}}
\newcommand{\BG}{{\mathbb{G}}}
\newcommand{\BM}{{\mathbb{M}}}
\newcommand{\BP}{{\mathbb{P}}}
\newcommand{\BQ}{{\mathbb{Q}}}
\newcommand{\BX}{{\mathbb{X}}}
\newcommand{\BZ}{{\mathbb{Z}}}
\newcommand{\Fm}{{\mathfrak{m}}}
\newcommand{\Fp}{{\mathfrak{p}}}
\newcommand{\FL}{{\mathfrak{L}}}
\newcommand{\CB}{{\cal B}}
\newcommand{\CE}{{\cal E}}
\newcommand{\CF}{{\cal F}}
\newcommand{\CG}{{\cal G}}
\newcommand{\CM}{{\cal M}}
\newcommand{\CO}{{\cal O}}
\newcommand{\CP}{{\cal P}}
\newcommand{\CT}{{\cal T}}
\newcommand{\CU}{{\cal U}}
\newcommand{\CV}{{\cal V}}
\newcommand{\lbb}{\llbracket}
\newcommand{\rbb}{\rrbracket}
\newcommand{\ep}{{ \wedge}}
\def\into{\hookrightarrow}
\let\onto\twoheadrightarrow
\newbox\mybox
\def\arrover#1{\mathrel{
       \setbox\mybox=\hbox spread 1em
              {\hfil$\scriptstyle#1\vphantom{g}$\hfil}
       \vbox{\offinterlineskip\copy\mybox
             \hbox to\wd\mybox{\rightarrowfill}}}}             
\def\invlim{\mathop{\vtop{\hbox{\rm lim}\vskip-8pt
        \hbox{\hskip1pt$\scriptstyle\longleftarrow$}\vskip-1pt}}}
\def\dirlim{\mathop{\vtop{\hbox{\rm lim}\vskip-8pt
        \hbox{\hskip1pt$\scriptstyle\longrightarrow$}\vskip-1pt}}}
\def\ontoover#1{\mathrel{
       \setbox\mybox=\hbox spread 1.4em{\hfil$\scriptstyle#1$\hfil}
       \vbox{\offinterlineskip\copy\mybox
             \hbox to\wd\mybox{\rightarrowfill\hskip-2.8mm
                               $\rightarrow$}}}}
\def\to{\rightarrow}
\title{A Cartesian Diagram of Rapoport-Zink Towers over Universal Covers of $p$-Divisible Groups}
\author{S. Mohammad Hadi Hedayatzadeh}
\date{}
\begin{document}
\overfullrule=0pt

\maketitle
\abstract{In \cite{MR3272049}, Scholze and Weinstein show that a certain diagram of perfectoid spaces is Cartesian. In this paper, we generalize their result. This generalization will be used in a forthcoming paper of ours (\cite{HE}) to compute certain non-trivial $\ell$-adic \'etale cohomology classes appearing in the the generic fiber of Lubin-Tate and Rapoprt-Zink towers. We also study the behavior of the vector bundle functor on the fundamental curve in $p$-adic Hodge theory, defined by Fargues-Fontaine, under multilinear morphisms.}
\normalsize
\tableofcontents

\parindent0pt


\section{Introduction}

Let $R$ be a $p$-adically complete $\BZ_{p}$-algebra and let $G$ be a $p$-divisible group over $R$. Let $\tilde{G}$ be the universal cover of $G$. This is the functor on $\text{Nilp}_{R}$ sending an $R$-algebra $S$, on which $p$ is nilpotent, to the inverse limit \[\invlim_{p.}G(S)\] where the transition morphisms are given by multiplication by $p$. When $G$ is connected, this functor, which is a sheaf of $\BQ_{p}$-vector spaces, is represented, under some mild conditions (see \cite[Proposition 3.1.3, Corollary 3.1.5]{MR3272049}) by the formal scheme \[\Spf R\lbb T_{1}^{1/p^{\infty}},\dots,T_{d}^{1/p^{\infty}}\rbb\]

Let $k$ be an algebraically closed field of characteristic $p>0$ and let $H$ be a $p$-divisible group over $k$ of height $h$ and dimension $d$. The universal cover $\tilde{H}$ lifts uniquely to $W(k)$. Let $\CM_{H}^{\infty}$ be the Rapoport-Zink space at infinite level, associated with $H$. This is the rigid analytic space classifying deformations (up to isogeny) of $H$ together with infinite Drinfeld level structure. In \cite{MR3272049}  Scholze and Weinstein show that the adic generic fiber of $\CM_{H}^{\infty}$, denoted by $\CM_{H,\eta}^{\infty,\ad}$, has a natural structure of a perfectoid space. They also prove that as an adic space it embeds canonically inside the $h$-fold product of $\tilde{H}^{\ad}_{\eta}$ and is given by two $p$-adic Hodge theoretic conditions; The first condition defining a point on a certain flag variety canonically attached to $H$ via Grothendieck-Messing deformation theory and the second condition describing the geometric points as certain modifications of vector bundles on the Fargues-Fontaine curve in $p$-adic Hodge theory.\\ 

Now assume that $H$ has dimension $1$. In \cite{HF} and \cite{H2} we have constructed exterior powers of $p$-divisible groups of dimension at most $1$. Using the highest exterior power of $H$, some ad-hoc arguments special to the highest exterior power, and their classification of $p$-divisible groups over $\CO_{\BC_{p}}$ (the ring of integers of $\BC_{p}$, the $p$-adic completion of an algebraic closure of $\BQ_{p}$), Scholze-Weinstein prove that the following diagram is Cartesian

\[\xymatrix{ \CM_{H,\eta}^{\infty,\ad}\ar[r]^{\det}\ar@{^{(}->}[d] & \CM_{\mu_{p^{\infty}},\eta}^{\infty,\ad}\ar@{^{(}->}[d]\\
\big(\tilde{H}^{\ad}_{\eta}\big)^{\times h}\ar[r]_{\det}&\tilde{\mu}^{\ad}_{p^{\infty},\eta}}\]
where the horizontal morphisms are given by the determinant morphisms (suitably defined by taking highest exterior powers), and the vertical morphisms are the embeddings to which we alluded above.\\

In this paper, we investigate the case, where instead of the highest exterior power, we take an arbitrary exterior power of $H$ and generalize Scholze-Weinstein result by proving that the following diagram is Cartesian:

\[\xymatrix{ \CM_{H,\eta}^{\infty,\ad}\ar[r]^{\Lambda^{r}}\ar@{^{(}->}[d] & \CM_{\ep^{r}H,\eta}^{\infty,\ad}\ar@{^{(}->}[d]\\
\big(\tilde{H}^{\ad}_{\eta}\big)^{\times h}\ar[r]_{\Lambda^{r}}&\big(\widetilde{\ep^{r}H}^{\ad}_{\eta}\big)^{\times \binom{h}{r}}}\]

Here the top horizontal morphism is given by using exterior powers of $p$-divisible groups and a result in \cite{HE}, where we show that Drinfeld level structures are preserved under the operation of taking exterior powers and therefore induce a natural morphism from the Lubin-Tate tower to the Rapoport-Zink tower. The lower horizontal morphism is constructed by a careful study of multilinear morphisms of vector bundles on the Fargues-Fontaine curve in $p$-adic Hodge theory, and using results in \cite{MR3272049} and \cite{courbe} relating universal cover a $p$-divisible group $G$ to the global sections of the vector bundle (of slopes between $0$ and $1$) over the Fargues-Fontaine curve associated with $G$.\\

Although we employ some similar arguments as in the case of $r=h$ (using the classification of $p$-divisible groups in terms of vector bundles over Fargues-Fontaine curve), the ad-hoc arguments used by Scholze-Weinstein in the case $r=h$ fail in the general case (note that when $r=h$, there is an isomorphism $\ep^{h}H\cong\mu_{p^{\infty}}$). In the general situation, we had to build a theory of multilinear morphisms of quasi-coherent sheaves over projective schemes and use it to study the behavior of the equivalence of categories, from the category of isocrystals to the category of vector bundles over Fargues-Fontaine curve, under mutilinear morphisms and tensor operations.\\

The main theorem of this paper (that that above diagram is Cartesian) will be used in a forthcoming work (\cite{HE}), where we use the wedge morphism on the Lubin-Tate tower (morphism $\CM^{\infty}_{H}\arrover{\Lambda^{r}}\CM_{\ep^{r}H}^{\infty}$) to study the $\ell$-adic \'etale cohomology of the generic fiber Rapoport-Zink tower. More precisely, in \cite{HE}, we study the contribution of the cohomology of the Lubin-Tate tower, via the wedge morphism, to the cohomology of the Rapoport-Zink tower. The $p$-adic Hodge theoretic description of the Rapoport-Zink tower, given by Scholze-Weinstein, is so far the best available technology for understanding the generic fiber of these spaces, as the moduli interpretation of these formal scheme is lost once we go to the generic fiber. It was therefore important to have a good understanding of the operation of the wedge morphism, which is defined in terms of the moduli property of these towers, on the generic fiber. Another motivation for such a Cartesian diagram comes from the relation to period morphisms. Let us explain this in more details. Let $G$ be a $p$-divisible group over $k$ of height $h$ and dimension $d$. The Grothendieck-Messing period morphism is a morphism from the adic generic fiber $\CM_{G,\eta}^{\infty,\ad}$ to the flag variety of rank $d$ quotients of the Dieudonn\'e module of $G$ (a rank $h$ free $W(k)$-module). In \cite{H6} we have studied various embeddings of flag varieties and in particular, we have shown than using exterior powers, one obtains a closed embedding of $\BP^{h-1}$ to the Grassmannian of rank $d$ quotients of a fixed rank $h$ vector bundle, denoted by $\BG r(h,d)$. Our main theorem implies in particular that the corresponding diagram of period morphism using exterior powers of $p$-divisible groups and the closed embedding  $\BP^{h-1}\into \BG r(h,d)$ is commutative. One should however note that this diagram is not Cartesian. In another work of ours, we will investigate the necessary modifications for making that diagram Cartesian (e.g., by incorporating the group actions of various reductive groups over these adic spaces in the diagram).\\ 

As a byproduct of our work on multilinear morphisms of vector bundles over the Fargues-Fontaine curve and the intermediary steps of the proof of the main theorem, we prove that there is a canonical isomorphism 
\[\ep^{r}\CE_{H}\cong\CE_{\ep^{r}H}\] where for a $p$-divisible group $G$ over a semiperfect ring $R$, we denote by $\CE_{G}$ the associated vector bundle over $\Proj\big(\bigoplus_{d\geq 0}\big(B^{+}_{\rm cris}(R)\big)^{\phi=p^{d}}\big)$. Here, as usual, $B^{+}_{\rm cris}(R)$ is one of Fontaine's period rings appearing in $p$-adic Hodge theory. The proof of this result, which can be stated without using Rapoport-Zink spaces and their $p$-adic Hodge theoretic description, requires the machinery of \cite{MR3272049} and our proof of the main theorem.\\

\textbf{Acknowledgments.} We would like to thank Pascal Boyer, Arthur-C\'esar Le Bras, Peter Scholze and Jared Weinstein for helpful conversations. 

\section{Preliminaries}

\subsection{Rank and the exterior power of a matrix}

In this subsection, we show that the rank of a matrix is determined by the rank of the exterior powers of it (see Lemma \ref{RankGivenByExtPow}). \\

\begin{notation}
\label{NotExtPowMat}
Let $R$ be a ring and $A$ an element of $\BM_{n}(R)$. Fix $1\leq d\leq n$. We denote by $\ep^{d}A\in\BM_{\binom{n}{d}}(R)$, the matrix whose entries are the $d$-minors of $A$.\xqed{\star}
\end{notation}

\begin{dfn}
Let $R$ be a ring and $A\in\BM_{n}(R)$. For $i\geq0$, the $i$-th \emph{determinantal ideal} of $A$, denoted by $\CU_{i}(A)$ is the ideal of $R$ generated by $i\times i$-minors of $A$. So, we have a chain:
\[0=\CU_{n+1}(A)\subset\CU_{n}(A)=\det(A)R\subset\CU_{n-1}(A)\subset\dots\]\[\dots\subset\CU_{1}(A)\subset\CU_{0}(A)=R\]\xqed{\blacktriangle}
\end{dfn}

\begin{lem}
\label{LemDetIdBaseChange}
Let $\phi:R\to S$ be a ring homomorphism, then for any $A\in\BM_{n}(R)$, and any $i\geq 0$, we have
\begin{align}
\label{EqDetIdBaseChange}
\CU_{i}\big(\phi(A)\big)=\CU_{i}(A)S
\end{align}
\end{lem}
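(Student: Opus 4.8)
The plan is to reduce both sides of \eqref{EqDetIdBaseChange} to the same generating set by exploiting the fact that forming minors commutes with the ring homomorphism $\phi$. The starting point is the naturality of the determinant: for any square matrix $B\in\BM_{m}(R)$, the Leibniz expansion $\det B=\sum_{\sigma}\sgn(\sigma)\prod_{j}B_{j,\sigma(j)}$ exhibits $\det B$ as a polynomial in the entries of $B$ with coefficients in $\BZ$. Since $\phi$ respects sums, products and the integer coefficients, I would conclude that $\det\big(\phi(B)\big)=\phi\big(\det B\big)$, where $\phi(B)$ denotes the entrywise image of $B$. This is the only place where the hypothesis that $\phi$ is a ring homomorphism, rather than merely additive, is used.

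Next I would apply this identity to the $i\times i$ submatrices of $A$. Fixing a choice of $i$ rows and $i$ columns, the associated submatrix $B$ of $A$ maps under the entrywise $\phi$ to the submatrix of $\phi(A)$ indexed by the same rows and columns, because passing to a submatrix merely selects entries and therefore commutes with $\phi$. Combined with the naturality of the determinant, this shows that the $i\times i$ minor of $\phi(A)$ attached to any fixed pair of index sets equals $\phi$ of the corresponding $i\times i$ minor of $A$. Hence the family of all $i\times i$ minors of $\phi(A)$ is exactly the image under $\phi$ of the family of all $i\times i$ minors of $A$.

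Finally I would read off the equality of ideals from the definitions. By definition $\CU_{i}\big(\phi(A)\big)$ is the ideal of $S$ generated by the $i\times i$ minors of $\phi(A)$, while the extension $\CU_{i}(A)S$ is the ideal of $S$ generated by $\phi$ applied to a generating set of $\CU_{i}(A)$, namely by $\phi$ of the $i\times i$ minors of $A$. The previous step identifies these two generating sets, so the two ideals coincide, giving \eqref{EqDetIdBaseChange}; the degenerate cases $i=0$ (both sides equal $S$) and $i>n$ (both sides vanish) are immediate. I do not expect any genuine obstacle here: the statement is essentially the observation that $\det$ is a $\BZ$-polynomial in the matrix entries and hence is compatible with base change, so the entire argument is a formal unwinding of the definition of the determinantal ideals.
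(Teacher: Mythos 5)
Your proof is correct and is precisely the unwinding of what the paper dismisses with ``this follows immediately from the definition'': minors commute with $\phi$ because $\det$ is a $\BZ$-polynomial in the entries, so the two ideals have identical generating sets. There is nothing to add; your argument is the intended one, just written out in full.
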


\begin{proof}
This follows immediately from the definition.
\end{proof}



\begin{dfn}
\label{DefRankMatrix}
Let $R$ be a ring and $A$ an element of $\BM_{n}(R)$. We say that the \emph{rank} of $A$ is $r$ if all minors of size $r+1$ are zero and all minors of size $r$ generate the unit ideal of $R$. In other words, we have $\CU_{r}(A)=R$ and $\CU_{r+1}(A)=0$.\xqed{\blacktriangle}
\end{dfn}

\begin{rem}
\label{RemRankBaseChange}
\begin{enumerate}
\item Note that the rank of a matrix is not always defined.
\item It follows from Lemma \ref{LemDetIdBaseChange} that the rank of a matrix (when it is defined) is preserved under base change.\xqed{\lozenge}
\end{enumerate}
\end{rem}



\begin{lem}
\label{LemRankLocal}
Let $R$ be a ring and $A\in\BM_{n}(R)$. Then, $\rank(A)=r$ if and only if for all $\Fp\in\Spec(R)$, $\rank(A_{\Fp})=r$, where $A_{\Fp}$ is the image of $A$ in $\BM_{n}(R_{\Fp})$.
\end{lem}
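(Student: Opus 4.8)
The plan is to translate everything into the language of determinantal ideals and then invoke the base-change Lemma \ref{LemDetIdBaseChange}. By Definition \ref{DefRankMatrix}, the equality $\rank(A) = r$ means exactly $\CU_r(A) = R$ together with $\CU_{r+1}(A) = 0$, and likewise $\rank(A_\Fp) = r$ means $\CU_r(A_\Fp) = R_\Fp$ together with $\CU_{r+1}(A_\Fp) = 0$. Since Lemma \ref{LemDetIdBaseChange} gives $\CU_i(A_\Fp) = \CU_i(A) R_\Fp$ for every $i$ and every $\Fp \in \Spec(R)$, the assertion reduces to a purely commutative-algebra statement about the two ideals $J := \CU_r(A)$ and $I := \CU_{r+1}(A)$: namely that $J = R$ if and only if $J R_\Fp = R_\Fp$ for all $\Fp$, and that $I = 0$ if and only if $I R_\Fp = 0$ for all $\Fp$. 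I would set this reformulation up first.

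For the forward implication there is then nothing further to do: if $\CU_r(A) = R$ and $\CU_{r+1}(A) = 0$, applying base change at an arbitrary prime yields $\CU_r(A) R_\Fp = R_\Fp$ and $\CU_{r+1}(A) R_\Fp = 0$, hence $\rank(A_\Fp) = r$.

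The content lies in the converse, which I would deduce from two standard local-global facts, carried out in this order. To show $I = 0$: an ideal all of whose localizations vanish is itself zero, since for $x \in I$ the condition $I R_\Fp = 0$ forces some $s \notin \Fp$ with $sx = 0$, so the annihilator $\mathrm{Ann}_R(x)$ is contained in no prime and therefore equals $R$, giving $x = 0$. To show $J = R$: the hypothesis $J R_\Fp = R_\Fp$ forces $J \not\subseteq \Fp$ for every prime $\Fp$, so $J$ lies in no maximal ideal and must be all of $R$. The one point genuinely deserving care, and the closest thing here to an obstacle, is that the hypothesis furnishes the \emph{same} integer $r$ at every prime; it is precisely this uniformity that permits concluding $\CU_{r+1}(A) = 0$ globally from its vanishing at each $\Fp$. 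Without it—that is, if the local ranks were merely required to exist rather than to agree—the global rank need not be defined, and the statement would fail.
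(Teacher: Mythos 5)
Your proposal is correct and follows essentially the same route as the paper's proof: both directions are reduced via Lemma \ref{LemDetIdBaseChange} to the local-global behavior of the determinantal ideals $\CU_{r}(A)$ and $\CU_{r+1}(A)$, with the converse resting on the facts that an ideal vanishing in every localization is zero and an ideal that is the unit ideal in every localization is the unit ideal. You merely spell out these two standard commutative-algebra facts (via annihilators and avoidance of maximal ideals) that the paper invokes implicitly.
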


\begin{proof}
If $\rank(A)=r$, then by previous remark, $A$ has rank $r$ in all localizations. Now assume that $A$ has rank $r$ in all localizations. This means that $\CU_{r+1}(A)$ is zero in all localizations (using \ref{EqDetIdBaseChange}), and so it is zero. Similarly, $\CU_{r}(A)$ is the unit ideal in all localizations, and so it is the unit ideal.
\end{proof}

\begin{lem}
\label{LemRankMatQuot}
Let $R$ be a ring and $A\in\BM_{n}(R)$. Then, $\rank(A)=r$ if and only if the cokernel of the $R$-linear morphism $:R^{n}\to R^{n}$ defined by $A$ is a projective $R$-module of rank $n-r$.
\end{lem}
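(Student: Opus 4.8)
The plan is to reduce everything to the case of a local ring and then exploit the local characterizations of both sides. Set $M=\cokernel(A)$; since $M$ is the cokernel of a map between finite free modules it is finitely presented, and because localization is exact we have $M_{\Fp}=\cokernel(A_{\Fp})$ for every $\Fp\in\Spec(R)$. I would invoke two standard local-to-global facts: first, a finitely presented module is projective of constant rank $s$ if and only if each localization $M_{\Fp}$ is free of rank $s$; second, by Lemma \ref{LemRankLocal}, $\rank(A)=r$ if and only if $\rank(A_{\Fp})=r$ for all $\Fp$. Both sides of the claimed equivalence are thus testable one localization at a time, so it suffices to prove: over a local ring $(R,\Fm)$ with residue field $\kappa$, one has $\rank(A)=r$ if and only if $\cokernel(A)$ is free of rank $n-r$. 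Throughout I use the elementary fact (via the Cauchy--Binet formula) that multiplying $A$ on either side by an invertible matrix changes neither the determinantal ideals $\CU_{i}(A)$ nor the isomorphism class of $\cokernel(A)$.

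For the forward direction over the local ring, suppose $\CU_{r}(A)=R$ and $\CU_{r+1}(A)=0$. Since $R$ is local and $\CU_{r}(A)=R$, some $r\times r$ minor of $A$ is a unit; after permuting rows and columns I may assume the top-left $r\times r$ block $A_{11}$ has unit determinant, hence is invertible over $R$. Left- and right-multiplying by the evident unipotent block matrices clears the off-diagonal blocks and replaces $A$ by $\begin{pmatrix}I_{r}&0\\0&S\end{pmatrix}$, where $S=A_{22}-A_{21}A_{11}^{-1}A_{12}$ is the Schur complement. Each entry of $S$ appears, up to sign, as an $(r+1)\times(r+1)$ minor of this block matrix, hence lies in $\CU_{r+1}(A)=0$, so $S=0$. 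Reading off the cokernel of $\begin{pmatrix}I_{r}&0\\0&0\end{pmatrix}$ gives $\cokernel(A)\cong R^{n-r}$.

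For the reverse direction over the local ring, suppose $M=\cokernel(A)$ is free of rank $n-r$. Then the exact sequence $0\to\image(A)\to R^{n}\to M\to 0$ splits, so $\image(A)$ is a finitely generated projective, hence free, direct summand of $R^{n}$ of rank $r$. Choosing a basis of $R^{n}$ adapted to the splitting (a change of basis in the target, i.e.\ left multiplication by an invertible matrix) I may assume $\image(A)=R^{r}\oplus 0$, which forces the bottom $n-r$ rows of $A$ to vanish and hence $\CU_{r+1}(A)=0$. To see $\CU_{r}(A)=R$, reduce modulo $\Fm$: right-exactness gives $\kappa^{n}\xrightarrow{\bar{A}}\kappa^{n}\to M\otimes\kappa\to 0$ with $M\otimes\kappa\cong\kappa^{n-r}$, so $\bar{A}$ has rank $r$ over the field $\kappa$ and therefore a nonzero $r\times r$ minor; lifting, some $r\times r$ minor of $A$ is a unit, i.e.\ $\CU_{r}(A)=R$. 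Combining the two directions with the reduction of the first paragraph completes the proof.

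The main obstacle I anticipate is the forward direction in the local case: unlike over a field there is no Smith normal form, so the vanishing $S=0$ must be extracted purely from $\CU_{r+1}(A)=0$, and one must check that the block row and column operations are genuinely invertible over $R$ (which they are, since $A_{11}$ has unit determinant) so that both $\CU_{i}(A)$ and $\cokernel(A)$ are preserved. The remaining bookkeeping --- that finitely presented plus locally free of constant rank implies projective, and that the resulting rank function is then constant equal to $n-r$ --- is standard and should pose no real difficulty.
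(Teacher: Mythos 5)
Your proof is correct, but it takes a genuinely different route from the paper. The paper's proof is a two-line reduction to the theory of Fitting ideals: it observes that $\text{Fit}_{i}(\cokernel A)=\CU_{n-i}(A)$ and then quotes the Stacks Project lemma asserting that a finitely presented module $W$ is finite projective of rank $n-r$ if and only if $\text{Fit}_{n-r-1}(W)=0$ and $\text{Fit}_{n-r}(W)=R$ --- which is verbatim the condition $\rank(A)=r$. You instead reprove that characterization by hand: you localize (using Lemma \ref{LemRankLocal} on one side and ``finitely presented plus locally free of constant rank implies projective'' on the other), and in the local case you run the generalized pivot argument --- a unit $r\times r$ minor, block row/column operations, and vanishing of the Schur complement forced by $\CU_{r+1}(A)=0$ --- for the forward direction, and the splitting of $0\to\image(A)\to R^{n}\to M\to 0$ plus reduction modulo $\Fm$ for the converse. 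All the steps check out, including the two points that need care: invariance of $\CU_{i}$ and of the cokernel under invertible row/column operations, and the fact that over a local ring an ideal of minors equal to $R$ contains a unit minor. What the paper's approach buys is brevity and delegation to a standard reference; what yours buys is a self-contained, elementary argument, and it is worth noting that your pivot technique is exactly the one the paper itself invokes later (citing Lombardi--Quitt\'e 5.9) in the proof of Lemma \ref{RankGivenByExtPow}, so your proof unifies the two lemmas around a single method.
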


\begin{proof}
Let us denote this cokernel by $W$. Recall that the $i$-th \emph{Fitting ideal}, $\text{Fit}_{i}(W)$, of $W$ is the ideal $\CU_{n-i}(A)$. \cite[\href{https://stacks.math.columbia.edu/tag/07ZD}{Lemma 07ZD}]{stacks-project} states that $W$ is finitely generated projective of rank $n-r$ if and only if $\text{Fit}_{n-r-1}(W)=0$ and $\text{Fit}_{n-r}(W)=R$, which is equivalent to $\rank(A)=r$
\end{proof}

\begin{lem}
\label{LemExSeqExtPowMod}
Let $R$ be a ring and let \[M\arrover{\alpha} N\arrover{\pi} W\to0\] be an exact sequence of finitely generated projective $R$-modules with $W$ of rank 1 and $M$ and $N$ of the same rank. Let $K$ be the kernel of $\pi$. Choose $d<\rank M$. Then we have a canonical exact sequence
\[\ep^{d}M\arrover{\ep^{d}\alpha}\ep^{d}N\to \ep^{d-1}K\otimes_{R}W\to 0.\]
\end{lem}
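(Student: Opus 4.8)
The plan is to reduce everything to the split situation and a local computation, after first extracting the algebraic structure hidden in the hypotheses. First I would record two consequences of the assumptions. Since $W$ is projective of rank $1$, the surjection $\pi$ admits a section, so the sequence $0\to K\into N\arrover{\pi}W\to 0$ splits and $K$ is finitely generated projective of rank $\rank N-1=\rank M-1$. Exactness at $N$ means $\image\alpha=\kernel\pi=K$, so $\alpha$ factors canonically as $M\arrover{\alpha'}K\into N$ with $\alpha'$ surjective; as $K$ is projective, $\alpha'$ is a split surjection as well.

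Next I would construct the map $\ep^{d}N\to\ep^{d-1}K\otimes_{R}W$ canonically. The most explicit device is the contraction against $\pi\in\Hom(N,W)$, namely $\iota_{\pi}\colon\ep^{d}N\to\ep^{d-1}N\otimes_{R}W$ sending $n_{1}\ep\cdots\ep n_{d}$ to $\sum_{i}(-1)^{i-1}(n_{1}\ep\cdots\widehat{n_{i}}\cdots\ep n_{d})\otimes\pi(n_{i})$. Equivalently, this is (up to sign) the quotient map in the canonical two-step filtration of $\ep^{d}N$ determined by the subobject $K\subset N$: because $\rank W=1$ forces $\ep^{j}W=0$ for $j\geq 2$, that filtration collapses to a short exact sequence $0\to\ep^{d}K\to\ep^{d}N\arrover{\beta}\ep^{d-1}K\otimes_{R}W\to 0$, and I take its surjection $\beta$ as the second map of the asserted sequence. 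That $\iota_{\pi}$ lands in the submodule $\ep^{d-1}K\otimes_{R}W\subset\ep^{d-1}N\otimes_{R}W$, that it is surjective onto it, and that its kernel is exactly $\ep^{d}K$, are all statements about locally split inclusions of projective modules, and may be verified after localizing at each prime of $R$, where $N\cong K\oplus W$ and the computation is immediate.

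With $\beta$ in hand, exactness of $\ep^{d}M\arrover{\ep^{d}\alpha}\ep^{d}N\arrover{\beta}\ep^{d-1}K\otimes_{R}W\to 0$ is itself a local statement, so I would check it after localizing where both splittings above are available. There, surjectivity of $\beta$ is the previous paragraph. For exactness in the middle I use the factorization $\ep^{d}\alpha=(\ep^{d}\iota)\circ(\ep^{d}\alpha')$: since $\alpha'$ is a split surjection, $\ep^{d}\alpha'$ is surjective onto $\ep^{d}K$ (here the bound $d<\rank M$, giving $d\leq\rank K$, guarantees this is the expected surjection of nonzero modules), and since $\iota$ is a split injection, $\ep^{d}\iota$ is injective with image the submodule $\ep^{d}K\subset\ep^{d}N$. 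Hence $\image(\ep^{d}\alpha)=\ep^{d}\iota(\ep^{d}K)=\ep^{d}K=\kernel\beta$, which is precisely the required exactness.

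The main obstacle is the middle step: producing $\beta$ canonically and pinning down its kernel and image. Everything there rests on the rank-one hypothesis on $W$, which makes $\ep^{\geq 2}W$ vanish so that the exterior-power filtration retains only the two graded pieces $\ep^{d}K$ and $\ep^{d-1}K\otimes_{R}W$; the equal-rank hypothesis $\rank M=\rank N$ and the bound $d<\rank M$ enter only to force $\rank K=\rank M-1\geq d$, ensuring $\ep^{d}K$ and $\ep^{d-1}K$ are the nonzero modules the statement intends. Once $\beta$ and its short exact sequence are in place, the remaining verifications are just the standard behavior of $\ep^{d}$ on split injections and split surjections, checked locally and then valid globally since exactness of a complex of $R$-modules may be tested at all primes.
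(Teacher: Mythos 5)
Your proof is correct and follows essentially the same route as the paper: split the sequence $0\to K\to N\to W\to 0$ using projectivity of $W$, use $\rank W=1$ to obtain the short exact sequence $0\to\ep^{d}K\to\ep^{d}N\to\ep^{d-1}K\otimes_{R}W\to 0$, and conclude from surjectivity of $\ep^{d}M\to\ep^{d}K$. Your extra care in exhibiting the second map canonically (as contraction against $\pi$, independent of the chosen splitting) and in verifying exactness locally only makes explicit what the paper's proof leaves implicit.
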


\begin{proof}
Let $K$ denote the kernel of $\pi$, then, it is finitely generated projective and we have a split short exact sequence
\[0\to K\to N\to W\to 0\] and so, since $W$ has rank $1$, we have $\ep^{d}N\cong \ep^{d}K\oplus \ep^{d-1}K\otimes W$. This means that the sequence
\[0\to\ep^{d}K\to\ep^{d}N\to \ep^{d-1}K\otimes W\to 0\] is exact. Now, since $\ep^{d}M\to \ep^{d}K$ is an epimorphism, it follows that the sequence 
\[\ep^{d}M\arrover{\ep^{d}\alpha}\ep^{d}N\to \ep^{d-1}K\otimes_{R}W\to 0\] is exact as desired. 
\end{proof}

\begin{lem}
\label{RankGivenByExtPow}
Let $R$ be a ring, $A\in\BM_{n}(R)$ and $d<n$. Then $A$ has rank $n-1$ if and only if the matrix $\ep^{d}A\in\BM_{\binom{n}{d}}(R)$ has rank $\binom{n-1}{d}$.
\end{lem}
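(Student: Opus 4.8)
The plan is to identify the matrix $\ep^{d}A$ with the linear map obtained by applying $\ep^{d}$ to the endomorphism $A\colon R^{n}\to R^{n}$: in the standard basis of $\ep^{d}R^{n}\cong R^{\binom{n}{d}}$ indexed by the $d$-element subsets, the matrix of $\ep^{d}(A)$ has as entries precisely the $d$-minors of $A$, i.e.\ it is $\ep^{d}A$ (Notation \ref{NotExtPowMat}). Consequently $\cokernel(\ep^{d}A)\cong\cokernel\big(\ep^{d}(A)\big)$, and via Lemma \ref{LemRankMatQuot} each of the two rank conditions becomes a statement about the projectivity and rank of a cokernel. I would treat the two implications separately, the forward one being short and the converse being the real work.

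For $\rank(A)=n-1\Rightarrow\rank(\ep^{d}A)=\binom{n-1}{d}$: by Lemma \ref{LemRankMatQuot} the cokernel $W$ of $A$ is projective of rank $1$, so
\[R^{n}\arrover{A}R^{n}\to W\to0\]
satisfies the hypotheses of Lemma \ref{LemExSeqExtPowMod} with $M=N=R^{n}$ and $d<n=\rank M$. That lemma identifies $\cokernel(\ep^{d}A)$ with $\ep^{d-1}K\otimes_{R}W$, where $K=\ker(R^{n}\to W)$ is projective of rank $n-1$; hence this cokernel is projective of rank $\binom{n-1}{d-1}$. Applying Lemma \ref{LemRankMatQuot} again to $\ep^{d}A$ and Pascal's identity $\binom{n}{d}=\binom{n-1}{d-1}+\binom{n-1}{d}$ gives $\rank(\ep^{d}A)=\binom{n}{d}-\binom{n-1}{d-1}=\binom{n-1}{d}$.

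For the converse, by Lemma \ref{LemRankLocal} it suffices to show $\rank(A_{\Fp})=n-1$ for every $\Fp\in\Spec(R)$, so I may assume $R$ is local with residue field $\kappa$; write $\bar{A}$ for the image of $A$ in $\BM_{n}(\kappa)$. Base-changing the hypothesis to $\kappa$ (rank is preserved under base change, Remark \ref{RemRankBaseChange}) and using the standard fact that over a field $\rank(\ep^{d}\bar{A})=\binom{\rho}{d}$ with $\rho=\rank_{\kappa}(\bar{A})$, the equality $\binom{\rho}{d}=\binom{n-1}{d}$ together with the strict monotonicity of $m\mapsto\binom{m}{d}$ for $m\geq d$ (note $d\leq n-1$, so $\binom{n-1}{d}\geq1$) forces $\rho=n-1$. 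In particular some $(n-1)$-minor of $\bar{A}$ is nonzero, hence some $(n-1)$-minor of $A$ is a unit and $\CU_{n-1}(A)=R$.

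The remaining point, $\CU_{n}(A)=\det(A)R=0$, is the main obstacle: the residue-field argument only shows $\det(A)$ is nilpotent, whereas it must vanish exactly. I would resolve this by a normal-form computation over the local ring $R$. Since some $(n-1)$-minor is a unit, I can multiply $A$ on both sides by invertible matrices (permutations moving that minor to the top-left $(n-1)\times(n-1)$ block, then clearing and scaling) to reduce to $A=\diag(1,\dots,1,c)$ with $(c)=(\det A)$; such operations multiply $\ep^{d}A$ by the invertible matrices $\ep^{d}P,\ep^{d}Q$ and therefore preserve every determinantal ideal $\CU_{i}(\ep^{d}A)$. For this diagonal matrix $\ep^{d}A$ is diagonal with $\binom{n-1}{d}$ entries equal to $1$ (the $d$-subsets avoiding the last index) and $\binom{n-1}{d-1}$ entries equal to $c$, so $\CU_{\binom{n-1}{d}+1}(\ep^{d}A)=(c)=(\det A)$. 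The hypothesis $\rank(\ep^{d}A)=\binom{n-1}{d}$ forces $\CU_{\binom{n-1}{d}+1}(\ep^{d}A)=0$, whence $\det(A)=0$ and $\CU_{n}(A)=0$. Combined with $\CU_{n-1}(A)=R$ this yields $\rank(A_{\Fp})=n-1$ for every $\Fp$, and Lemma \ref{LemRankLocal} concludes. (Alternatively, the containment $\CU_{n}(A)\subseteq\CU_{\binom{n-1}{d}+1}(\ep^{d}A)$ could be proved as a universal identity over the generic matrix $\BZ[x_{ij}]$, but the local normal form seems cleaner.)
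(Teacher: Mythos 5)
Your proposal is correct and follows essentially the same route as the paper's own proof: the forward direction via Lemma \ref{LemRankMatQuot} and the exact sequence of Lemma \ref{LemExSeqExtPowMod}, and the converse via localization (Lemma \ref{LemRankLocal}), the residue-field computation forcing $\rho=n-1$, and then the reduction of $A$ to the normal form $\diag(1,\dots,1,c)$ by invertible row and column operations --- which is exactly the ``generalized pivot method'' the paper invokes --- to conclude $\det(A)=0$ from the rank hypothesis on $\ep^{d}A$. Your explicit remark that the residue-field argument alone only gives nilpotence of $\det(A)$ is a good articulation of why this last step is needed, but it is the same argument.
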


\begin{proof}
Assume that $\rank(A)=n-1$. Then, by Lemma \ref{LemRankMatQuot}, we have an exact sequence 
\[R^{n}\arrover{A}R^{n}\to W\to 0\] where $W$ is free of rank $1$. By Lemma \ref{LemExSeqExtPowMod}, we have an exact sequence 
\[\ep^{d}(R^{n})\arrover{\ep^{d}A}\ep^{d}(R^{n})\to \ep^{d-1}K\otimes_{R}W\to 0\]
 where $K$ denotes the image of $A:R^{n}\to R^{n}$ and has rank $n-1$. The free $R$-module $\ep^{d-1}K\otimes_{R}W$ has rank $\binom{n-1}{d-1}$ and so, again by Lemma \ref{LemRankMatQuot}, $\ep^{d}A$ has rank $\binom{n}{d}-\binom{n-1}{d-1}=\binom{n-1}{d}$.\\
 
 Conversely, assume that $\ep^{d}A$ has rank $\binom{n-1}{d}$. By Lemma \ref{LemRankLocal}, we can assume that $R$ is a local ring with maximal ideal $\Fm$ and residue field $\kappa$. Let $r$ be the rank of $A$ modulo $\Fm$ (that we denote by $\bar{A}$). Thus $\bar{A}$ is equivalent (over $\kappa$) to the matrix
 \[\begin{pmatrix}I_{r}&\mathbf{0}\\\mathbf{0}&\mathbf{0}
\end{pmatrix}\]
(here $I_{r}$ is the identity matrix of size $r$) which implies that $\ep^{d}A$ modulo $\Fm$ is equivalent to the matrix
 \[\begin{pmatrix}I_{\binom{r}{d}}&\mathbf{0}\\\mathbf{0}&\mathbf{0}
\end{pmatrix}\]
We therefore have:
\[\binom{n-1}{d}=\rank(\ep^{d}A)=\rank(\ep^{d}\bar{A})=\binom{r}{d}\] where the second equality follows from Remark \ref{RemRankBaseChange} (ii). Since $d<n$, this implies that $r=n-1$.\\

So, $\CU_{n-1}(\bar{A})=\kappa$, and therefore the determinantal ideal $\CU_{n-1}(A)$ is the unit ideal. In order to prove that $A$ has rank $n-1$, we have to show that $\det(A)=0$. Since there is an invertible minor of size $n-1$, by a generalized pivot method (see e.g. \cite[5.9]{MR3408454}), we can show that $A$ is equivalent to the matrix
 \[\begin{pmatrix}I_{n-1}&\mathbf{0}\\\mathbf{0}&\det(A)
\end{pmatrix}\]
and so, $\ep^{d}A$ is equivalent to the matrix 

 \[\begin{pmatrix}I_{\binom{n-1}{d}}&\mathbf{0}\\\mathbf{0}&\det(A)I_{\binom{n-1}{d-1}}
\end{pmatrix}\]
Since $\ep^{d}A$ has rank $\binom{n-1}{d}$, this implies that $\det(A)=0$.
\end{proof}

\subsection{Elements from $p$-adic Hodge theory}

In this subsection we recall some definitions and results from $p$-adic Hodge theory.

\begin{dfn}
Let $R$ be a ring of characteristic $p>0$. It is called \emph{semiperfect} if its Frobenius is surjective.\xqed{\blacktriangle}
\end{dfn}

We have the following result of Fontaine:

\begin{prop}[\cite{MR3272049}, Proposition 4.1.3.] Let $R$ be a semiperfect ring of characteristic $p>0$. Let $R^{\flat}:=\invlim_{\rm Frob}R$ be the tilt of $R$. Denote by $A_{\rm cris}(R)$ the $p$-adic completion of the PD hull of the surjection $W(R^{\flat})\onto R$. Then, $A_{\rm cris}(R)$ is the universal $p$-adically complete PD thickening of $R$, and its construction is functorial in $R$. In particular it has a natural Frobenius morphism $\phi:A_{\rm cris}(R)\to A_{\rm cris}(R)$ coming from the Frobenius of $R$.
\end{prop}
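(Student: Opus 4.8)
The plan is to show that $A_{\mathrm{cris}}(R)$, together with the divided power ideal $J=\kernel\big(A_{\mathrm{cris}}(R)\onto R\big)$, is the initial object in the category of $p$-adically complete PD thickenings of $R$: the objects are pairs $(A,I)$ where $A$ is a $p$-adically complete $\BZ_{p}$-algebra, $I\subseteq A$ is an ideal carrying a divided power structure $\gamma$ compatible with the canonical divided powers on $(p)$, and $A/I\cong R$, with morphisms the PD ring maps over $R$. Since $R$ has characteristic $p$, such an ideal $I$ automatically contains $p$. The argument follows the two steps by which $A_{\mathrm{cris}}(R)$ is defined: first lift the structure map $\theta\colon W(R^{\flat})\onto R$ to $A$, then invoke the universal property of the divided power envelope followed by $p$-adic completion. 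The input tools are the functoriality of $W(-)$ on perfect rings and the universal property of the PD envelope.

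The heart of the argument is the following lifting lemma, in which the perfectness of $R^{\flat}$ and the divided powers on $I$ are both essential. Given a thickening $(A,I,\gamma)$, I claim the composite $R^{\flat}\onto R=A/I$ lifts uniquely to a ring homomorphism $W(R^{\flat})\to A$ compatible with $\theta$. For $x=(x_{0},x_{1},\dots)\in R^{\flat}=\invlim_{\mathrm{Frob}}R$, choose arbitrary lifts $\hat{x}_{i}\in A$ of $x_{i}$ and set $[x]:=\lim_{i}\hat{x}_{i}^{p^{i}}$. The key estimate is that $a\equiv b\pmod{I}$ implies $a^{p^{n}}\equiv b^{p^{n}}\pmod{p^{n}A}$: for $n=1$ this holds because $\binom{p}{k}\in pA$ for $1\le k\le p-1$ and because $c^{p}=p!\,\gamma_{p}(c)\in pA$ for $c\in I$, and the general case follows by induction. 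Applying it to $a=\hat{x}_{i+1}^{p}$ and $b=\hat{x}_{i}$ (which agree modulo $I$ since $x_{i+1}^{p}=x_{i}$) shows the sequence $\hat{x}_{i}^{p^{i}}$ is $p$-adically Cauchy, so $[x]$ exists, is independent of the chosen lifts, is multiplicative, and satisfies $[x]\equiv x_{0}\pmod{I}$. Because $[\,\cdot\,]\colon R^{\flat}\to A$ is multiplicative and $R^{\flat}$ is perfect, the standard Witt-vector computation shows that the induced map $W(R^{\flat})\to A$, sending a Witt vector to the $p$-adically convergent sum of $p^{n}$ times the lifts of its Teichm\"uller components, is additive, hence a ring homomorphism. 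The main obstacle of the whole proof is precisely this lemma, and within it the verification that the multiplicative lift assembles into a genuine ring homomorphism.

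Having lifted $\theta$ to $\tilde\theta\colon W(R^{\flat})\to A$, the compatibility $\tilde\theta\bmod I=\theta$ forces $\tilde\theta(\kernel\theta)\subseteq I$. Since $I$ carries divided powers compatible with $(p)$, the universal property of the divided power envelope $D$ of $(W(R^{\flat}),\kernel\theta)$, taken compatibly with the canonical divided powers on $(p)$, yields a unique PD homomorphism $D\to A$ extending $\tilde\theta$, and $p$-adic completeness of $A$ makes it factor uniquely through $A_{\mathrm{cris}}(R)=\widehat{D}$. This map is a morphism over $R$, and it is unique because each step (the Witt lift, the PD extension, the factorization through the completion) is unique; this establishes the universal property. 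That $A_{\mathrm{cris}}(R)$ itself lies in the category is built into the construction: $A_{\mathrm{cris}}(R)/J\cong R$ and $J$ is a PD ideal compatible with $(p)$.

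Finally, functoriality is immediate from the universal property, or directly: a homomorphism $R\to R'$ induces $R^{\flat}\to (R')^{\flat}$ by functoriality of the tilt, hence $W(R^{\flat})\to W((R')^{\flat})$ compatibly with the two structure maps, and passing to PD envelopes and completions gives $A_{\mathrm{cris}}(R)\to A_{\mathrm{cris}}(R')$. The natural Frobenius is then nothing but $A_{\mathrm{cris}}$ applied to the Frobenius endomorphism $\mathrm{Frob}_{R}\colon R\to R$, $x\mapsto x^{p}$; on $W(R^{\flat})$ it restricts to the Witt-vector Frobenius induced by the Frobenius automorphism of the perfect ring $R^{\flat}$, which descends through the PD envelope and the completion to the asserted $\phi\colon A_{\mathrm{cris}}(R)\to A_{\mathrm{cris}}(R)$.
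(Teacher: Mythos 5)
The paper itself gives no proof of this proposition: it is quoted as a known result of Fontaine, with the citation \cite{MR3272049}, Proposition 4.1.3, so there is no in-paper argument to compare yours against. Your proof is correct and is essentially the argument from that cited source: the divided powers on $I$ (via $c^{p}=p!\,\gamma_{p}(c)\in pA$, together with $p\in I$ forced by ${\rm char}(R)=p$) are exactly what makes the Teichm\"uller-type limit $[x]=\lim_{i}\hat{x}_{i}^{p^{i}}$ converge and be independent of choices, yielding the lift $W(R^{\flat})\to A$; the universal property of the PD envelope compatible with the divided powers on $(p)$, followed by $p$-adic completion, then gives the unique map $A_{\rm cris}(R)\to A$, and functoriality and the Frobenius $\phi$ follow as you say. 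The one step you leave compressed --- that the multiplicative lift $[\,\cdot\,]\colon R^{\flat}\to A$ induces an additive, hence ring, homomorphism $W(R^{\flat})\to A$ even when $A$ has $p$-torsion --- is not covered by the classical Serre argument for strict $p$-rings; it is the universal property of Witt vectors of perfect rings (reduction mod $p$ gives a bijection $\Hom(W(S),A)\cong\Hom(S,A/pA)$ for $A$ $p$-adically complete and $S$ perfect, provable from the vanishing of the cotangent complex of $S$ over $\BF_{p}$, and one checks $[\,\cdot\,]$ is additive mod $p$ by the Frobenius identity in $A/pA$). Citing that statement, rather than invoking ``the standard Witt-vector computation,'' would make the argument fully complete.
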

	
Set $B^{+}_{\rm cris}(A):=A_{\rm cris}(R)[1/p]$.

\begin{dfn}
\label{DieuMod}
Let $R$ be a semiperfect ring of characteristic $p>0$.
\begin{enumerate}
\item A \emph{Dieudonn\'e module} over $R$ is a finitely generated projective $A_{\rm cris}(R)$-module $M$ together with $A_{\rm cris}(R)$-linear homomorphisms
\begin{align*}
F&:M\otimes_{A_{\rm cris}(R),\phi}A_{\rm cris}(R)\to M\\
V&:M\to M\otimes_{A_{\rm cris}(R),\phi}A_{\rm cris}(R)
\end{align*}
such that $FV=p=VF$. 
\item A \emph{rational Dieudonn\'e module} over $R$ is a finitely generated projective $B^{+}_{\rm cris}(R)$-module $M$ with morphisms $F$ and $V$ as above (replacing $A$ with $B^{+}$ everywhere).
\item An \emph{isocrystal} over $R$ is a finitely generated projective $B^{+}_{\rm cris}(R)$-module $N$ together with a $B^{+}_{\rm cris}(R)$-linear isomorphism \[F:N\otimes_{B^{+}_{\rm cris}(R),\phi}B^{+}_{\rm cris}(R)\to N\]We say that $N$ is \emph{integral} if there is a finitely generated projective $A_{\rm cris}(R)$-module $M$ such that $N=M[1/p]$ and $F(M\otimes_{A_{\rm cris}(R),\phi}A_{\rm cris}(R))\subset M$.
\end{enumerate}
By abuse of notation, we will denote by $F$ the  $\phi$-semilinear morphism sending $m$ to $F(m\otimes 1)$. We will also denote by $M^{\sigma}$ the base change $M\otimes_{A_{\rm cris}(R),\phi}A_{\rm cris}(R)$ or $M\otimes_{B^{+}_{\rm cris}(R),\phi}B^{+}_{\rm cris}(R)$.\xqed{\blacktriangle}
\end{dfn}

\begin{rem}
Note that since $A_{\rm cris}(R)$ is $p$-torsion-free,  $F:M\to M$ is injective. In the case of rational Dieudonn\'e modules, $F$ and $V$ (the linear versions) are even isomorphisms. Therefore, a rational Dieudonn\'e module is an isocrystal, by forgetting $V$.\xqed{\lozenge}
\end{rem}

\begin{ex}
Let $k$ be a perfect field of characteristic $p$ and $G$ a $p$-divisible group over $k$. Let $R$ be a semiperfect $k$-algebra, then the finite free $A_{\rm cris}(R)$-module $\BD(G)\otimes_{W(k)}A_{\rm cris}(R)$ has a natural Frobenius and Verschiebung (extending $F$ and $V$ of $\BD(G)$), making it a Dieudonn\'e module over $R$. The $\phi$-semilinear Frobenius 
\[F:\BD(G)\otimes_{W(k)}A_{\rm cris}(R)\to \BD(G)\otimes_{W(k)}A_{\rm cris}(R)\] is given by the formula $x\otimes a\mapsto F(x)\otimes \phi(a)$. \xqed{\blacksquare}
\end{ex}

\begin{dfn}
Let $R$ be a semiperfect ring of characteristic $p>0$ and let $G$ be a $p$-divisible group over $R$. The Dieudonn\'e module of $G$, denoted by $\BD(G)$ is the evaluation of the crystal of $G$ on the PD thickening $A_{\rm cris}(R)\to R$. This defines a functor from the category of $p$-divisible groups over $R$ to the category of Dieudonn\'e modules over $R$.\xqed{\blacktriangle}
\end{dfn}

\begin{dfn}
\label{DefGradedIsoCry}
Let $R$ be a semiperfect ring of characterisitc $p>0$. We denote by $\CP_{R}$ the graded $\BQ_{p}$-algebra \[\CP_{R}=\bigoplus_{d\geq 0}\big(B^{+}_{\rm cris}(R)\big)^{\phi=p^{d}}\]
Let $(N,F)$ be an isocrystal over $R$. We define the graded $\CP_{R}$-module
\[N_{\rm gr}:=\bigoplus_{d\geq 0}N^{F=p^{d+1}}\] and denote by $\CE_{N,F}$ (or $\CE_{N}$) the associated vector bundle over $\Proj\CP_{R}$. Note that the degree $d$ elements of $N_{\rm gr}$ are $N^{F=p^{d+1}}$.\xqed{\blacktriangle}
\end{dfn}

Let us fix a complete and algebraically closed extension $\mathbf{C}$ of $\BQ_{p}$. Recall (cf. \cite{courbe} Ch. 6, \S 6.1) that the Fargues-Fontaine curve is $X:=\Proj \CP_{\CO_{\mathbf{C}}/p}$. Throughout this paper, we will reserve the letter $X$ for this curve.\\

There is a natural morphism $\Theta:B^{+}_{\rm cris}(\CO_{\mathbf{C}}/p)\to \mathbf{C}$ (Fontaine's morphism), which defines a closed embedding $i_{\infty}:\{\infty\}\to X$. In fact, if $k$ is a perfect field of characteristic $p$ and $(R,R^{+})$ is a perfectoid affinoid $(W(k)[1/p],W(k))$-algebra, we have Fontaine's morphism \[\Theta:B^{+}_{\rm cris}(R^{+}/p)\to R\]

Let $(N,F)$ be an isocrystal over $\CO_{\mathbf{C}}/p$. We have a canonical isomorphism 
\begin{align}
\label{EqGlobSecVecBun}
\Gamma(X,\CE_{N})\cong N^{F=p}
\end{align}

\subsection{Universal cover of $p$-divisible groups}

\begin{dfn}
\label{DefTateModSh}
Let $S$ be a scheme and $G$ a $p$-divisible group over $S$. The $p$-adic Tate module of $G$ is the sheaf of $\BZ_{p}$-modules $T_p(G):=\invlim_{n}G[p^{n}]$.\xqed{\blacktriangle}
\end{dfn}

\begin{rem}
\label{RemTateModAltDef}
Note that we have a canonical isomorphism of $\BZ_{p}$-sheaves:
\begin{align}
\label{IsomTateModAltDef}
T_p(G)\cong\innHom_{S}(\BQ_{p}/\BZ_{p},G)
\end{align}\xqed{\lozenge}
\end{rem}

\begin{notation}
Let $R$ be a ring on which $p$ is topologically nilpotent, and denote by $\text{Nilp}_{R}^{\rm{op}}$ the category opposite of the category of $R$-algebras on which $p$ is nilpotent.\xqed{\star}
\end{notation}

\begin{dfn}
\label{DefAdGenFibFun}
Let $(R,R^{+})$ be a complete affinoid $\big(W(k)[1/p],W(k)\big)$-algebra. Define the \emph{adic generic fiber functor}
\[(\_)_{\eta}^{\rm ad}:(\text{Nilp}_{R^{+}}^{\rm op})^{\sim}\to (\text{CAff}_{(R,R^{+})}^{\rm op})^{\sim}\] by sending a sheaf $\CF$ to the sheafification of 
\[(S,S^{+})\mapsto \dirlim_{S_{0}\subset S^{+}}\invlim_{n}\CF(S_{0}/p^{n})\]where $\text{CAff}_{(R,R^{+})}$ is the category of complete affinoid $(R,R^{+})$-algebras and the direct limit runs over all open and bounded sub-$R^{+}$-algebras $S_{0}$ of $S^{+}$ (for a discussion on the topology of $(\text{Nilp}_{R^{+}}^{\rm op})^{\sim}$ see \cite[Ch. 2]{MR1393439} and for more details on the adic generic functor, see \cite[\S2.2]{MR3272049}).\xqed{\blacktriangle}
\end{dfn}

\begin{dfn}
Let $R$ be a ring on which $p$ is topologically nilpotent and let $G$ be a $p$-divisible group over $R$. We will consider $G$ as the sheaf on $\text{Nilp}_{R}^{\rm{op}}$, sending an $R$-algebra $S$ to $\dirlim G[p^{n}](S)$. The \emph{universal cover} of $G$, denoted by $\tilde{G}$ is the sheaf of $\BQ_{p}$-vector spaces on $\text{Nilp}_{R}^{\rm{op}}$ that sends $S$ to \[\invlim_{p\cdot}G(S)\] where the transition morphisms are the multiplication-by-$p$ morphism (cf. \cite{courbe}, Ch. 4, D\'efinition 4.5.1., or \cite{MR3272049}, \S3). We extend this functor to $R$-algebras on which $p$ is topologically nilpotent, by sending such an $R$-algebra $S$ to the limit
\[\invlim_{n}\tilde{G}(S/p^{n})\]\xqed{\blacktriangle}
\end{dfn}

Let us list some properties of the universal cover that we will use throughout the paper:

\begin{prop}
\label{PropPropUnivBTGrp}
Let $R$ be a ring on which $p$ is topologically nilpotent and fix a $p$-divisible group $G$ over $R$. We denote by $\BD(G)$ the Dieudonn\'e module of $G$ over $R$ (when $R$ is semiperfect).
\begin{enumerate}[(1)]
\item if $p$ is nilpotent in $R$, then we have a canonical isomorphism 
\begin{align}
\tilde{G}(R)\cong\Hom_{R}(\BQ_{p}/\BZ_{p},G)[1/p]
\end{align}
\item if $R$ is $p$-adically complete, then we have a canonical isomorphism 
\begin{align}
\tilde{G}(R)\cong\tilde{G}(R/p)\cong \Hom_{R/p}(\BQ_{p}/\BZ_{p},G)[1/p]
\end{align}
\item
there is a canonical isomorphism of $\BQ_{p}$-sheaves \[\tilde{G}\cong T_{p}(G)[1/p]\]where again $T_p(G)$ is the $p$-adic Tate module of $G$, seen as a sheaf.
\item
assume that $R$ is an $f$-semiperfect ring (meaning that the tilt $R^{\flat}$ is $f$-adic), then we have a canonical isomorphism
\begin{align}
\label{GlobSecUnivCov}
\tilde{G}(R)\cong \big(\BD(G)[1/p]\big)^{F=p}
\end{align}
\item
let $R$ be a perfect field. Then, the universal cover $\tilde{G}$ lifts uniquely to $W(R)$ and for any perfectoid affinoid $(W(R)[1/p],W(R))$-algebra $(S,S^{+})$, we have a canonical isomorphism
\begin{align}
\label{AdGenFibGlobSec}
\tilde{G}_{\eta}^{\rm ad}(S,S^{+})\cong \big(\BD(G)\otimes_{W(k)}B^{+}_{\rm cris}(S^{+}/p)\big)^{F=p}
\end{align}
\end{enumerate}
\end{prop}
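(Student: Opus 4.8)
The plan is to prove the five assertions in order, deriving the elementary sheaf-theoretic statements (1)--(3) by hand and reducing the two $p$-adic Hodge theoretic statements (4)--(5) to the crystalline comparison for universal covers established in \cite{MR3272049} and \cite{courbe}.

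First I would prove (3), from which (1) drops out by taking sections. Using the identification $T_{p}(G)\cong\innHom_{S}(\BQ_{p}/\BZ_{p},G)$ of Remark \ref{RemTateModAltDef}, I construct the natural monomorphism of fppf sheaves $T_{p}(G)\to\tilde{G}$ that regards a compatible system $(y_{n})_{n}$ with $py_{n+1}=y_{n}$ as an element of $\invlim_{p\cdot}G$. The heart of the matter is the short exact sequence
\[0\to T_{p}(G)\to\tilde{G}\xrightarrow{(x_{n})_{n}\mapsto x_{0}}G\to0,\]
whose surjectivity comes from the fact that multiplication by $p$ is an epimorphism of fppf sheaves on a $p$-divisible group, and whose kernel is identified with $T_{p}(G)$ by a direct reindexing of the defining system. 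Since multiplication by $p$ acts invertibly on $\tilde{G}$ (it is the shift of the sequence), $\tilde{G}$ is a sheaf of $\BQ_{p}$-vector spaces with $\tilde{G}=\tilde{G}[1/p]$, while $G=\dirlim_{n}G[p^{n}]$ is $p$-power torsion and hence $G[1/p]=0$. Inverting $p$ in the exact sequence then yields the isomorphism $T_{p}(G)[1/p]\xrightarrow{\sim}\tilde{G}$ of (3). Evaluating this sheaf isomorphism on $R$ with $p$ nilpotent and commuting sections with the filtered colimit defining $[1/p]$ (legitimate over the quasi-compact affine base) gives (1).

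For (2) I would exploit the rigidity of the universal cover. Extending $\tilde{G}$ to the $p$-adically complete ring $R$ by $\tilde{G}(R)=\invlim_{n}\tilde{G}(R/p^{n})$, I use that $\tilde{G}(S)\cong\tilde{G}(S/I)$ for every nilpotent ideal $I$, since on the infinitesimal part of $G$ multiplication by $p$ is topologically nilpotent and is therefore killed in the limit $\invlim_{p\cdot}$ (this is the key rigidity property of universal covers, \cite[\S3]{MR3272049}). Applying this to the nilpotent ideals $(p)/(p^{n})\subset R/p^{n}$ makes the tower $\big(\tilde{G}(R/p^{n})\big)_{n}$ constant equal to $\tilde{G}(R/p)$, so $\tilde{G}(R)\cong\tilde{G}(R/p)$, and (1) applied to $R/p$ completes the identification with $\Hom_{R/p}(\BQ_{p}/\BZ_{p},G)[1/p]$.

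The main obstacle is (4), the crystalline comparison $\tilde{G}(R)\cong\big(\BD(G)[1/p]\big)^{F=p}$ for $f$-semiperfect $R$, which is where the genuine input from Dieudonn\'e theory enters. Here I would combine (2) with the description of $\innHom(\BQ_{p}/\BZ_{p},G)\otimes\BQ_{p}$ by the evaluation of the crystal of $G$ on $A_{\rm cris}(R)$: a quasi-isogeny $\BQ_{p}/\BZ_{p}\to G$ is matched, via the crystalline period (quasi-logarithm) map, with an element of the isocrystal fixed by $F$ up to the scalar $p$, so that $\tilde{G}(R)\cong\big(\BD(G)[1/p]\big)^{F=p}$. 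Two points require care: the normalization $F=p$ (rather than $F=1$) is forced by the grading $N^{F=p^{d+1}}$ fixed in Definition \ref{DefGradedIsoCry} and is consistent with $\Gamma(X,\CE_{N})\cong N^{F=p}$, and the $f$-semiperfectness hypothesis is exactly what upgrades the period map from an injection to an isomorphism. Finally (5) is the globalization of (4): over a perfect field $R$ the functor $\tilde{G}$ lifts canonically to $W(R)$ because, by the rigidity used in (2), it depends only on $G$ over $R$; for a perfectoid affinoid $(S,S^{+})$ the ring $S^{+}/p$ is $f$-semiperfect, and unwinding the adic generic fiber functor of Definition \ref{DefAdGenFibFun} gives $\tilde{G}_{\eta}^{\rm ad}(S,S^{+})\cong\tilde{G}(S^{+})\cong\tilde{G}(S^{+}/p)$ by (2); applying (4) over $S^{+}/p$ together with the fact that $\BD(G)\otimes_{W(k)}A_{\rm cris}(S^{+}/p)$ is the Dieudonn\'e module base-changed from $k$ then produces the isomorphism (\ref{AdGenFibGlobSec}).
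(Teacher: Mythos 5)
Your proposal is correct in substance, but it takes a genuinely different route from the paper: the paper offers no argument at all for this proposition, simply citing \cite[\S 3 and \S 5]{MR3272049}. You instead prove the formal parts by hand --- (3) via the kernel sequence $0\to T_p(G)\to\tilde{G}\to G$, (1) by taking sections over the affine base (correctly noting that sections over a quasi-compact object commute with the filtered colimit defining $[1/p]$), (2) via the rigidity of $\tilde{G}$ under nilpotent thickenings applied to the tower $R/p^{n}\onto R/p$, and (5) as a formal consequence of (4) plus the fact that $S^{+}/p$ is $f$-semiperfect and that $S^{+}$ is itself a final object among the open bounded subrings in Definition \ref{DefAdGenFibFun}. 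The only place where genuine Dieudonn\'e-theoretic input enters is (4), and there you do exactly what the paper does, namely invoke the crystalline comparison of Scholze--Weinstein (their Theorem 4.1.4). The net effect is a more self-contained proof that isolates precisely which part of the proposition is hard; this is a useful decomposition, and your consistency checks (e.g.\ that the normalization $F=p$ matches $\Gamma(X,\CE_{N})\cong N^{F=p}$) are sound, although the normalization is really dictated by the covariant Dieudonn\'e theory of \cite{MR3272049} rather than by Definition \ref{DefGradedIsoCry}, which is chosen to match it.

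One justification in your argument is inaccurate, though harmlessly so. You assert that $\tilde{G}\to G$, $(x_{n})_{n}\mapsto x_{0}$, is an epimorphism of fppf sheaves ``since multiplication by $p$ is an epimorphism of fppf sheaves.'' An inverse limit of epimorphisms of sheaves need not be an epimorphism: lifting a section $x_{0}$ through all stages requires a tower of $G[p]$-torsors, whose inverse limit is a flat affine cover that is not of finite presentation, so the sequence $0\to T_p(G)\to\tilde{G}\to G\to 0$ is naturally an fpqc (or pro-fppf) statement, not an fppf one. Fortunately your conclusion does not need surjectivity at all: since localization at $p$ is exact, left-exactness of $0\to T_p(G)\to\tilde{G}\to G$ together with $G[1/p]=0$ and the invertibility of $p$ on $\tilde{G}$ already forces $T_p(G)[1/p]=\kernel\big(\tilde{G}\to G[1/p]\big)=\tilde{G}$. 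Equivalently, and even more directly: any section $(x_{n})_{n}$ of $\tilde{G}$ over a quasi-compact base has $x_{0}$ lying locally in some $G[p^{N}]$, whence $p^{N}\cdot(x_{n})_{n}\in T_p(G)$, so every section of $\tilde{G}$ is locally a $p$-power multiple of a section of $T_p(G)$. You should replace the surjectivity claim by either of these observations; with that repair the proof stands.
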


\begin{proof}
See \cite[\S3 and \S5]{MR3272049}.
\end{proof}

\begin{rem}
Note that when $p$ is merely topologically nilpotent, we extend the definitions of $\tilde{G}$ and $T_{p}(G)$ by taking inverse limits over truncations by powers of $p$. So, in the isomorphism of part (3) of the Proposition, the localization at $1/p$ is \emph{before} taking inverse limits. In other words, for a ring $R$ in which $p$ is topologically nilpotent (and not nilpotent), we have 
\begin{align}
\label{EqIsomTildGTate}
\tilde{G}(R)\cong \invlim_{n}\tilde{G}(R/p^{n})\cong \invlim_{n}\big(T_{p}(G)(R/p^{n})[1/p]\big)\cong \big(T_{p}G[1/p]\big)(R)
\end{align}
whereas, 
\[\big(T_{p}(G)(R)\big)[1/p]\cong \big(\invlim_{n}T_{p}(G)(R/p^{n})\big)[1/p]\]
and these two inverse limits are not isomorphic.
Isomorphism (\ref{EqIsomTildGTate}) yields a canonical embedding of $\BZ_{p}$-sheaves:
\begin{align}
T_p(G)\into\tilde{G}
\end{align}\xqed{\lozenge}
\end{rem}

\begin{rem}
As we said in part (5) of Proposition \ref{PropPropUnivBTGrp}, when $R$ is a perfect field, the universal cover $\tilde{G}$ of $G$ lifts uniquely to $W(R)$, and we will denote the lift by $\tilde{G}$ as well.\xqed{\lozenge}
\end{rem}

\begin{lem}
\label{LemGlobSecVectUnivCov}
Let $k$ be a perfect field of characteristic $p>0$ and $G$ a $p$-divisible group over $k$. Let $\tilde{G}$ be the unique lift of the universal cover of $G$ to $W(k)$, and $\CE_{G}$ the vector bundle over the Fargues-Fontaine curve $X$ associated with $G$, i.e., $\CE_{\BD(G)\otimes_{W(k)}B^{+}_{\rm cris}(\CO_{\mathbf{C}}/p)}$. Then, for any natural number $n$, we have a canonical bijection:
\begin{align}
\label{EqGlobSecVectBunUnivCov}
\tilde{G}^{\rm ad}_{\eta}(\mathbf{C},\CO_{\mathbf{C}})^{\times n}\cong \Hom_{X}(\CO_{X}^{\oplus n},\CE_{G})
\end{align}
\end{lem}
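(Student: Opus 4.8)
We want to prove Lemma \ref{LemGlobSecVectUnivCov}: for a $p$-divisible group $G$ over a perfect field $k$ of characteristic $p$, with $\tilde{G}$ the lift of the universal cover to $W(k)$ and $\CE_G$ the associated vector bundle on the Fargues-Fontaine curve $X = \Proj \CP_{\CO_{\mathbf{C}}/p}$, there's a canonical bijection
$$\tilde{G}^{\rm ad}_{\eta}(\mathbf{C},\CO_{\mathbf{C}})^{\times n}\cong \Hom_{X}(\CO_{X}^{\oplus n},\CE_{G}).$$

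**What I have available.** The key tools are:
- Proposition \ref{PropPropUnivBTGrp}(5): $\tilde{G}_{\eta}^{\rm ad}(S,S^+) \cong (\BD(G)\otimes_{W(k)} B^+_{\rm cris}(S^+/p))^{F=p}$ for perfectoid affinoid $(S,S^+)$.
- Equation \eqref{EqGlobSecVecBun}: $\Gamma(X, \CE_N) \cong N^{F=p}$ for an isocrystal $N$ over $\CO_{\mathbf{C}}/p$.
- The definition of $\CE_G = \CE_{\BD(G)\otimes_{W(k)} B^+_{\rm cris}(\CO_{\mathbf{C}}/p)}$.

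**The plan.** For $n=1$ first, I would chain together these isomorphisms. Take $(S,S^+) = (\mathbf{C}, \CO_{\mathbf{C}})$, which is perfectoid affinoid over $(W(k)[1/p], W(k))$. Then by Proposition \ref{PropPropUnivBTGrp}(5),
$$\tilde{G}^{\rm ad}_{\eta}(\mathbf{C}, \CO_{\mathbf{C}}) \cong \big(\BD(G)\otimes_{W(k)} B^+_{\rm cris}(\CO_{\mathbf{C}}/p)\big)^{F=p}.$$
The isocrystal $N := \BD(G)\otimes_{W(k)} B^+_{\rm cris}(\CO_{\mathbf{C}}/p)$ over $\CO_{\mathbf{C}}/p$ is exactly the one defining $\CE_G$, so by \eqref{EqGlobSecVecBun},
$$N^{F=p} \cong \Gamma(X, \CE_G).$$
Finally, $\Gamma(X, \CE_G) = \Hom_X(\CO_X, \CE_G)$ by the adjunction/universal property of the structure sheaf (global sections = maps from the unit). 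Composing gives the $n=1$ case.

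**The $n$-fold version.** For general $n$, I would use the additive structure: $\Hom_X(\CO_X^{\oplus n}, \CE_G) \cong \Hom_X(\CO_X, \CE_G)^{\times n} = \Gamma(X,\CE_G)^{\times n}$, since $\Hom$ out of a finite direct sum is the product of the individual $\Hom$s. Matching this against $\tilde{G}^{\rm ad}_{\eta}(\mathbf{C},\CO_{\mathbf{C}})^{\times n}$ via the $n=1$ bijection componentwise yields the claim. So the $n$-fold statement reduces immediately to $n=1$ plus this standard Hom-out-of-direct-sum decomposition.

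**The main subtlety.** The genuinely substantive point is the identification of the isocrystal: that $\tilde{G}^{\rm ad}_{\eta}$ evaluated at $(\mathbf{C},\CO_{\mathbf{C}})$, which a priori is $(\BD(G)\otimes B^+_{\rm cris})^{F=p}$, is being compared with global sections of a vector bundle built from the \emph{same} $F$-isocrystal. One must check that the $F$ appearing in Proposition \ref{PropPropUnivBTGrp}(5) (the $\phi$-semilinear Frobenius on $\BD(G)\otimes_{W(k)} B^+_{\rm cris}(\CO_{\mathbf{C}}/p)$, given on pure tensors by $x\otimes a \mapsto F(x)\otimes\phi(a)$, as in the Example) is literally the $F$ used in Definition \ref{DefGradedIsoCry} to form $N_{\rm gr} = \bigoplus_d N^{F=p^{d+1}}$ and hence $\CE_N$. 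Both refer to the crystalline Frobenius coming from the Dieudonné module of $G$ base-changed along $\phi$, so they agree; but one should state explicitly that the $F=p$ eigenspace in \eqref{AdGenFibGlobSec} is the degree-$0$ graded piece $N^{F=p}$ computing $\Gamma(X,\CE_N)$ in \eqref{EqGlobSecVecBun}. Once that matching of Frobenii is pinned down, all three isomorphisms are canonical and functorial, so their composite is a canonical bijection, and the proof is complete.
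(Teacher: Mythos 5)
Your proposal is correct and follows essentially the same route as the paper: the paper's proof likewise combines the isomorphism $\tilde{G}^{\rm ad}_{\eta}(\mathbf{C},\CO_{\mathbf{C}})\cong\big(\BD(G)\otimes_{W(k)}B^{+}_{\rm cris}(\CO_{\mathbf{C}}/p)\big)^{F=p}$ from Proposition \ref{PropPropUnivBTGrp}(5) with the identification $\Gamma(X,\CE_{N})\cong N^{F=p}$ of (\ref{EqGlobSecVecBun}) and the observation $\Hom_{X}(\CO_{X},\CE_{G})\cong\Gamma(X,\CE_{G})$. Your additional remarks on the $n$-fold direct-sum decomposition and on matching the two Frobenii are sound elaborations of details the paper leaves implicit.
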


\begin{proof}
This following by combining isomorphisms (\ref{EqGlobSecVecBun}) and (\ref{AdGenFibGlobSec}), and observing that we have \[\Hom_{X}(\CO_{X},\CE_{G})\cong \Gamma(X,\CE_{G}).\]
\end{proof}

\begin{lem}
\label{LemCommDiagQLogTheta}
Let $G$ be a $p$-divisible group over a perfect field $k$ of characteristic $p$, and $(R,R^{+})$ a perfectoid affinoid $\big(W(k)[1/p],W(k)\big)$-algebra. The following diagram is commutative

\[\xymatrix{\tilde{G}_{\eta}^{\rm ad}(R,R^{+})\ar[d]_{\rm qlog}\ar[r]^{\cong\qquad\quad}&\big(\BD(G)\otimes_{W(k)}B^{+}_{\rm cris}(R^{+}/p)\big)^{F=p}\ar[dl]^{\Theta}\\
\BD(G)\otimes_{W(k)}R&}\]where $\qlog:\tilde{G}_{\eta}^{\ad}\to \BD(H)\otimes\BG_{a}$ is the quasi-logarithm (see \cite[\S3.2 and \S6.3]{MR3272049}).
\end{lem}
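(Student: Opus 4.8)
The plan is to descend the entire triangle to the semiperfect ring $R^{+}/p$ and then to recognize both slanted arrows as two evaluations of one and the same Dieudonn\'e crystal, related by the transition map $\Theta$. First I would unwind the adic generic fiber: by Proposition \ref{PropPropUnivBTGrp}(2) and the construction of $(\_)^{\rm ad}_{\eta}$, evaluation at $(R,R^{+})$ identifies $\tilde{G}^{\rm ad}_{\eta}(R,R^{+})$ with $\tilde{G}(R^{+})\cong\tilde{G}(R^{+}/p)$, and under this identification the horizontal arrow is exactly the crystalline period isomorphism (\ref{AdGenFibGlobSec}), namely the map obtained by evaluating the crystal $\BD(G)$ on the universal $p$-adically complete PD thickening $A_{\rm cris}(R^{+}/p)\onto R^{+}/p$ and extracting the $F=p$ part after inverting $p$. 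Since $\tilde{G}$ depends only on the reduction mod $p$ and is a sheaf of $\BQ_{p}$-vector spaces, the quasi-logarithm likewise factors through $\tilde{G}(R^{+}/p)$ and takes values in $\BD(G)\otimes_{W(k)}R$. Thus it suffices to prove commutativity of the resulting triangle of maps out of $\tilde{G}(R^{+}/p)$, a triangle that is moreover functorial in $R^{+}/p$.

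Next I would recall the two constructions from \cite{MR3272049} and present them as evaluations of the same crystal. The quasi-logarithm of \cite[\S3.2]{MR3272049} is built from the universal vector extension $EG$ of a lift of $G$ to $R^{+}$: one lifts the projection $\tilde{G}\to G$ to the unique $\BQ_{p}$-linear map $\tilde{G}\to EG$ and composes with $\log_{EG}\colon EG\to\Lie EG$, inverting $p$ at the end. By Messing theory $\Lie EG\cong \BD(G)\otimes_{W(k)}R^{+}$ is precisely the evaluation of the Dieudonn\'e crystal of $G$ on the PD thickening $R^{+}\onto R^{+}/p$, whose kernel $pR^{+}$ carries divided powers because $R^{+}$ is perfectoid. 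The horizontal isomorphism, on the other hand, is the evaluation of the \emph{same} crystal on $A_{\rm cris}(R^{+}/p)\onto R^{+}/p$.

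The key step, which I expect to be the main obstacle, is to identify $\Theta$ as the transition map relating these two evaluations. By the universal property of $A_{\rm cris}(R^{+}/p)$ as the initial $p$-adically complete PD thickening of $R^{+}/p$, there is a unique PD morphism $A_{\rm cris}(R^{+}/p)\to R^{+}$ lying over $R^{+}/p$, and one checks that after inverting $p$ it is Fontaine's morphism $\Theta\colon B^{+}_{\rm cris}(R^{+}/p)\to R$. The crystal property then forces the induced map $\BD(G)\otimes_{W(k)}A_{\rm cris}(R^{+}/p)\to\BD(G)\otimes_{W(k)}R^{+}$ to carry the evaluation over $A_{\rm cris}(R^{+}/p)$ to the evaluation over $R^{+}$, compatibly with the class attached to a point $x\in\tilde{G}(R^{+}/p)$. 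Tracing $x$ through both routes, its crystalline period in $\big(\BD(G)\otimes B^{+}_{\rm cris}(R^{+}/p)\big)^{F=p}$ maps under $\Theta$ to the $R^{+}$-evaluation of the crystalline class of $x$, which is exactly $\qlog(x)$. The delicate comparison is to verify that the $\BQ_{p}$-linear lift $\tilde{G}\to EG$ defining $\qlog$ is the same trivialization of the crystalline class that $\Theta$ induces, i.e. that the normalizations of the period used in \cite[\S3.2, \S6.3]{MR3272049} and in the period map of \cite[\S5]{MR3272049} agree; once this is matched, the triangle commutes.
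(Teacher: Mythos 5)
Your proposal is correct and is essentially the paper's proof in expanded form: the paper disposes of the lemma by citing \cite[Lemma 3.5.1 and Theorem 4.1.4]{MR3272049}, and your argument reconstructs exactly those two ingredients---Theorem 4.1.4 being the identification of the horizontal arrow with the crystalline period map, and Lemma 3.5.1 being the compatibility of the quasi-logarithm with the unique PD morphism $A_{\rm cris}(R^{+}/p)\to R^{+}$ lying over $R^{+}/p$, which is Fontaine's $\Theta$ after inverting $p$. The ``delicate comparison'' of normalizations that you defer at the end is precisely the content of that cited Lemma 3.5.1, so nothing in your outline diverges from the paper's route.
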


\begin{proof}
This follows from Lemma 3.5.1 and Theorem 4.1.4. in \cite{MR3272049}.
\end{proof}

\section{Multilinear Theory}

\subsection{Multilinear morphisms of graded modules}

In this subsection, we define multilinear morphisms of graded modules over graded rings and show that they induce multilinear morphisms between their associated quasi-coherent sheaves over the $\Proj$.\\

\begin{notation}
\label{NotationLambdaMap}
Let $\Sigma$ and $\Delta$ be sets and $\varrho:\Sigma^{\times r}\to \Delta$ a map. Let $h\geq r$. We denote by $\Lambda_{r}\varrho$ (or even $\Lambda_{r}$ if $\varrho$ is understood from the context), the following map:
\begin{align*}
\Lambda_{r}\varrho:\Sigma^{\times h}&\to \Delta^{\times \binom{h}{r}}\\
(x_{1},\dots,x_{h})&\mapsto \big(\varrho(x_{i_{1}},\dots,x_{i_{r}})\big)_{1\leq i_{1}<\dots<i_{r}\leq h}
\end{align*}
Now, if $\mathscr{C}$ is a category, $\Sigma,\Delta:\mathscr{C}\to \Ens$ are functors and $\varrho:\Sigma^{\times r}\to \Delta$ is a natural trasnformation, we can define the natural transformation $\Lambda_{r}\varrho$ in the same fashion.\xqed{\star}
\end{notation}

\begin{dfn}
\label{DefGradMultMor}
Let $S=\bigoplus_{d\geq 0}S_{d}$ be a graded ring and $M, M_{1},\dots,M_{r},N$ graded $S$-modules. A \emph{graded multilinear morphism} $$\tau:M_{1}\times\dots\times M_{r}\to N$$ is a multilinear morphism of  $S_{0}$-modules such that for all $d_{1},\dots,d_{r}\geq 0$, we have \[\tau(M_{d_{1}}\times\dots\times M_{d_{r}})\subset N_{d_{1}+\dots+d_{r}}\]We denote by $\Mult_{\rm gr}(M_{1}\times\dots\times M_{r},N)$ the abelian group of all such multilinear morphisms. We denote by $\Alt_{\rm gr}(M^{\times r},N)$ (and respectively $\Sym_{\rm gr}(M^{\times r},N)$) the subgroup of $\Mult_{\rm gr}(M^{\times r},N)$ consisting of alternating (respectively symmetric) elements. When $r=1$, we obtain the usual notion of graded morphism of graded modules.\xqed{\blacktriangle}
\end{dfn}

\begin{dfn}
Let $S=\bigoplus_{d\geq 0}S_{d}$ be a graded ring and $M, N$ graded $S$-modules. For $i\geq 0$, we denote by $i$-$\Hom(M,N)$ the group $\Hom_{\rm gr}(M,N[i])$. We call elements of this group \emph{$i$-grarded morphisms}. Using notations in \cite{MR0379473}, we denote the graded $S$-module $\oplus_{i\geq 0}\, i$-$\Hom(M,N)$ by $*\Hom_{S}(M,N)$. \xqed{\blacktriangle}
\end{dfn}

\begin{lem}
\label{LemAdjGradMod}
Let $S=\bigoplus_{d\geq0}S_{d}$ be a graded ring and $M_{1},\dots,M_{r},N$ graded $S$-modules. Under the canonical isomorphism 
\begin{align}
\label{EqGenAdj}
\Theta:\Mult_{S}(M_{1}\times\dots\times M_{r},N)\cong\Mult_{S}(M_{1}\times\dots\times M_{r-1},\Hom(M_{r},N))
\end{align}
The subgroup $\Mult_{\rm gr}(M_{1}\times\dots\times M_{r},N)$ is mapped onto the subgroup $\Mult_{\rm gr}(M_{1} \times \dots \times M_{r-1},*\Hom_{S}(M_{r},N))$.
\end{lem}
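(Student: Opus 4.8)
The plan is to render the adjunction $\Theta$ completely explicit and then simply keep track of degrees. Concretely, $\Theta$ sends a multilinear morphism $\tau$ to the morphism $\hat\tau:=\Theta(\tau)$ characterized by $\hat\tau(m_{1},\dots,m_{r-1})(m_{r})=\tau(m_{1},\dots,m_{r-1},m_{r})$, and on the ambient groups $\Theta$ is already known to be a bijection. Hence it suffices to establish the two inclusions $\Theta\big(\Mult_{\rm gr}(M_{1}\times\dots\times M_{r},N)\big)\subseteq\Mult_{\rm gr}\big(M_{1}\times\dots\times M_{r-1},*\Hom_{S}(M_{r},N)\big)$ and $\Theta^{-1}\big(\Mult_{\rm gr}(\dots,*\Hom_{S}(M_{r},N))\big)\subseteq\Mult_{\rm gr}(\dots,N)$; combined with the bijectivity of $\Theta$ these give the asserted equality of subgroups, that is, the ``onto'' statement. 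Since every morphism in sight is additive and the $M_{j}$ are spanned by their homogeneous parts, in both inclusions I only need to test the grading condition on tuples of homogeneous elements, and the linearity over $S_{0}$ of all adjoints is inherited automatically from $\Theta$.

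First I would treat the forward inclusion. Fix $\tau\in\Mult_{\rm gr}(M_{1}\times\dots\times M_{r},N)$ and homogeneous $m_{j}\in (M_{j})_{d_{j}}$ for $1\le j\le r-1$; put $i:=d_{1}+\dots+d_{r-1}$ and $\phi:=\hat\tau(m_{1},\dots,m_{r-1})$. For homogeneous $m_{r}\in (M_{r})_{e}$ the grading condition defining $\Mult_{\rm gr}$ gives $\phi(m_{r})=\tau(m_{1},\dots,m_{r-1},m_{r})\in N_{i+e}$, so $\phi$ raises degrees by $i$; that is, $\phi\in i\text{-}\Hom(M_{r},N)=\Hom_{\rm gr}(M_{r},N[i])$, which is exactly the degree-$i$ component of $*\Hom_{S}(M_{r},N)$. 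Therefore $\hat\tau$ carries $(M_{1})_{d_{1}}\times\dots\times(M_{r-1})_{d_{r-1}}$ into $\big(*\Hom_{S}(M_{r},N)\big)_{d_{1}+\dots+d_{r-1}}$, which is precisely the condition that $\hat\tau$ belongs to $\Mult_{\rm gr}\big(M_{1}\times\dots\times M_{r-1},*\Hom_{S}(M_{r},N)\big)$ (for general inputs $\hat\tau(m_{1},\dots,m_{r-1})$ is a finite sum of such homogeneous pieces, hence genuinely lands in $*\Hom_{S}(M_{r},N)$).

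For the reverse inclusion I would run the same computation backwards. Starting from $\hat\tau\in\Mult_{\rm gr}\big(M_{1}\times\dots\times M_{r-1},*\Hom_{S}(M_{r},N)\big)$ and setting $\tau:=\Theta^{-1}(\hat\tau)$, homogeneity of $\hat\tau$ says that for homogeneous $m_{j}\in(M_{j})_{d_{j}}$ the morphism $\hat\tau(m_{1},\dots,m_{r-1})$ lies in the degree-$i$ part $i\text{-}\Hom(M_{r},N)$ with $i=d_{1}+\dots+d_{r-1}$, hence raises degrees by $i$. Evaluating on a homogeneous $m_{r}\in(M_{r})_{d_{r}}$ yields $\tau(m_{1},\dots,m_{r})=\hat\tau(m_{1},\dots,m_{r-1})(m_{r})\in N_{i+d_{r}}=N_{d_{1}+\dots+d_{r}}$, so $\tau$ respects the grading and lies in $\Mult_{\rm gr}(M_{1}\times\dots\times M_{r},N)$. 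I expect no genuine obstacle here: the whole argument is degree bookkeeping, and the single point demanding attention is to fix the shift convention for $N[i]$ so that the degree-$i$ component of $*\Hom_{S}(M_{r},N)$ really is identified with the degree-raising maps $i\text{-}\Hom(M_{r},N)$; once that identification is pinned down, both inclusions are immediate.
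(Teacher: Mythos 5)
Your proof is correct and follows essentially the same route as the paper: both make the adjunction $\Theta$ explicit via $\hat\tau(m_{1},\dots,m_{r-1})(m_{r})=\tau(m_{1},\dots,m_{r})$ and then verify the grading condition by degree bookkeeping on homogeneous elements, identifying the degree-$(d_{1}+\dots+d_{r-1})$ component of $*\Hom_{S}(M_{r},N)$ with the degree-raising maps. The only difference is organizational — the paper runs a single chain of ``if and only if'' statements where you prove the two inclusions separately (and you are slightly more careful in noting that additivity reduces everything to homogeneous tuples, so the adjoint genuinely lands in $*\Hom_{S}(M_{r},N)$ rather than just $\Hom_{S}(M_{r},N)$).
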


\begin{proof}
Recall that the isomorphism (\ref{EqGenAdj}) sends $\phi$ to 
\[\Theta(\phi):(m_{1},\dots,m_{r-1})\mapsto [m_{r}\mapsto \phi(m_{1},\dots,m_{r})]\]
Take $\phi\in \Mult_{S}(M_{1}\times\dots\times M_{r},N)$, then $\phi$ is graded if and only if $$\phi(M_{d_{1}},\dots,M_{d_{r}})\subset N_{d_{1}+\dots+d_{r}}$$ if and only if $$\Theta(\phi)(M_{d_{1}},\dots,M_{d_{r-1}})(M_{d_{r}})\subset N_{d_{1}+\dots+d_{r}}$$ if and only if $$\Theta(\phi)(M_{d_{1}},\dots,M_{d_{r-1}})\subset (d_{1}+\dots+d_{r-1})\text{-}\Hom(M_{r},N)$$ if and only if $\Theta(\phi)$ is graded.
\end{proof}

\begin{lem}
\label{LemTildeGraded}
Let $S=\bigoplus_{d\geq0}S_{d}$ be a graded ring and $M, N$ graded $S$-modules. Set $Y:=\Proj S$. For a graded $S$-module $P$, we denote by $\tilde{P}$ the $\CO_{Y}$-module associated with $P$. There is a canonical and functorial morphism of $\CO_{Y}$-modules
\begin{align}
\label{EqTildeGraded}
\big(*\Hom_{S}(M,N)\big)^{\widetilde{}}\to \innHom_{\CO_{Y}}(\tilde{M},\tilde{N})
\end{align}
\end{lem}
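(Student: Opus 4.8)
The plan is to construct the morphism (\ref{EqTildeGraded}) by working on a basic affine open cover of $Y=\Proj S$ and checking that the resulting local morphisms glue to a global morphism of $\CO_{Y}$-modules. Recall that $Y$ is covered by the opens $D_{+}(f)$ for $f\in S_{e}$ homogeneous of positive degree $e$, that $D_{+}(f)\cong \Spec S_{(f)}$ where $S_{(f)}$ is the degree-$0$ part of the localization $S_{f}$, and that for any graded $S$-module $P$ one has $\tilde{P}|_{D_{+}(f)}\cong \widetilde{P_{(f)}}$, the quasi-coherent sheaf on $\Spec S_{(f)}$ attached to $P_{(f)}$. In particular $\tilde{M}|_{D_{+}(f)}\cong\widetilde{M_{(f)}}$, $\tilde{N}|_{D_{+}(f)}\cong\widetilde{N_{(f)}}$, and $\big(*\Hom_{S}(M,N)\big)^{\widetilde{}}|_{D_{+}(f)}\cong\big((*\Hom_{S}(M,N))_{(f)}\big)^{\widetilde{}}$.

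First I would construct, for each such $f$, a natural $S_{(f)}$-linear map
\[
\big(*\Hom_{S}(M,N)\big)_{(f)}\longrightarrow \Hom_{S_{(f)}}\big(M_{(f)},N_{(f)}\big).
\]
A degree-$0$ element of $\big(*\Hom_{S}(M,N)\big)_{f}$ is represented by a fraction $\psi/f^{n}$ with $\psi\in\Hom_{\rm gr}(M,N[ne])$, i.e. a graded morphism raising degrees by $ne$; to it I associate the $S_{(f)}$-linear map sending $m/f^{k}\in M_{(f)}$ (with $m\in M_{ke}$) to $\psi(m)/f^{n+k}\in N_{(f)}$, which lies in $N_{(f)}$ since $\psi(m)\in N_{(n+k)e}$. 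One checks directly that this is independent of the chosen representative and is $S_{(f)}$-linear, so the displayed map is well defined.

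Next, on the affine scheme $D_{+}(f)=\Spec S_{(f)}$ there is the canonical comparison morphism $\widetilde{\Hom_{S_{(f)}}(M_{(f)},N_{(f)})}\to \innHom_{\CO_{D_{+}(f)}}(\widetilde{M_{(f)}},\widetilde{N_{(f)}})$ of quasi-coherent sheaves. Applying the $\widetilde{(\cdot)}$ functor to the module map above and composing with this comparison morphism yields a morphism $\big(*\Hom_{S}(M,N)\big)^{\widetilde{}}|_{D_{+}(f)}\to \innHom_{\CO_{Y}}(\tilde{M},\tilde{N})|_{D_{+}(f)}$, using that $\innHom$ commutes with restriction to opens. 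Because the fraction formula $m/f^{k}\mapsto\psi(m)/f^{n+k}$ is manifestly compatible with further localization (passing from $D_{+}(f)$ to $D_{+}(fg)$), and the affine comparison morphisms are likewise compatible with localization, the local morphisms agree on the overlaps $D_{+}(f)\cap D_{+}(g)=D_{+}(fg)$ and glue to the desired global morphism (\ref{EqTildeGraded}). Functoriality in $M$ and $N$ is immediate, as every step is natural in the two modules.

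I expect the only genuine point requiring care to be the gluing step: verifying that the locally defined maps, each built from the comparison morphism of an affine chart, agree on the overlaps $D_{+}(fg)$. This reduces to the compatibility of the affine comparison maps $\widetilde{\Hom}\to\innHom$ under the localizations $S_{(f)}\to S_{(fg)}$, together with the compatibility of the fraction formula under the same localizations; both are routine but must be checked to guarantee coordinate-independence. Everything else—well-definedness and $S_{(f)}$-linearity of the fraction map, and the existence of the affine comparison morphism—is standard.
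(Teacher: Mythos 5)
Your proposal is correct and follows essentially the same route as the paper's proof: both work on the standard opens $D_{+}(f)$, use $\tilde{P}|_{D_{+}(f)}\cong\widetilde{P_{(f)}}$, define the local map by the same fraction formula $\psi/f^{n}\mapsto\big(m/f^{k}\mapsto\psi(m)/f^{n+k}\big)$, and conclude by compatibility with the restrictions $D_{+}(fg)\subset D_{+}(f)$. Your write-up is merely more explicit about the intermediate affine comparison morphism $\widetilde{\Hom_{S_{(f)}}(M_{(f)},N_{(f)})}\to\innHom_{\CO_{D_{+}(f)}}(\widetilde{M_{(f)}},\widetilde{N_{(f)}})$ and the gluing check, which the paper leaves as ``straightforward.''
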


\begin{proof}
Let $f\in S_{d}$ be a homogenous element of degree $d$ and $U=D_{+}(f)$ the special open subset of $Y$ attached to $f$. For a graded $S$-module $P$ we have $\tilde{P}_{|_{U}}\cong \widetilde{P_{(f)}}$ and so, it is enough to show that we have a canonical and functorial morphism 
\[\big(*\Hom_{S}(M,N)\big)_{(f)}\to \Hom_{S_{(f)}}(M_{(f)},N_{(f)})\]
compatible with restrictions $D_{+}(f)\to D_{+}(g)$. So, let $\frac{\phi}{f^{n}}$ be a degree zero quotient in $\big(*\Hom_{S}(M,N)\big)_{(f)}$, so, $\phi$ has degree $nd$. We send this quotient to the morphism $M_{(f)}\to N_{(f)}$ that sends $\frac{m_{id}}{f^{i}}$ to $\frac{\phi(m_{id})}{f^{n+i}}$. It is now straightforward to check that all the required conditions are satisfied and thus, we have the desired morphism 
\[\big(*\Hom_{S}(M,N)\big)^{\widetilde{}}\to \innHom_{\CO_{Y}}(\tilde{M},\tilde{N})\]
\end{proof}

\begin{prop}
\label{PropGrShGr}
Let $S=\bigoplus_{d\geq0}S_{d}$ be a graded ring and $M_{1},\dots,M_{r}, M, N$ graded $S$-modules. Set $Y=\Proj S$ and for $i=1,\dots,r$, let $\tilde{M}_{i}$ (respectively $\tilde{M}, \tilde{N}$) be the $\CO_{Y}$-module associated with $M_{i}$ (respectively $M, N$). Then there are canonical and functorial homomorphisms
\begin{align}
\label{EqGrShGr}
\Mult_{\rm gr}(M_{1}\times\dots\times M_{r},N)&\to \Mult_{\CO_{Y}}(\tilde{M}_{1}\times\dots\times\tilde{M}_{r},\tilde{N})\\
\Alt_{\rm gr}(M^{\times r},N)&\to \Alt_{\CO_{Y}}(\tilde{M}^{\times r},\tilde{N})\\
\Sym_{\rm gr}(M^{\times r},N)&\to \Sym_{\CO_{Y}}(\tilde{M}^{\times r},\tilde{N})
\end{align}
where on the right hand sides we have respectively the group of multilinear, alternating and symmetric morphisms of $\CO_{Y}$-modules.
\end{prop}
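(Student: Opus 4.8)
The plan is to argue by induction on the number $r$ of arguments, using the graded adjunction of Lemma \ref{LemAdjGradMod} to peel off one variable at a time and Lemma \ref{LemTildeGraded} to pass from graded $*\Hom$-modules to internal Hom-sheaves. For $r=1$ there is nothing to construct beyond the functoriality of the sheafification $P\mapsto\tilde P$: a degree-zero graded morphism $M\to N$ induces $\tilde M\to\tilde N$, which is the required element of $\Mult_{\CO_{Y}}(\tilde M,\tilde N)=\Hom_{\CO_{Y}}(\tilde M,\tilde N)$. (Lemma \ref{LemTildeGraded} is the refined, internal-Hom version of this base case, and it is exactly what makes the induction go through.)

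For the inductive step, start with $\tau\in\Mult_{\rm gr}(M_{1}\times\dots\times M_{r},N)$. By Lemma \ref{LemAdjGradMod}, $\Theta(\tau)$ is a graded multilinear morphism $M_{1}\times\dots\times M_{r-1}\to *\Hom_{S}(M_{r},N)$ into the graded $S$-module $*\Hom_{S}(M_{r},N)$. Applying the inductive hypothesis with this module as target produces a multilinear morphism of $\CO_{Y}$-modules
\[\tilde M_{1}\times\dots\times\tilde M_{r-1}\to\widetilde{*\Hom_{S}(M_{r},N)}.\]
Post-composing with the canonical morphism $\widetilde{*\Hom_{S}(M_{r},N)}\to\innHom_{\CO_{Y}}(\tilde M_{r},\tilde N)$ of Lemma \ref{LemTildeGraded} yields an element of $\Mult_{\CO_{Y}}\big(\tilde M_{1}\times\dots\times\tilde M_{r-1},\innHom_{\CO_{Y}}(\tilde M_{r},\tilde N)\big)$. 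Finally, the internal-Hom (tensor--Hom) adjunction for $\CO_{Y}$-modules identifies this group with $\Mult_{\CO_{Y}}(\tilde M_{1}\times\dots\times\tilde M_{r},\tilde N)$, giving the desired morphism. Since each arrow in the chain (the isomorphism $\Theta$, the inductive map, the Lemma \ref{LemTildeGraded} morphism, and the sheaf adjunction) is canonical and functorial in all the modules involved, so is the composite; this establishes the first homomorphism in (\ref{EqGrShGr}).

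For the alternating and symmetric statements one sets $M_{1}=\dots=M_{r}=M$ and must check that the construction is equivariant for the action of the symmetric group $\FS_{r}$ permuting the factors, so that an alternating (resp. symmetric) $\tau$ yields an alternating (resp. symmetric) sheaf morphism. I would verify this locally. Over a special open $D_{+}(f)$ with $f\in S_{d}$ one has $\tilde M_{|_{D_{+}(f)}}\cong\widetilde{M_{(f)}}$, and unwinding the three steps of the construction shows that the associated $S_{(f)}$-multilinear morphism $M_{(f)}^{\times r}\to N_{(f)}$ is given by
\[\Big(\tfrac{m_{1}}{f^{a_{1}}},\dots,\tfrac{m_{r}}{f^{a_{r}}}\Big)\mapsto\frac{\tau(m_{1},\dots,m_{r})}{f^{a_{1}+\dots+a_{r}}},\qquad m_{i}\in M_{a_{i}d},\]
using that $\tau(M_{a_{1}d}\times\dots\times M_{a_{r}d})\subset N_{(a_{1}+\dots+a_{r})d}$. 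This formula is manifestly invariant (resp. anti-invariant) under permuting the arguments whenever $\tau$ is symmetric (resp. alternating), and the local morphisms glue—compatibly with the restrictions $D_{+}(f)\to D_{+}(fg)$—to the global morphism produced by the induction, whence it lies in $\Sym_{\CO_{Y}}$ (resp. $\Alt_{\CO_{Y}}$).

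The main obstacle I anticipate is precisely this last compatibility: the inductive construction singles out the final variable $M_{r}$ and is therefore not visibly symmetric, so $\FS_{r}$-equivariance cannot be read off from naturality alone. Reducing to the explicit local formula on $D_{+}(f)$, and checking that it is well defined independently of the chosen representatives $m_{i}/f^{a_{i}}$ and compatible with restriction maps (exactly as in the proof of Lemma \ref{LemTildeGraded}), is the step requiring genuine care; once the morphism is identified with that symmetric formula, the alternating and symmetric assertions follow at once.
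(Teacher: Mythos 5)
Your proof is correct and follows essentially the same route as the paper: induction on $r$, the graded adjunction of Lemma \ref{LemAdjGradMod}, the canonical morphism of Lemma \ref{LemTildeGraded}, and the internal-Hom adjunction for $\CO_{Y}$-modules. The only difference is that you verify the alternating and symmetric cases via the explicit local formula on $D_{+}(f)$ --- a point the paper dismisses with ``the other two will be similar (and in fact follow from it)'' --- and this extra care is warranted, since the inductive construction does single out the last variable and the $\FS_{r}$-equivariance (including well-definedness on representatives $m_{i}/f^{a_{i}}$) is exactly what must be checked.
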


\begin{proof}
We will prove the first statement and the other two will be similar (and in fact follow from it). We are going to prove the statement by induction on $r$. For $r=1$, this follows from functoriality of the $(\_)^{\tilde{}}$ construction. So, assume the result holds for $r$ and we want to prove it for $r+1$. By Lemma \ref{LemAdjGradMod}, we have an isomorphism
\[\Mult_{\rm gr}(M_{1}\times\dots\times M_{r+1},N)\cong \Mult_{\rm gr}(M_{1}\times\dots\times M_{r},*\Hom_{S}(M_{r+1},N))\] By induction hypothesis, there is a morphism from the right hand side to 
\[\Mult_{\CO_{Y}}\big(\tilde{M}_{1}\times\dots\times\tilde{M}_{r},\big(*\Hom_{S}(M_{r+1},N)\big)^{\widetilde{}}\big)\] and composing with morphism (\ref{EqTildeGraded}), we obtain a morphism to 
\[\Mult_{\CO_{Y}}\big(\tilde{M}_{1}\times\dots\times\tilde{M}_{r},\innHom_{\CO_{Y}}(\tilde{M},\tilde{N})\big)\] which is isomorphic to 
\[\Mult_{\CO_{Y}}(\tilde{M}_{1}\times\dots\times\tilde{M}_{r+1},\tilde{N})\]All these morphisms being canonical and functorial, we obtain the desired morphism
\[\Mult_{\rm gr}(M_{1}\times\dots\times M_{r+1},N)\to \Mult_{\CO_{Y}}(\tilde{M}_{1}\times\dots\times\tilde{M}_{r+1},\tilde{N})\] and the proof is achieved.
\end{proof}

\subsection{Multilinear morphisms of vector bundles on the Fargues-Fontaine curve}

In this subsection, we show how multilinear morphisms of Dieudonn\'e modules and isocrystals define, in a natural way, multilinear morphisms between their associated vector bundles on the Fargues-Fontaine curve of $p$-adic Hodge theory.\\

Throughout this subsection, $R$ is a semiperfect ring of characterisitc $p>0$.\\

\begin{dfn}
\label{MultLinDieuMod}
Let $M, M_{1},\dots,M_{r}$ be rational Dieudonn\'e modules over $R$ and $N$ an isocrystal over $R$. A \emph{multilinear morphism} $$\tau:M_{1}\times\dots\times M_{r}\to N$$ is a $B^{+}_{\rm cris}(R)$-multilinear morphism of $B^{+}_{\rm cris}(R)$-modules making the following diagrams commutative ($i=1,\dots,r$):
\[\xymatrix{M_{1}^{\sigma}\times\dots\times M_{r}^{\sigma}\ar[rr]^{\tau^{\sigma}}&&N^{\sigma}\ar[dd]^{F}\\
M_{1}\times\dots\times M_{i-1}\times M_{i}^{\sigma}\times M_{i+1}\times\dots\times M_{r}\ar[d]_{\Id\times\dots\times\Id\times F\times\Id\times\dots\times\Id}\ar[u]^{V\times\dots\times V\times\Id\times V\times\dots\times V}&&\\
M_{1}\times\dots\times M_{r}\ar[rr]_{\tau}&&N}\]
In other words, for all $i=1,\dots,r$ and all $x_{1},\dots,x_{r}$ (in the appropriate module!), we have 
\begin{align}
\label{EqMultLinDieuMod}
\nonumber F\big(\tau^{\sigma}(Vx_{1},\dots,Vx_{i-1},x_{i},Vx_{i+1},\dots, Vx_{r})\big)=\\\tau(x_{1},\dots,x_{i-1},Fx_{i},x_{i+1},\dots,x_{r})
\end{align}We will denote the $B^{+}_{\rm cris}(R)$-module of all such multilinear morphisms with $$\Mult_{R}(M_{1}\times\dots\times M_{r},N)$$ and if the chance of confusion is little, we drop $R$ from the notation. We denote by $\Alt_{R}(M^{\times r},N)$ (respectively $\Sym_{R}(M^{\times r},N)$) the subset of $\Mult_{R}(M^{\times r},N)$ consisting of alternating (respectively symmetric) elements.\xqed{\blacktriangle}
\end{dfn}

\begin{prop}
\label{PropMultGraded}
Let $M_{1},\dots,M_{r}$ be rational Dieudonn\'e modules over $R$ and $N$ an isocrystal over $R$. Then, every multilinear morphism $$M_{1}\times\dots\times M_{r}\to N$$ induces, by restriction, a graded $\BQ_{p}$-multilinear morphism (in the sense of Definition \ref{DefGradMultMor}, see also Definition \ref{DefGradedIsoCry})
\[M_{1,\rm gr}\times\dots\times M_{r,\rm gr}\to N_{\rm gr}\]in other words, restriction defines a homomorphism
\begin{align}
\label{EqInjDieuGrad}
\Mult(M_{1}\times\dots\times M_{r},N)\to \Mult_{\rm gr}(M_{1,\rm gr}\times\dots\times M_{r,\rm gr}\to N_{\rm gr})
\end{align}
\end{prop}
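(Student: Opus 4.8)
The plan is to show that restricting a multilinear morphism $\tau$ to the graded pieces produces a well-defined graded multilinear morphism, and that $\tau\mapsto\tau|_{\mathrm{gr}}$ is a group homomorphism. The last point is automatic once well-definedness is established, since restriction is visibly additive in $\tau$. Write $S_{0}=(\CP_{R})_{0}=\big(B^{+}_{\mathrm{cris}}(R)\big)^{\phi=1}$, and recall that the degree-$d$ part of $M_{i,\mathrm{gr}}$ (resp.\ $N_{\mathrm{gr}}$) is $M_{i}^{F=p^{d+1}}$ (resp.\ $N^{F=p^{d+1}}$), where $F$ denotes throughout the $\phi$-semilinear Frobenius $m\mapsto F(m\otimes 1)$. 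The $S_{0}$-linearity is nearly formal: for $a\in S_{0}$ and $m\in M_{i}^{F=p^{d+1}}$ one has $F(am)=\phi(a)Fm=a\,p^{d+1}m$, so each $M_{i}^{F=p^{d+1}}$ is an $S_{0}$-submodule of $M_{i}$; as $\tau$ is $B^{+}_{\mathrm{cris}}(R)$-multilinear it is a fortiori $S_{0}$-multilinear, hence (since $\BQ_{p}\subset S_{0}$) also $\BQ_{p}$-multilinear. The whole content is therefore the grading claim: if $x_{i}\in M_{i}^{F=p^{d_{i}+1}}$ for $i=1,\dots,r$, then $\tau(x_{1},\dots,x_{r})\in N^{F=p^{(d_{1}+\dots+d_{r})+1}}$.

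To prove this I would first extract from the compatibility diagram (\ref{EqMultLinDieuMod}) a single clean semilinear identity, namely
\begin{align*}
F\big(\tau(x_{1},\dots,x_{r})\big)=p^{-(r-1)}\,\tau\big(Fx_{1},\dots,Fx_{r}\big).
\end{align*}
The derivation rests on three ingredients. First, the base-change identity $\tau^{\sigma}(x_{1}\otimes 1,\dots,x_{r}\otimes 1)=\tau(x_{1},\dots,x_{r})\otimes 1$ together with $F(n)=F_{N}(n\otimes 1)$ rewrites the left-hand side as $F_{N}\big(\tau^{\sigma}(x_{1}\otimes 1,\dots,x_{r}\otimes 1)\big)$, where $F_{N}\colon N^{\sigma}\to N$ is the linear Frobenius. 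Second, from $F_{M}V_{M}=p\cdot\Id_{M}$ and the injectivity of the linear $F_{M}$ one obtains the relation $x\otimes 1=\tfrac1p V(Fx)$ in $M_{i}^{\sigma}$ (with $V$ the linear Verschiebung and $Fx$ the semilinear Frobenius); applying it in the slots $i=2,\dots,r$ and pulling the scalars out by multilinearity turns the expression into $p^{-(r-1)}F_{N}\big(\tau^{\sigma}(x_{1}\otimes 1,\,V(Fx_{2}),\dots,V(Fx_{r}))\big)$. Third, this is precisely the left-hand side of the compatibility condition (\ref{EqMultLinDieuMod}) for the index $i=1$, applied to the arguments $x_{1},Fx_{2},\dots,Fx_{r}$, so the condition rewrites it as $p^{-(r-1)}\tau(Fx_{1},Fx_{2},\dots,Fx_{r})$, yielding the identity.

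With this identity in hand the grading claim is immediate: substituting $Fx_{i}=p^{d_{i}+1}x_{i}$ gives
\begin{align*}
F\big(\tau(x_{1},\dots,x_{r})\big)=p^{-(r-1)}\prod_{i=1}^{r}p^{d_{i}+1}\cdot\tau(x_{1},\dots,x_{r})=p^{(d_{1}+\dots+d_{r})+1}\,\tau(x_{1},\dots,x_{r}),
\end{align*}
so $\tau(x_{1},\dots,x_{r})$ lies in the degree-$(d_{1}+\dots+d_{r})$ part of $N_{\mathrm{gr}}$, as required. The main obstacle is the middle step: the diagram (\ref{EqMultLinDieuMod}) interlaces the linear maps $F_{N}$, $V$ with the semilinear Frobenius and inserts a $V$ in every slot but one, so the crux is to recognize that converting $x\otimes 1$ into $\tfrac1p V(Fx)$ in all but one argument reduces the relation, after a \emph{single} use of the compatibility condition, to the symmetric semilinear identity above. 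The bookkeeping of the $p$-powers — one factor $p^{-1}$ per converted slot balanced against the $r$ eigenvalues $p^{d_{i}+1}$ — is exactly what forces the degrees to add, the surplus $+1$ matching the single $+1$ in the target eigenvalue.
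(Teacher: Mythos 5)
Your proof is correct and follows essentially the same route as the paper's: both convert the arguments in all slots but the first into the image of the Verschiebung (using $FV=p$ and injectivity of the linear $F$), apply the compatibility condition (\ref{EqMultLinDieuMod}) exactly once at $i=1$, and then pull out scalars by multilinearity. The only difference is cosmetic: you first establish the general identity $F\big(\tau(x_{1},\dots,x_{r})\big)=p^{-(r-1)}\tau(Fx_{1},\dots,Fx_{r})$ and then specialize to eigenvectors, whereas the paper substitutes the eigenvalue relations $m_{d_{i}}\otimes 1=V(p^{d_{i}}m_{d_{i}})$ directly into the same computation.
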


\begin{proof}
Let $\tau:M_{1}\times\dots\times M_{r}\to N$ be a multilinear morphism and take elements $m_{d_{i}}\in M_{i,d_{i}}$ ($i=1,\dots,r$) , i.e., $F(m_{d_{i}}\otimes 1)=p^{d_{i}+1}m_{d_{i}}$. We have to show that 
$\tau(m_{d_{1}},\dots,m_{d_{r}})$ is of degree $p^{d_{1}+\dots+d_{r}}$, i.e., lies in $N^{F=p^{d_{1}+\dots+d_{r}+1}}$. Since we have $F(m_{d_{i}}\otimes 1)=p^{d_{i}+1}m_{d_{i}}=F(Vp^{d_{i}}m_{d_{i}})$, and $F$ is injective, we have for all $i$
\[m_{d_{i}}\otimes 1=V(p^{d_{i}}m_{d_{i}})\] and so we have the following series of equalities:
\[F\big(\tau(m_{d_{1}},\dots,m_{d_{r}})\otimes 1\big)=F\tau^{\sigma}(m_{d_{1}}\otimes 1,\dots,m_{d_{r}}\otimes 1)=\]\[F\tau^{\sigma}\big(m_{d_{1}}\otimes 1,V(p^{d_{i}}m_{d_{2}}),\dots,V(p^{d_{i}}m_{d_{r}})\big)\ovset{(\ref{EqMultLinDieuMod})}{=}\]
\[\tau\big(F(m_{d_{1}}\otimes 1),p^{d_{2}}m_{d_{2}},\dots,p^{d_{r}}m_{d_{r}}\big)=\tau\big(p^{d_{1}+1}m_{d_{1}},p^{d_{2}}m_{d_{2}},\dots,p^{d_{r}}m_{d_{r}}\big)=\]\[p^{d_{1}+\dots+d_{r}+1}\tau(m_{d_{1}},\dots,m_{d_{r}})\] and the proof is achieved.
\end{proof}

\begin{cor}
\label{CorMultPresF=p}
Let $M_{1},\dots,M_{r}$ be rational Dieudonn\'e modules over $R$ and $N$ an isocrystal over $R$. Let $\tau:M_{1}\times\dots\times M_{r}\to N$ be a multilinear morphism, then the restriction of $\tau$ to $M_{1}^{F=p}\times\dots\times M_{r}^{F=p}$ factors through the subset $N^{F=p}$ of $N$, i.e., we have the following commutative diagram:
\[\xymatrix{M_{1}\times\dots\times M_{r}\ar[rr]^{\qquad\tau}&&N\\
M_{1}^{F=p}\times\dots\times M_{1}^{F=p}\ar[rr]_{\qquad\tau}\ar@{^{(}->}[u]&&N_{1}^{F=p}\ar@{^{(}->}[u]}\]So, we have a homomorphism \begin{align}
\label{EqMultPresF=p}\Mult(M_{1}\times\dots\times M_{r},N)\to\Mult(M_{1}^{F=p}\times\dots\times M_{r}^{F=p},N^{F=p})\end{align}
\end{cor}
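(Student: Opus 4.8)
The plan is to obtain this corollary as the degree-zero case of Proposition \ref{PropMultGraded}. First I would recall from Definition \ref{DefGradedIsoCry} that, for an isocrystal $N$, the degree $d$ component of the graded module $N_{\rm gr}$ is exactly $N^{F=p^{d+1}}$; in particular the degree-zero component of $N_{\rm gr}$ is $N^{F=p}$, and likewise the degree-zero component of $M_{i,\rm gr}$ is $M_i^{F=p}$ for each $i$ (here I use that each rational Dieudonn\'e module $M_i$ is in particular an isocrystal). Thus the subsets $M_i^{F=p}$ and $N^{F=p}$ appearing in the statement are precisely the degree-zero pieces of the graded modules $M_{i,\rm gr}$ and $N_{\rm gr}$.

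With this identification in hand, I would invoke Proposition \ref{PropMultGraded}, which asserts that $\tau$ restricts to a \emph{graded} multilinear morphism $M_{1,\rm gr}\times\dots\times M_{r,\rm gr}\to N_{\rm gr}$. By Definition \ref{DefGradMultMor} this means exactly that $\tau(M_{1,d_1}\times\dots\times M_{r,d_r})\subset N_{d_1+\dots+d_r}$ for all $d_1,\dots,d_r\geq 0$. Specializing to $d_1=\dots=d_r=0$ and using the identifications of the previous paragraph gives $\tau(M_1^{F=p}\times\dots\times M_r^{F=p})\subset N^{F=p}$, which is the asserted factorization. Since $\tau$ is $B^{+}_{\rm cris}(R)$-multilinear it is in particular $\BQ_p$-multilinear, and each $M_i^{F=p}$ and $N^{F=p}$ is a $\BQ_p$-vector space, so the restricted map lands in $\Mult(M_1^{F=p}\times\dots\times M_r^{F=p},N^{F=p})$ and the homomorphism (\ref{EqMultPresF=p}) follows by functoriality of restriction.

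I do not expect any genuine obstacle, since all the real work is already contained in Proposition \ref{PropMultGraded} and the corollary is merely its $d_i=0$ instance. If one preferred to avoid citing the full graded statement, one can read the needed identity directly off the defining relation (\ref{EqMultLinDieuMod}): for $x_i\in M_i^{F=p}$ one has $F(x_i\otimes 1)=px_i$, hence $x_i\otimes 1=V(x_i)$ by injectivity of $F$, and then applying (\ref{EqMultLinDieuMod}) with $i=1$ gives $F\big(\tau(x_1,\dots,x_r)\otimes 1\big)=F\tau^{\sigma}(x_1\otimes 1,Vx_2,\dots,Vx_r)=\tau(px_1,x_2,\dots,x_r)=p\,\tau(x_1,\dots,x_r)$, so that $\tau(x_1,\dots,x_r)\in N^{F=p}$, as required.
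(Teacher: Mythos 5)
Your proof is correct and follows exactly the paper's route: the paper's own proof of Corollary \ref{CorMultPresF=p} is precisely to combine Proposition \ref{PropMultGraded} with Definition \ref{DefGradMultMor}, specialized (as you do) to the degree-zero components $M_i^{F=p}$ and $N^{F=p}$ of the graded modules from Definition \ref{DefGradedIsoCry}. Your supplementary direct computation via (\ref{EqMultLinDieuMod}) is also valid, but it is just the degree-zero instance of the computation already carried out in the paper's proof of Proposition \ref{PropMultGraded}, so it does not constitute a different approach.
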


\begin{proof}
This follows from the definition of graded multilinear morphism and the proposition.
\end{proof}

\begin{rem}
We have statements similar to the previous proposition and corollary for alternating and symmetric multilinear morphisms. So, for example, if $M$ is a rational Dieudonn\'e module over $R$ and $N$ is an isocrystal over $R$, then we have a homomorphism:
\begin{align}
\label{NatHomMultDieuCrys}
\Alt(M^{r},N)\to\Alt\big((M^{F=p})^{\times r},N^{F=p}\big)
\end{align}
\xqed{\lozenge}
\end{rem}

\begin{cor}
\label{CorUnivAltMor}
Let $M$ be a rational Dieudonn\'e module over $R$ and fix a natural number $r$. Assume that $\ep^{r}_{B^{+}_{\rm cris}(R)}M$ has a Frobenius $F$ (i.e., is an isocrystal over $R$) such that the universal alternating morphism of $B^{+}_{\rm cris}(R)$-modules
\begin{align*}
\lambda_{M}:M^{\times r}&\to \ep^{r}_{B^{+}_{\rm cris}(R)}M\\
(m_{1},\dots,m_{r})&\mapsto m_{1}\wedge\dots\wedge m_{r}
\end{align*}
is a multilinear morphism in the sense of Definition \ref{MultLinDieuMod}. Then, this alternating morphism induces a morphism
\begin{align}\label{MapFrob=p}\ep^{r}_{\BQ_{p}}(M^{F=p})\to\big(\ep^{r}_{B^{+}_{\rm cris}(R)}M\big)^{F=p}\end{align}
\end{cor}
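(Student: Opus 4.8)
The plan is to deduce the statement formally from the alternating analogue of Corollary \ref{CorMultPresF=p} together with the universal property of the exterior power over $\BQ_{p}$; I do not expect any genuine difficulty here, since all the substantive work has already been carried out in Proposition \ref{PropMultGraded} and the remark preceding the corollary.

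First I would record that the universal alternating morphism $\lambda_{M}$ is an alternating element of $\Mult_{R}(M^{\times r},\ep^{r}_{B^{+}_{\rm cris}(R)}M)$. Indeed, it is alternating by construction, and by the standing hypothesis of the corollary it is a multilinear morphism in the sense of Definition \ref{MultLinDieuMod}, with the isocrystal $N$ taken to be $\ep^{r}_{B^{+}_{\rm cris}(R)}M$. Thus $\lambda_{M}\in\Alt_{R}(M^{\times r},\ep^{r}_{B^{+}_{\rm cris}(R)}M)$.

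Next I would feed $\lambda_{M}$ into the homomorphism (\ref{NatHomMultDieuCrys}) of the remark preceding the corollary, namely the alternating version of Corollary \ref{CorMultPresF=p}. This produces an alternating morphism
\[
\bar{\lambda}_{M}\colon (M^{F=p})^{\times r}\longrightarrow \big(\ep^{r}_{B^{+}_{\rm cris}(R)}M\big)^{F=p}.
\]
The point I would emphasize is that $\bar{\lambda}_{M}$ is $\BQ_{p}$-multilinear: the subsets $M^{F=p}$ and $(\ep^{r}_{B^{+}_{\rm cris}(R)}M)^{F=p}$ are exactly the degree-zero pieces of $M_{\rm gr}$ and of $(\ep^{r}_{B^{+}_{\rm cris}(R)}M)_{\rm gr}$ (Definition \ref{DefGradedIsoCry}), and on those the graded morphism supplied by Proposition \ref{PropMultGraded} is $\BQ_{p}$-multilinear and alternating. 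Moreover $M^{F=p}$ is a genuine $\BQ_{p}$-vector space: since the semilinear $F$ is $\phi$-semilinear and $\phi$ fixes $\BQ_{p}\subset B^{+}_{\rm cris}(R)$, the relation $F(am)=\phi(a)F(m)=p\,(am)$ for $a\in\BQ_{p}$ shows that $M^{F=p}$ is stable under scaling by $\BQ_{p}$.

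Finally I would invoke the universal property of the $r$-th exterior power over $\BQ_{p}$: being an alternating $\BQ_{p}$-multilinear map out of $(M^{F=p})^{\times r}$, the morphism $\bar{\lambda}_{M}$ factors uniquely through the universal alternating map, yielding
\[
\ep^{r}_{\BQ_{p}}(M^{F=p})\longrightarrow \big(\ep^{r}_{B^{+}_{\rm cris}(R)}M\big)^{F=p},
\]
which is the desired morphism (\ref{MapFrob=p}). The only step requiring a modicum of care is keeping track of the two distinct base rings: the Frobenius-fixed points are formed inside the $B^{+}_{\rm cris}(R)$-module $\ep^{r}_{B^{+}_{\rm cris}(R)}M$, whereas the exterior power in the source is taken over $\BQ_{p}$ and its universal property is applied over $\BQ_{p}$. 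This bookkeeping is precisely what the degree grading of Proposition \ref{PropMultGraded} guarantees, and it is the only place where one could slip.
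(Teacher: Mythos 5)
Your proposal is correct and follows essentially the same route as the paper: both apply the alternating homomorphism (\ref{NatHomMultDieuCrys}) with $N=\ep^{r}_{B^{+}_{\rm cris}(R)}M$ to obtain an alternating morphism $(M^{F=p})^{\times r}\to\big(\ep^{r}_{B^{+}_{\rm cris}(R)}M\big)^{F=p}$, and then factor it through $\ep^{r}_{\BQ_{p}}(M^{F=p})$ by the universal property of the exterior power. Your additional verifications (that $M^{F=p}$ is a $\BQ_{p}$-vector space and that the induced map is $\BQ_{p}$-multilinear, via the degree-zero pieces of the grading) are details the paper leaves implicit, but they do not change the argument.
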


\begin{proof}
If in (\ref{NatHomMultDieuCrys}) we replace $N$ with $\ep^{r}M$, then the image of $\lambda_{M}$ is an alternating morphism 
\begin{align}
\label{EqCanAltMorF=p}
\lambda_{M}:(M^{F=p})^{\times r}\to (\ep^{r}M)^{F=p}\end{align} and therefore, it induces a canonical homomorphism \[\ep^{r}_{\BQ_{p}}(M^{F=p})\to\big(\ep^{r}M\big)^{F=p}\] as desired.
\end{proof}


\begin{cons}
Using Notations \ref{NotationLambdaMap}, from 
$$\lambda_{M}: (M^{F=p})^{\times r}\to (\ep^{r}M)^{F=p}$$ we obtain a morphism
\begin{align}
\label{MapExtPowerFrob=p}
\Lambda_{r}\lambda_{M}:(M^{F=p})^{\times h}\to \Big((\ep^{r}M)^{F=p}\Big)^{\times\binom{h}{r}}\end{align}\xqed{\blacktriangledown}
\end{cons}




\begin{prop}
\label{PropMultIndSh}
Let $M, M_{1},\dots,M_{r}$ be rational Dieudonn\'e modules over $R$ and $N$ an isocrystal over $R$. There are natural and functorial homomorphisms
\begin{align}\label{EqMultIndSh}
\Mult(M_{1}\times\dots\times M_{r},N)&\to\Mult(\CE_{M_{1}}\times\dots\times\CE_{M_{r}},\CE_{N})\\
\Alt(M^{\times r},N)&\to\Alt(\CE_{M}^{\times r},\CE_{N})\\
\Sym(M^{\times r},N)&\to\Sym(\CE_{M}^{\times r},\CE_{N})
\end{align}
\end{prop}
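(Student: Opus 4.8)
The plan is to realize all three homomorphisms as the composite of two constructions already established in this section: first pass from a morphism of (rational) Dieudonné modules and isocrystals to a graded multilinear morphism of the associated graded $\CP_{R}$-modules, and then feed this into the sheafification result of Proposition \ref{PropGrShGr}, applied to the graded ring $S=\CP_{R}$ with $Y=\Proj\CP_{R}$. The point is that, by Definition \ref{DefGradedIsoCry}, the vector bundle $\CE_{N}$ is \emph{by construction} the $\CO_{Y}$-module $\widetilde{N_{\rm gr}}$ associated with the graded $\CP_{R}$-module $N_{\rm gr}=\bigoplus_{d\geq 0}N^{F=p^{d+1}}$, and likewise $\CE_{M_{i}}=\widetilde{M_{i,\rm gr}}$ (regarding each rational Dieudonné module $M_{i}$ as an isocrystal by forgetting $V$). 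Hence the target groups $\Mult_{\CO_{Y}}(\CE_{M_{1}}\times\dots\times\CE_{M_{r}},\CE_{N})$, and their alternating and symmetric analogues, are exactly the groups on the right-hand side of \ref{EqGrShGr} for the graded ring $\CP_{R}$.

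The construction then proceeds in two steps, carried out uniformly in the multilinear, alternating, and symmetric cases. Step one is Proposition \ref{PropMultGraded}: restriction sends a multilinear morphism $\tau:M_{1}\times\dots\times M_{r}\to N$ to a graded $\BQ_{p}$-multilinear morphism $M_{1,\rm gr}\times\dots\times M_{r,\rm gr}\to N_{\rm gr}$ of graded $\CP_{R}$-modules, i.e. the homomorphism \ref{EqInjDieuGrad}; the alternating and symmetric variants are the analogues recorded in the remark following Corollary \ref{CorMultPresF=p}, and restriction visibly preserves the alternating (resp. symmetric) property. Step two is Proposition \ref{PropGrShGr} applied with $S=\CP_{R}$: it turns a graded multilinear (resp. alternating, symmetric) morphism of graded $\CP_{R}$-modules into a multilinear (resp. alternating, symmetric) morphism of the associated $\CO_{Y}$-modules. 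Composing the two and using the identifications $\CE_{M_{i}}=\widetilde{M_{i,\rm gr}}$ and $\CE_{N}=\widetilde{N_{\rm gr}}$ yields the three desired maps, and since both steps are natural and functorial in their arguments, so is the composite.

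The genuine content has thus been front-loaded into the two cited propositions: the degree-additivity computation in \ref{PropMultGraded}, where the relation $FV=p=VF$ forces $\tau$ to raise Frobenius-weights additively so that $\tau(M_{1,d_{1}},\dots,M_{r,d_{r}})\subset N_{d_{1}+\dots+d_{r}}$, and the $*\Hom$/adjunction machinery sheafified in \ref{PropGrShGr}. For Proposition \ref{PropMultIndSh} itself, the only point I expect to warrant an explicit check — and the main (if mild) obstacle — is that the two pipelines meet correctly: namely that the graded $\CP_{R}$-module structure on $N_{\rm gr}$ used in Definition \ref{DefGradedIsoCry} to define $\CE_{N}$ is precisely the structure on which Proposition \ref{PropGrShGr} operates, so that the grading conventions line up. This should be routine, since in both places the degree-$d$ part is $N^{F=p^{d+1}}$, the degree-$d$ part of $\CP_{R}$ is $\big(B^{+}_{\rm cris}(R)\big)^{\phi=p^{d}}$, and the action is multiplication inside $N$; no further analysis of the bundles $\CE_{M_{i}},\CE_{N}$ is needed, as the sheaf-level maps are constructed for arbitrary associated $\CO_{Y}$-modules.
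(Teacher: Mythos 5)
Your proposal is correct and takes essentially the same route as the paper: the paper's proof is precisely the composition of the restriction homomorphism (\ref{EqInjDieuGrad}) from Proposition \ref{PropMultGraded} with the sheafification homomorphism (\ref{EqGrShGr}) from Proposition \ref{PropGrShGr}, applied to the graded ring $\CP_{R}$ so that $\CE_{N}=\widetilde{N_{\rm gr}}$ by Definition \ref{DefGradedIsoCry}. Your extra check that the grading conventions match is fine but, as you suspected, routine.
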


\begin{proof}
These homomorphisms are given by composing homomorphisms (\ref{EqGrShGr}) and (\ref{EqInjDieuGrad}).
\end{proof}

\begin{rem}
Let  $R$ be $\CO_{\mathbf{C}}/p$. Taking global sections, induces a homomorphism (using the canonical isomorphism (\ref{EqGlobSecVecBun}))
\[\Mult(\CE_{M_{1}}\times\dots\times\CE_{M_{r}},\CE_{N})\to\Mult(M_{1}^{F=p}\times\dots\times M_{r}^{F=p},N^{F=p})\]whose composition with homomorphism (\ref{EqMultIndSh}) is nothing but homomorphism (\ref{EqMultPresF=p}). \xqed{\lozenge}
\end{rem}

\begin{cor}
Let $M$ be a rational Dieudonn\'e module over $R$ satisfying the condition of Corollary \ref{CorUnivAltMor}. Then, we have a canonical morphism of $\CO_{X}$-modules:
\begin{align}
\label{MapExtVecBun}
\mathscr{L}_{M}:\ep^{r}_{\CO_{X}}\CE_{M}\to\CE_{\ep^{r}M}
\end{align}
\end{cor}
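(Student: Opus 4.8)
The plan is to obtain $\mathscr{L}_{M}$ as the unique factorization, through the universal exterior power, of the sheaf-level alternating morphism induced by the universal alternating map $\lambda_{M}$ of Dieudonn\'e modules. The whole argument is formal once Proposition \ref{PropMultIndSh} is available; no further $p$-adic Hodge theory enters. First I would record that, by the very hypothesis imported from Corollary \ref{CorUnivAltMor}, the object $\ep^{r}M=\ep^{r}_{B^{+}_{\rm cris}(R)}M$ is an isocrystal over $R$ and the universal alternating map
\[\lambda_{M}:M^{\times r}\to \ep^{r}M,\qquad (m_{1},\dots,m_{r})\mapsto m_{1}\wedge\dots\wedge m_{r},\]
is a multilinear morphism in the sense of Definition \ref{MultLinDieuMod}; being alternating, it is an element of $\Alt_{R}(M^{\times r},\ep^{r}M)$. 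I would then feed $\lambda_{M}$ into the alternating version of Proposition \ref{PropMultIndSh} (its second displayed homomorphism, applied with $N=\ep^{r}M$), producing a canonical alternating $\CO_{X}$-multilinear morphism of the associated sheaves
\[\widetilde{\lambda}_{M}:\CE_{M}^{\times r}\to \CE_{\ep^{r}M}.\]

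Finally I would invoke the universal property of the exterior power in the category of $\CO_{X}$-modules: since $\CE_{M}$ is an $\CO_{X}$-module (in fact locally free), alternating $\CO_{X}$-multilinear morphisms out of $\CE_{M}^{\times r}$ correspond bijectively to $\CO_{X}$-linear morphisms out of $\ep^{r}_{\CO_{X}}\CE_{M}$. Applying this correspondence to $\widetilde{\lambda}_{M}$ yields a unique $\CO_{X}$-linear morphism $\mathscr{L}_{M}:\ep^{r}_{\CO_{X}}\CE_{M}\to\CE_{\ep^{r}M}$ whose composition with the tautological alternating map $\CE_{M}^{\times r}\to\ep^{r}_{\CO_{X}}\CE_{M}$ equals $\widetilde{\lambda}_{M}$. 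Canonicity of $\mathscr{L}_{M}$ is inherited from the naturality and functoriality asserted in Proposition \ref{PropMultIndSh}, combined with the uniqueness built into the universal property.

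There is no serious obstacle here; the single point deserving care is purely formal, namely that $\ep^{r}_{\CO_{X}}\CE_{M}$ genuinely corepresents alternating morphisms of $\CO_{X}$-modules, so that the factorization in the last step exists and is unique. This is standard for locally free sheaves, and it is precisely the sheaf-level analogue of the module-level universal property already used to define $\lambda_{M}$ in the setting of Corollary \ref{CorUnivAltMor}.
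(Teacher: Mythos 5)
Your proposal is correct and follows essentially the same route as the paper: the paper likewise feeds the universal alternating morphism $\lambda_{M}\in\Alt(M^{\times r},\ep^{r}M)$ through the alternating homomorphism of Proposition \ref{PropMultIndSh} and then uses the identification $\Alt(\CE_{M}^{\times r},\CE_{\ep^{r}M})\cong\Hom(\ep^{r}_{\CO_{X}}\CE_{M},\CE_{\ep^{r}M})$ coming from the universal property of the exterior power of $\CO_{X}$-modules. The only difference is presentational: you spell out the corepresentability step that the paper compresses into a single displayed isomorphism.
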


\begin{proof}
By Proposition \ref{PropMultIndSh} we have a homomorphism \[\Alt(M^{\times r},\ep^{r}M)\to\Alt(\CE_{M}^{\times r},\CE_{\ep^{M}})\cong\Hom(\ep^{r}\CE_{M},\CE_{\ep^{r}M})\]The universal alternating  morphism $M^{\times r}\to\ep^{r}M$ now gives the desired morphism $\mathscr{L}_{M}$.
\end{proof}

\subsection{Multilinear theory of $p$-divisible groups}

In this subsection we recall some constructions and results on multilinear theory of $p$-divisible groups from \cite{H2}, and further develop the theory to study multilinear constructions of the universal covers of $p$-divisible groups and their adic generic fiber. From now on (until the end of the paper), we will assume that $p$ is an odd prime number.\\

\begin{notation}
We denote by $\mathcal{BT}_{h}, \CB\CT_{h,\leq1}, \CB\CT_{h}^{n}$ and $\CB\CT_{h,\leq1}^{n}$ respectively the smooth algebraic stack of $p$-divisible groups of height $h$, $p$-divisible groups of height $h$ and dimension at most $1$, truncated Barsotti-Tate groups of height $h$ and level $n$ and truncated Barsotti-Tate groups of height $h$, level $n$ and dimension at most $1$.\xqed{\star}
\end{notation}

Let us recall the definition of multilinear morphisms of $p$-divisible groups:

\begin{dfn}
Let $S$ be a scheme and $G_{0},\dots,G_{1},\dots,G_{r}$ be $p$-divisible groups over $S$. A \emph{multilinear morphism} $\phi:G_{1}\times\dots\times G_{r}\to G_{0}$ is a system $(\phi_{n})$ of multilinear morphisms of fppf sheaves
\[\phi_{n}:G_{1}[p^{n}]\times\dots\times G_{r}[p^{n}]\to G_{0}[p^{n}]\] compatible with the projections $G_{i}[p^{n+1}]\onto G_{i}[p^{n}]$, in the sense that for all $n$, the following diagram is commutative:
\[\xymatrix{G_{1}[p^{n+1}]\times\dots\times G_{r}[p^{n+1}]\ar[rr]^{\qquad\phi_{n+1}}\ar@{->>}[d]&&G_{0}[p^{n+1}]\ar@{->>}[d]\\G_{1}[p^{n}]\times\dots\times G_{r}[p^{n}]\ar[rr]_{\quad\phi_{n}}&&G_{0}[p^{n}]}\]
\emph{Alternating} and \emph{symmetric} multilinear morphisms are defined similarly.\xqed{\blacktriangle}
\end{dfn}

\begin{lem}
\label{LemIsoMultTate}
Let $S$ be a scheme and $G,G_{0},\dots,G_{1},\dots,G_{r}$ be $p$-divisible groups over $S$. There are canonical homomorphism, functorial in all arguments 
\begin{align}
\label{EqIsoMultTate}
\Mult\big(G_{1}\times\dots\times G_{r},G_{0}\big)&\to \Mult_{\BZ_{p}}\big(T_p(G_{1})\times\dots\times T_p(G_{r}),T_p(G_{0})\big)\\
\label{AltTate}\Alt\big(G^{\times r},G_{0}\big)&\to\Alt_{\BZ_{p}}\big(T_p(G)^{\times r},T_p(G_{0})\big)\\
\Sym\big(G^{\times r},G_{0}\big)&\to\Sym_{\BZ_{p}}\big(T_p(G)^{\times r},T_p(G_{0})\big)
\end{align}
\end{lem}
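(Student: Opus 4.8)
The plan is to construct the homomorphism levelwise and then pass to the inverse limit defining the Tate module. Recall from Definition \ref{DefTateModSh} that $T_p(G)=\invlim_{n}G[p^{n}]$ with transition morphisms given by multiplication by $p$, so a section of $T_p(G)$ is a compatible system $(x_{n})_{n}$ with $x_{n}\in G[p^{n}]$ and $p\,x_{n+1}=x_{n}$. Given a multilinear morphism $\phi=(\phi_{n})$ of $p$-divisible groups, I would define, for sections $x_{i}=(x_{i,n})_{n}\in T_p(G_{i})$,
\[
\phi_{*}(x_{1},\dots,x_{r}):=\big(\phi_{n}(x_{1,n},\dots,x_{r,n})\big)_{n},
\]
and analogously on any $S$-scheme so as to get a morphism of sheaves. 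The first task is to verify that this is a well-defined section of $T_p(G_{0})$, i.e.\ that $y_{n}:=\phi_{n}(x_{1,n},\dots,x_{r,n})$ satisfies $p\,y_{n+1}=y_{n}$.

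This is exactly where the compatibility condition in the definition of a multilinear morphism of $p$-divisible groups is used. Writing $x_{i,n}=p\,x_{i,n+1}$ and reading the defining commutative square for $(\phi_{n})$ with its vertical surjections $G_{i}[p^{n+1}]\onto G_{i}[p^{n}]$ (which are multiplication by $p$), the two ways around the square give
\[
\phi_{n}(p\,x_{1,n+1},\dots,p\,x_{r,n+1})=p\,\phi_{n+1}(x_{1,n+1},\dots,x_{r,n+1}),
\]
whose left-hand side is $\phi_{n}(x_{1,n},\dots,x_{r,n})=y_{n}$ and whose right-hand side is $p\,y_{n+1}$. Hence $(y_{n})_{n}\in T_p(G_{0})$ and $\phi_{*}$ is defined. (Equivalently, one could phrase the whole construction through the identification $T_p(G)\cong\innHom_{S}(\BQ_{p}/\BZ_{p},G)$ of Remark \ref{RemTateModAltDef}, sending $f_{i}\colon\BQ_{p}/\BZ_{p}\to G_{i}$ to the homomorphism $p^{-n}\mapsto\phi_{n}(f_{1}(p^{-n}),\dots,f_{r}(p^{-n}))$; the same computation shows this is a well-defined homomorphism.)

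It then remains to check the algebraic properties. Each $\phi_{n}$ is a multilinear morphism of fppf sheaves of $\BZ/p^{n}$-modules, hence $\BZ/p^{n}$-multilinear in each variable; since the $\BZ_{p}$-action on each Tate module is levelwise reduction modulo $p^{n}$, the limit map $\phi_{*}$ is $\BZ_{p}$-multilinear, giving homomorphism (\ref{EqIsoMultTate}). Functoriality in all arguments is immediate, as $\phi_{*}$ is built levelwise and $T_p$ is a functor. For (\ref{AltTate}) and the symmetric case, if $\phi$ is alternating (resp.\ symmetric) then every $\phi_{n}$ is, and both properties are tested on coincidences of arguments that hold levelwise (if $x_{i}=x_{j}$ then $x_{i,n}=x_{j,n}$ for all $n$), so $\phi_{*}$ inherits them. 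The only genuine point of the proof is the compatibility computation above: the one place an error could enter is the direction of the transition maps and the placement of the multiplication by $p$, matching the defining square of a multilinear morphism of $p$-divisible groups with the inverse system defining $T_p$; everything else is formal.
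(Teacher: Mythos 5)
Your proof is correct and takes essentially the same approach as the paper: the paper's proof simply takes the inverse limit of the levelwise morphisms $\phi_{n}$, observing that inverse limits commute with products, and notes that functoriality and the alternating/symmetric properties are inherited by construction. Your explicit verification that $p\,y_{n+1}=y_{n}$ is exactly the compatibility that makes the paper's limit morphism well defined, so you are spelling out the detail the paper leaves implicit rather than doing anything different.
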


\begin{proof}
Let $\phi:G_{1}\times\dots\times G_{r}\to G_{0}$ be a multilinear morphism. Taking the inverse limit of $$\phi_{n}:G_{1}[p^{n}]\times\dots\times G_{r}[p^{n}]\to G_{0}[p^{n}]$$ and observing that inverse limit commutes with products, we obtain a multilinear morphism $$T_{p}(\phi):T_{p}(G_{1})\times\dots\times T_{p}(G_{r})\to T_{p}(G_{0})$$
By construction, this homomorphism is functorial in all arguments (as is the Tate module construction). Alternating and symmetric multilinear morphisms are preserved under the homomorphism $T_{p}$ thus defined.
\end{proof}

\begin{thm}
\label{MainThmHed}
Fix natural numbers $1\leq r\leq h$. There exists a unique morphism of stacks
\[\ep^{r}:\CB\CT_{h,\leq 1}^{n}\to \CB\CT_{\binom{h}{r}}^{n}\]
satisfying the following
\begin{enumerate}[(1)]
\item by taking the limit, $\ep^{r}$ induces a morphism
\[\CB\CT_{h,\leq 1}\to \CB\CT_{\binom{h}{r}}\]
\item
if $S$ is a scheme and $G$ is in $\CB\CT_{h,\leq 1}(S)$, then we have a canonical isomorphism 
\begin{align}
\label{ExtPowCommTorPt}
(\ep^{r}G)[p^{n}]\cong \ep^{r}(G[p^{n}])
\end{align}
\item
if $G$ is in $\CB\CT_{h,\leq 1}(S)$, then for any $s\in S$, we have \[\dim (\ep^{r}G)(s)=\binom{h-1}{r-1}\cdot\dim G(s)\]
\item
if $G$ is in $\CB\CT_{h,\leq 1}^{n}(S)$, then $\ep^{r}G$ has the categorical property of exterior powers, i.e., there is an alternating morphism of fppf sheaves \[\lambda_{G}:G^{\times r}\to \ep^{r}G\] that makes $\ep^{r}G$ the $r^{\rm th}$-exterior power of $G$ in the category of finite flat groups schemes over $S$. In particular, for every $S$-scheme $Y$, there is a natural homomorphism \[\lambda_{*}(Y):\ep^{r}\big(G(Y)\big)\to \big(\ep^{r}G\big)(Y)\] of $Y$-valued points.
\item
if $G$ is in $\CB\CT_{h,\leq 1}(S)$, the universal alternating morphisms $$\lambda_{G[p^{n}]}:G[p^{n}]^{\times r} \to \ep^{r}(G[p^{n}])$$ define an alternating morphism 
\begin{align}
\label{UnivAltMorPDivGrp}
\lambda_{G}:G^{\times r}\to \ep^{r}G
\end{align}
that is universal in the category of $p$-divisible groups.
\item
let $G$ be in $\CB\CT_{h,\leq 1}(S)$. The alternating morphism 
\begin{align}
T_p(G)\times\dots\times T_p(G)\to T_{p}(\ep^{r}G)
\end{align}
given by the universal alternating morphism $\lambda_{G}$ and using (\ref{AltTate}) is universal, i.e., it induces an isomorphism of $\BZ_{p}$-sheaves:
\begin{align}
\ep^{r}\big(T_p(G)\big)\cong T_{p}(\ep^{r}G)
\end{align}
\item
if $G$ is in $\CB\CT_{h,\leq 1}^{n}(S)$ and $\alpha:(\BZ/p^{n})^{h}\to G(S)$ is a Drinfeld level structure, then the composition 
\[\ep^{r}\big((\BZ/p^{n})^{h}\big)\arrover{\ep^{r}\alpha}\ep^{r}\big(G(S)\big)\arrover{\lambda_{*}(S)}\big(\ep^{r}G\big)(S)\] (still denoted by $\ep^{r}\alpha$) is a Drinfeld level structure.
\item
if $k$ is a perfect field of characteristic $p$ and $G$ is in $\CB\CT_{h,\leq 1}(\Spec(k))$, then, we have a canonical isomorphism of Dieudonn\'e modules
\begin{align}
\label{IsoExtPowDieudMod}
\BD(\ep^{r}G)\cong \ep^{r}_{W(k)}\big(\BD(G)\big)
\end{align}
\end{enumerate}
\end{thm}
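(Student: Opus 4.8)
The plan is to take as the backbone the construction of exterior powers of $p$-divisible groups of dimension at most $1$ carried out in \cite{HF} and \cite{H2}, which produces, over an arbitrary base, a finite flat group scheme $\ep^{r}G$ together with a universal alternating morphism $\lambda_{G}\colon G^{\times r}\to \ep^{r}G$. Granting this, clauses (4) and (5) are essentially definitional, and the \emph{uniqueness} of the stack morphism follows formally: a universal object is determined up to a unique isomorphism, so the assignment $G\mapsto \ep^{r}G$ is canonically functorial and descends to a morphism of stacks. The genuine content at this first stage, and the place where the hypothesis $\dim\leq 1$ is indispensable, is \emph{flatness}: the exterior power of a truncated Barsotti--Tate group is a priori only an fppf sheaf, and one must show it is represented by a truncated Barsotti--Tate group of the asserted height and level. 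First I would reduce this to a fibrewise check via the local criterion of flatness, and then settle it on the Dieudonn\'e/display side, where the combinatorics forcing height $\binom{h}{r}$ appear as the rank of the $r$-th exterior power of a locally free module of rank $h$.

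For the numerical clauses, the height statement follows from computing $\ep^{r}$ on the Tate module in the \'etale case, or on the Dieudonn\'e module in general, both of which are locally free of rank $h$, so their $r$-th exterior powers have rank $\binom{h}{r}$. For the dimension formula (3) I would invoke Lemma \ref{LemExSeqExtPowMod} directly: the Hodge filtration gives a short exact sequence $0\to \omega_{G}\to \BD(G)\to \Lie(G)\to 0$ in which $\Lie(G)$ has rank $\dim G\leq 1$, so by the splitting underlying that lemma the module $\ep^{r}\BD(G)$ carries a canonical quotient isomorphic to $\ep^{r-1}\omega_{G}\otimes \Lie(G)$, which is locally free of rank $\binom{h-1}{r-1}\cdot\dim G$; identifying this quotient with the Lie algebra of $\ep^{r}G$ yields the formula.

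The compatibilities are then handled in sequence. For (2) I would observe that both $(\ep^{r}G)[p^{n}]$ and $\ep^{r}(G[p^{n}])$ corepresent the same functor of alternating morphisms out of the $p^{n}$-torsion, whence a canonical isomorphism; clause (1) follows by passing to the inverse limit over $n$ and using (2) to see that the limit is $p$-divisible. Clause (6) is obtained by applying the homomorphism (\ref{AltTate}) of Lemma \ref{LemIsoMultTate} to $\lambda_{G}$ and checking universality after reduction to the free $\BZ_{p}$-module $T_{p}(G)$, using (2) to commute $\ep^{r}$ with the limit $\invlim_{n} G[p^{n}]$. For the Dieudonn\'e compatibility (8), over a perfect field the Dieudonn\'e functor is an (anti)equivalence onto the category of Dieudonn\'e modules; I would equip $\ep^{r}_{W(k)}\BD(G)$ with the Frobenius and Verschiebung induced from those of $\BD(G)$, verify that they still satisfy $FV=VF=p$ on the exterior power and that the resulting module is effective (again using $\dim\leq 1$), and then transport the universal alternating property across the equivalence, so that uniqueness identifies the corresponding group with $\ep^{r}G$.

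The hard part will be clause (7), the preservation of Drinfeld level structures. Here the obstruction is the ``full set of sections''/divisor condition, which is not formal and does not follow merely from functoriality. My strategy would be to reduce to the universal situation over the moduli stack and then test the condition on the locus where $G$ is \'etale, e.g. the ordinary or generic locus, where it becomes the combinatorial statement that $\ep^{r}$ sends the standard basis of $(\BZ/p^{n})^{h}$ to a basis of $\ep^{r}\big((\BZ/p^{n})^{h}\big)\cong (\BZ/p^{n})^{\binom{h}{r}}$; one then extends the conclusion over the whole (normal, flat) base by a density and flatness argument. As indicated in the introduction, this is precisely the point requiring the additional machinery developed in \cite{HE}, and I expect it, together with the flatness input underlying the construction, to be the main obstacle of the proof.
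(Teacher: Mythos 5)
The paper offers no internal proof of this theorem: its entire proof is the one-line citation to \cite[Theorem A]{HE}, which in turn relies on \cite[Theorem 3.39 \& Proposition 3.31]{HF} and \cite[Theorem 3.25]{H2}. Your proposal identifies exactly the same sources and the same division of labor --- the construction of $\ep^{r}G$ and its universal alternating morphism $\lambda_{G}$ coming from \cite{HF} and \cite{H2}, and the preservation of Drinfeld level structures (clause (7)) as the genuinely new content requiring \cite{HE} --- so in terms of what can actually be compared, you follow the same route the paper takes. What you add are proof sketches for the individual clauses (flatness via the Dieudonn\'e/display side, the dimension formula via the Hodge filtration and the splitting of Lemma \ref{LemExSeqExtPowMod}, clause (6) by combining (\ref{AltTate}) with (2), clause (8) by transporting the universal property across the Dieudonn\'e equivalence); these cannot be checked against this paper, since it delegates everything, but they are broadly consistent with how the cited works proceed. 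One caution on the step you correctly single out as the hard one: your plan for clause (7) --- checking the full-set-of-sections condition on the \'etale locus and extending ``by density and flatness'' --- presupposes that the given pair $(G,\alpha)$ over an arbitrary base $S$ can be pulled back from a universal base that is flat (indeed regular) with a dense locus where $G$ is \'etale, and that the Katz--Mazur divisorial condition can be propagated from that dense locus; this reduction is itself a substantive argument, not a formality, and together with representability/flatness of $\ep^{r}G$ it is precisely where the content of \cite{HE} lies, as you anticipate.
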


\begin{proof}
This is \cite[Theorem A]{HE}, which relies on \cite[Theorem 3.39 \& Proposition 3.31]{HF} and \cite[Theorem 3.25]{H2}.
\end{proof}


\begin{lem}
\label{LemMutlMorUnivCover}
Let $R$ be a ring on which $p$ is topologically nilpotent. Let $G,G_{0},\dots,G_{r}$ be $p$-divisible groups over $R$. There are canonical homomorphisms, functorial in all arguments:
\begin{align}
\label{EqMutlMorUnivCover}
\Mult\big(G_{1}\times\dots\times G_{r},G_{0}\big)&\to \Mult_{\BQ_{p}}\big(\tilde{G}_{1}\times\dots\times \tilde{G}_{r},\tilde{G}_{0}\big)\\
\label{EqAltUnivCov}\Alt\big(G^{\times r},G_{0}\big)&\to\Alt_{\BQ_{p}}\big(\tilde{G}^{\times r},\tilde{G}_{0}\big)\\
\Sym\big(G^{\times r},G_{0}\big)&\to\Sym_{\BQ_{p}}\big(\tilde{G}^{\times r},\tilde{G}_{0}\big)
\end{align}
\end{lem}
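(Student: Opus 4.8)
The plan is to reduce everything to Lemma \ref{LemIsoMultTate} followed by a localization at $p$, using the canonical identification $\tilde{G}\cong T_p(G)[1/p]$ of Proposition \ref{PropPropUnivBTGrp}(3). Concretely, given a multilinear morphism $\phi\in\Mult(G_{1}\times\dots\times G_{r},G_{0})$, I would first apply (\ref{EqIsoMultTate}) to obtain a $\BZ_{p}$-multilinear morphism of Tate-module sheaves
\[
T_{p}(\phi):T_{p}(G_{1})\times\dots\times T_{p}(G_{r})\to T_{p}(G_{0}),
\]
which is functorial in all arguments and which, by Lemma \ref{LemIsoMultTate}, carries alternating (resp.\ symmetric) morphisms to alternating (resp.\ symmetric) ones.

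The second step is to invert $p$ at the level of sheaves. A $\BZ_{p}$-multilinear morphism of sheaves of $\BZ_{p}$-modules is the same datum as a $\BZ_{p}$-linear morphism out of the tensor-product sheaf, so $T_{p}(\phi)$ corresponds to $\overline{T_{p}(\phi)}:T_{p}(G_{1})\otimes_{\BZ_{p}}\dots\otimes_{\BZ_{p}}T_{p}(G_{r})\to T_{p}(G_{0})$. Applying the (sheafified) localization functor $(\_)[1/p]=(\_)\otimes_{\BZ_{p}}\BQ_{p}$ and using that it is exact and monoidal, hence commutes with tensor products and sheafification, I obtain a $\BQ_{p}$-linear morphism out of $T_{p}(G_{1})[1/p]\otimes_{\BQ_{p}}\dots\otimes_{\BQ_{p}}T_{p}(G_{r})[1/p]$, that is, a $\BQ_{p}$-multilinear morphism $T_{p}(G_{1})[1/p]\times\dots\times T_{p}(G_{r})[1/p]\to T_{p}(G_{0})[1/p]$. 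Transporting along the canonical isomorphisms $\tilde{G}_{i}\cong T_{p}(G_{i})[1/p]$ of Proposition \ref{PropPropUnivBTGrp}(3) then yields the desired $\tilde{\phi}\in\Mult_{\BQ_{p}}(\tilde{G}_{1}\times\dots\times\tilde{G}_{r},\tilde{G}_{0})$, and setting $\phi\mapsto\tilde{\phi}$ defines (\ref{EqMutlMorUnivCover}). Since both $T_{p}$ and $(\_)[1/p]$ are functorial, the composite is functorial in all arguments; since both are compatible with the symmetric-group action permuting the factors, alternating and symmetric morphisms are carried to alternating and symmetric morphisms, which gives (\ref{EqAltUnivCov}) and its symmetric analogue.

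The main point to be careful about is the sheaf-theoretic localization, and in particular the order of localization and inverse limit when $p$ is merely topologically nilpotent rather than nilpotent. As emphasized in the remark following Proposition \ref{PropPropUnivBTGrp}, $\tilde{G}$ is the localization $T_{p}(G)[1/p]$ computed \emph{before} passing to the inverse limit over the truncations $R/p^{n}$, so the construction must be carried out at the level of the $\BQ_{p}$-sheaf $T_{p}(G)[1/p]$ (equivalently, on the values over $\text{Nilp}_{R}^{\rm op}$ followed by sheafification) rather than by naively inverting $p$ on global sections. Once one localizes at the sheaf level, the universal property of $M[1/p]$ extending a $\BZ_{p}$-multilinear map on the $M_{i}$ to a $\BQ_{p}$-multilinear map on the $M_{i}[1/p]$ via $(m_{1}/p^{a_{1}},\dots,m_{r}/p^{a_{r}})\mapsto T_{p}(\phi)(m_{1},\dots,m_{r})/p^{a_{1}+\dots+a_{r}}$ is well defined precisely because $T_{p}(\phi)$ is $\BZ_{p}$-multilinear, and this assignment is compatible with the restriction maps, so it glues to the required morphism of $\BQ_{p}$-sheaves.
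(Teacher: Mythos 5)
Your proposal is correct and follows essentially the same route as the paper: the paper's proof is exactly the composition of Lemma \ref{LemIsoMultTate} with the identification $\tilde{G}\cong T_{p}(G)[1/p]$ from Proposition \ref{PropPropUnivBTGrp}(3), with the same caveat (via (\ref{EqIsomTildGTate})) that the localization at $p$ is taken at the level of sheaves, before the inverse limits over $R/p^{n}$. Your write-up merely makes explicit the sheaf-level localization and the extension of $\BZ_{p}$-multilinear maps to $\BQ_{p}$-multilinear ones, which the paper leaves implicit.
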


\begin{proof}
This follows from Lemma \ref{LemIsoMultTate}, and the isomorphism of part (3) of Proposition \ref{PropPropUnivBTGrp} (see also (\ref{EqIsomTildGTate})).
\end{proof}

\begin{lem}
\label{LemCommDiagTateUnivCoverMultMor}
Let $R$ be a ring on which $p$ is topologically nilpotent. Let $G,G_{0},\dots,G_{r}$ be $p$-divisible groups over $R$. The following diagrams are commutative:
\[\xymatrix{\Mult\big(G_{1}\times\dots\times G_{r},G_{0}\big)\ar[r]\ar[dr]&\Mult_{\BZ_{p}}\big(T_p(G_{1})\times\dots\times T_p(G_{r}),T_p(G_{0})\big)\ar[d]\\
&\Mult_{\BQ_{p}}\big(\tilde{G}_{1}\times\dots\times \tilde{G}_{r},\tilde{G}_{0}\big)}\]
\[\xymatrix{\Alt\big(G^{\times r},G_{0}\big)\ar[r]\ar[dr]&\Alt_{\BZ_{p}}\big(T_p(G)^{\times r},T_p(G_{0})\big)\ar[d]\\
&\Alt_{\BQ_{p}}\big(\tilde{G}^{\times r},\tilde{G}_{0}\big)}\]
\[\xymatrix{\Sym\big(G^{\times r},G_{0}\big)\ar[r]\ar[dr]&\Sym_{\BZ_{p}}\big(T_p(G)^{\times r},T_p(G_{0})\big)\ar[d]\\
&\Sym_{\BQ_{p}}\big(\tilde{G}^{\times r},\tilde{G}_{0}\big)}\]
where the vertical morphisms are induced by the isomorphism of part (3) of Proposition \ref{PropPropUnivBTGrp}.
\end{lem}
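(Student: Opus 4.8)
The plan is to show that all three triangles commute essentially by unwinding the constructions of the three arrows, since the diagonal arrow was built as the composite of the other two. First I would recall from the proof of Lemma \ref{LemMutlMorUnivCover} that the diagonal homomorphism (\ref{EqMutlMorUnivCover}) is \emph{defined} by first applying the homomorphism (\ref{EqIsoMultTate}) of Lemma \ref{LemIsoMultTate} --- which sends a multilinear morphism $\phi=(\phi_n)$ of $p$-divisible groups to the inverse limit $T_p(\phi)$ of the $\phi_n$ --- and then transporting the resulting $\BZ_p$-multilinear morphism of Tate modules along the isomorphism $\tilde G\cong T_p(G)[1/p]$ of part (3) of Proposition \ref{PropPropUnivBTGrp}.

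Next I would pin down the vertical arrow. Given a $\BZ_p$-multilinear sheaf morphism $\psi:T_p(G_1)\times\dots\times T_p(G_r)\to T_p(G_0)$, the isomorphism of part (3) identifies $\tilde G_i$ with $T_p(G_i)[1/p]$, and one obtains a $\BQ_p$-multilinear morphism $\tilde G_1\times\dots\times\tilde G_r\to \tilde G_0$ by localization, i.e.\ by the unique $\BQ_p$-multilinear extension sending $(x_1/p^{a_1},\dots,x_r/p^{a_r})$ to $\psi(x_1,\dots,x_r)/p^{a_1+\dots+a_r}$; this is well defined precisely because $\psi$ is $\BZ_p$-multilinear. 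This localization is exactly the second step in the construction of the diagonal arrow recalled above.

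With both arrows identified, the commutativity of the multilinear triangle is immediate: the diagonal is by construction the composite $\phi\mapsto T_p(\phi)\mapsto T_p(\phi)[1/p]$, which is the vertical arrow applied to the output of the top horizontal arrow. The alternating and symmetric triangles then follow by restriction, since (\ref{EqIsoMultTate}), the localization, and (\ref{EqMutlMorUnivCover}) all respect the alternating and symmetric conditions, as recorded in Lemmas \ref{LemIsoMultTate} and \ref{LemMutlMorUnivCover}.

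The only point requiring genuine care --- and the step I expect to be the main, if modest, obstacle --- is the bookkeeping when $p$ is merely topologically nilpotent rather than nilpotent. There the universal cover and the Tate module are defined by passing to the inverse limit over the truncations $R/p^n$, and the identification $\tilde G\cong T_p(G)[1/p]$ is the sheaf-level statement (\ref{EqIsomTildGTate}), in which the localization at $p$ is taken \emph{before} the inverse limit. I would therefore verify the commutativity first on each truncation $R/p^n$, where $p$ is nilpotent and the argument above applies verbatim, and then observe that all three arrows are compatible with the transition maps $R/p^{n+1}\onto R/p^n$, so that commutativity is preserved upon passing to the inverse limit. Since every arrow is a sheaf morphism and the part (3) isomorphism is functorial, this passage to the limit is routine.
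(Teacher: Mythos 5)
Your proposal is correct and follows exactly the paper's (very terse) argument: the paper proves this lemma by observing that it ``follows from the construction of the oblique morphisms,'' i.e.\ the diagonal arrow of Lemma \ref{LemMutlMorUnivCover} is by definition the composite of the Tate-module homomorphism (\ref{EqIsoMultTate}) with transport along the part (3) isomorphism, which is precisely your unwinding. Your additional care about the topologically nilpotent case via (\ref{EqIsomTildGTate}) and passage to the limit over truncations is a sound elaboration of a point the paper leaves implicit.
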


\begin{proof}
This follows from the construction of the oblique morphisms.
\end{proof}

\begin{lem}
Let $\CF_{1},\dots,\CF_{r},\CF$ and $\CG$ be (Zariski, fppf, \'etale etc.) pre-sheaves of Abelian groups on a site, and let us denote by $(\_)^{\sharp}$ the sheafification functor. We have canonical homomorphisms, functorial in all arguments:
\begin{align}
\Mult(\CF_{1}\times\dots\times\CF_{r},\CG)&\to\Mult(\CF^{\sharp}_{1}\times\dots\times\CF_{r}^{\sharp},\CG^{\sharp})\\
\Alt(\CF^{\times r},\CG)&\to\Alt(\CF^{\sharp,\times r},\CG^{\sharp})\\
\Sym(\CF^{\times r},\CG)&\to\Sym(\CF^{\sharp,\times r},\CG^{\sharp})
\end{align}
Furthermore, if $\CG$ is already a sheaf, then the above homomorphisms are bijective.
\end{lem}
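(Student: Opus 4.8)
The plan is to reduce everything to two soft facts about the sheafification functor $(\_)^{\sharp}$: that it is \emph{exact} (being a left adjoint to the inclusion of sheaves which moreover preserves finite limits), hence \emph{commutes with finite products}, and that it is \emph{additive}. The first fact gives, for presheaves of abelian groups, a canonical isomorphism
\[(\CF_{1}\times\dots\times\CF_{r})^{\sharp}\cong\CF_{1}^{\sharp}\times\dots\times\CF_{r}^{\sharp},\]
natural in all arguments, which I would record at the outset.

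To construct the homomorphism, I would start from a multilinear morphism $\tau$, view it as a morphism of presheaves $\CF_{1}\times\dots\times\CF_{r}\to\CG$, apply $(\_)^{\sharp}$, and compose with the product identification above to obtain $\tau^{\sharp}:\CF_{1}^{\sharp}\times\dots\times\CF_{r}^{\sharp}\to\CG^{\sharp}$. The content is to check that $\tau^{\sharp}$ is again multilinear (resp. alternating, symmetric). The key observation is that each defining condition is an equality of two morphisms of presheaves out of a suitable product of the $\CF_{i}$: additivity in the $i$-th slot is the equality of the two composites
\[\CF_{1}\times\dots\times\CF_{i}\times\CF_{i}\times\dots\times\CF_{r}\rightrightarrows\CG\]
obtained by first adding the doubled factor and then applying $\tau$, versus applying $\tau$ to each and then adding in $\CG$; the alternating and symmetric conditions are encoded similarly through the diagonal and the permutation maps, all of which are built from finite products and the group structures. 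Since $(\_)^{\sharp}$ is a functor that commutes with products and preserves the addition and permutation maps (these being themselves assembled from products), applying it carries each such identity of presheaf morphisms to the corresponding identity for $\tau^{\sharp}$. Additivity of $(\_)^{\sharp}$ shows that $\tau\mapsto\tau^{\sharp}$ is a group homomorphism, and naturality of the sheafification unit gives functoriality in all arguments.

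For the final claim, suppose $\CG$ is a sheaf, so that the unit $\CG\to\CG^{\sharp}$ is an isomorphism. The universal property of sheafification gives, for every presheaf $P$, a natural bijection $\Hom_{\rm PSh}(P,\CG)\cong\Hom_{\rm Sh}(P^{\sharp},\CG)$; taking $P=\CF_{1}\times\dots\times\CF_{r}$ and using the product identification, this becomes a bijection
\[\Hom_{\rm PSh}(\CF_{1}\times\dots\times\CF_{r},\CG)\cong\Hom_{\rm Sh}(\CF_{1}^{\sharp}\times\dots\times\CF_{r}^{\sharp},\CG)\]
under which $\tau\mapsto\tau^{\sharp}$, with inverse given by precomposition with the units $\CF_{i}\to\CF_{i}^{\sharp}$. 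It remains to see that this bijection matches the multilinear (resp. alternating, symmetric) subsets on the two sides. This is immediate from the diagrammatic description above: the adjunction bijection is natural, hence commutes with precomposition by the structural addition, diagonal, and permutation maps, so a morphism satisfies one of the defining identities if and only if its adjoint does. Therefore the bijection restricts to bijections $\Mult\cong\Mult$, $\Alt\cong\Alt$ and $\Sym\cong\Sym$, as claimed.

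The main obstacle is precisely the stability of the property ``multilinear'' under the functor $(\_)^{\sharp}$; the safest way around it is the diagrammatic reformulation, which reduces the whole lemma to the exactness and additivity of sheafification. An equivalent and perhaps more conceptual route would be to pass to tensor products: one has $\Mult(\CF_{1}\times\dots\times\CF_{r},\CG)=\Hom_{\rm PSh}(\CF_{1}\otimes\dots\otimes\CF_{r},\CG)$ with the presheaf tensor product computed section-wise, the sheaf tensor product is the sheafification of the presheaf one, and $(\CF_{1}\otimes\dots\otimes\CF_{r})^{\sharp}$ is the sheaf tensor product of the $\CF_{i}^{\sharp}$; both halves of the lemma then collapse to the sheafification adjunction applied to a single $\Hom$. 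I would present the diagrammatic argument as the main line and mention this reformulation as a remark.
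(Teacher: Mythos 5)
Your proof is correct, but it takes a genuinely different route from the paper's. The paper argues by induction on $r$: the case $r=1$ is the universal property of sheafification, and the inductive step curries the last variable through the presheaf internal Hom, using the canonical map $\innHom_{\rm pre}(\CF_{r+1},\CG)^{\sharp}\to\innHom(\CF_{r+1}^{\sharp},\CG^{\sharp})$ (an isomorphism when $\CG$ is a sheaf) to push the induction hypothesis forward; bijectivity for sheaf targets falls out because every arrow in the resulting chain is then an isomorphism. You instead handle all $r$ at once: you sheafify $\tau$ as a map out of the product, use that $(\_)^{\sharp}$ commutes with finite products, and observe that multilinearity, alternation and symmetry are diagrammatic conditions (assembled from addition, diagonal and permutation maps) preserved by any product-preserving functor; bijectivity then comes from the sheafification adjunction plus naturality. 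Your route is shorter, avoids the internal-Hom bookkeeping, and makes the alternating/symmetric cases transparent, at the price of invoking left-exactness of sheafification rather than only its universal property. One point you should make explicit: a multilinear $\tau$ is a morphism of presheaves of \emph{sets} (not of abelian groups) out of $\CF_{1}\times\dots\times\CF_{r}$, since it is not additive for the product group structure; so the $(\_)^{\sharp}$ you apply to $\tau$ is set-valued sheafification, and you need the standard compatibility that, for a presheaf of abelian groups, the set-level sheafification with its induced group structure agrees with the abelian-group sheafification. As written, your appeal to ``exactness'' and ``additivity'' of the abelian-group functor slightly obscures this; it is harmless but worth a sentence. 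Your closing remark reducing everything to the tensor-product adjunction is also a valid packaging of the same idea, and is arguably the cleanest formulation of all three.
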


\begin{proof}
We will only prove the first statement, as the other two are proven similarly. We will proceed by induction on $r$. If $r=1$, then both statements are true by the construction (universal property) of the sheafification functor. So, assume that we know both statements for $r\geq 1$.\\

Let us denote by $\innHom_{\rm pre}(\CF_{r+1},\CG)$ the pre-sheaf hom group, that is the pre-sheaf that sends $U$ to the group $\Hom_{U}(\CF_{r+1{|_{U}}},\CG_{|_{U}})$. Then, again by the universal property of $(\_)^{\sharp}$, we have a canonical homomorphism 
\begin{align}
\label{EqPreHomSheafify}
\innHom_{\rm pre}(\CF_{r+1},\CG)^{\sharp}\to\innHom(\CF_{r+1}^{\sharp},\CG^{\sharp})
\end{align}which is an isomorphism if $\CG$ is already a sheaf (in fact if $\CG$ is a sheaf, then $\innHom_{\rm pre}(\CF_{r+1},\CG)$ is a sheaf).\\

So, we have homomorphisms
\[\Mult\big(\CF_{1}\times\dots\times\CF_{r+1},\CG\big)\arrover{\cong}\Mult\big(\CF_{1}\times\dots\times\CF_{r},\innHom_{\rm pre}(\CF_{r+1},\CG)\big)\arrover{\rm ind. hyp.}\]
\[\Mult\big(\CF_{1}^{\sharp}\times\dots\times\CF_{r}^{\sharp},\innHom_{\rm pre}(\CF_{r+1},\CG)^{\sharp}\big)\arrover{(\ref{EqPreHomSheafify})}\]\[\Mult\big(\CF_{1}^{\sharp}\times\dots\times\CF_{r}^{\sharp},\Hom(\CF_{r+1}^{\sharp},\CG^{\sharp})\big)\arrover{\cong}\Mult(\CF^{\sharp}_{1}\times\dots\times\CF_{r}^{\sharp},\CG^{\sharp})\]
The composition yields the desired homomorphism. By induction hypothesis, and what we said above, if $\CG$ is already a sheaf, then the above homomorphisms are all isomorphisms.
\end{proof}

\begin{prop}
\label{PropAdGenFibPresMult}
Let $k$ be a perfect field of characteristic $p$ and $(R,R^{+})$  a complete affinoid $\big(W(k)[1/p],W(k)\big)$-algebra. Let $\CF_{1},\dots,\CF_{r},\CF$ and $\CG$ be sheaves on $\text{Nilp}_{R^{+}}^{\rm op}$. Then we have canonical homomorphisms, functorial in all arguments
\begin{align}
\label{EqAdGenFibPresMult}
\Mult(\CF_{1}\times\dots\times\CF_{r},\CG)&\to\Mult(\CF_{1,\eta}^{\rm ad}\times\dots\times\CF_{r,\eta}^{\rm ad},\CG_{\eta}^{\rm ad})\\
\Alt(\CF^{\times r},\CG)&\to\Alt(\CF_{\eta}^{\rm{ad}, \times \mathit{r}},\CG_{\eta}^{\rm ad})\\
\Sym(\CF^{\times r},\CG)&\to\Sym(\CF_{\eta}^{\rm{ad}, \times \mathit{r}},\CG_{\eta}^{\rm ad})
\end{align}
\end{prop}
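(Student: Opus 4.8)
The plan is to factor the adic generic fiber functor as a sheafification applied to a presheaf-level construction, and to check that each ingredient sends multilinear morphisms to multilinear morphisms. Write $T$ for the functor carrying a sheaf $\CF$ on $\text{Nilp}_{R^+}^{\rm op}$ to the presheaf of abelian groups on $\text{CAff}_{(R,R^+)}^{\rm op}$ defined by
\[
T(\CF)(S,S^+)=\dirlim_{S_0\subset S^+}\invlim_n\CF(S_0/p^n),
\]
so that by Definition \ref{DefAdGenFibFun} one has $\CF_\eta^{\rm ad}=T(\CF)^\sharp$. Since the targets $\CF_{i,\eta}^{\rm ad}=T(\CF_i)^\sharp$ and $\CG_\eta^{\rm ad}=T(\CG)^\sharp$ are exactly the sheafifications of $T(\CF_i)$ and $T(\CG)$, the preceding lemma on sheafification of multilinear morphisms reduces the whole statement to producing, functorially from a given $\tau\colon\CF_1\times\dots\times\CF_r\to\CG$, a multilinear (resp.\ alternating, symmetric) morphism of presheaves $T(\CF_1)\times\dots\times T(\CF_r)\to T(\CG)$.

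I would construct this presheaf morphism objectwise. For fixed $(S,S^+)$, an open and bounded subalgebra $S_0\subset S^+$, and an integer $n$, evaluating the natural transformation $\tau$ at $S_0/p^n$ gives a multilinear map of abelian groups $\CF_1(S_0/p^n)\times\dots\times\CF_r(S_0/p^n)\to\CG(S_0/p^n)$; naturality of $\tau$ makes this family compatible with the transition maps in $n$ (induced by the projections $S_0/p^{n+1}\to S_0/p^n$) and in $S_0$ (induced by the inclusions $S_0\subset S_0'$). Now two standard facts apply: inverse limits commute with finite products, and the open bounded subalgebras of $S^+$ form a directed system, so the colimit over them is filtered and likewise commutes with finite products. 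These give a canonical identification
\[
\prod_{i=1}^r T(\CF_i)(S,S^+)\cong\dirlim_{S_0}\invlim_n\Big(\prod_{i=1}^r\CF_i(S_0/p^n)\Big),
\]
through which the compatible family of multilinear maps induces a multilinear map to $T(\CG)(S,S^+)$. These are natural in $(S,S^+)$ since $\tau$ is, and the assignment $\tau\mapsto T(\tau)$ is functorial in every argument because evaluation, inverse limit and filtered colimit all are; composing with the sheafification lemma yields the desired homomorphism on $\Mult$.

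The alternating and symmetric cases run identically: if $\tau$ is alternating (resp.\ symmetric) then so is each evaluation $\tau_{S_0/p^n}$, and these properties are preserved under the componentwise passage to inverse and filtered direct limits, after which one invokes the alternating (resp.\ symmetric) form of the sheafification lemma. I expect the only delicate point to be confirming that \emph{multilinearity}, and not merely additivity, survives the combined limit $\dirlim_{S_0}\invlim_n$; this is precisely where the commutation of both inverse limits and filtered colimits with finite products is used, and once the product of the $T(\CF_i)(S,S^+)$ is identified with $\dirlim_{S_0}\invlim_n\prod_i\CF_i(S_0/p^n)$, linearity in each variable can be checked componentwise and descends at once.
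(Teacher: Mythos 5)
Your proposal is correct and follows essentially the same route as the paper: reduce via the sheafification lemma to a presheaf-level statement, evaluate the multilinear morphism at the quotients $S_{0}/p^{n}$, pass to inverse limits and then to the colimit over open bounded subalgebras, using that inverse limits commute with finite products and that the system of open bounded sub-$R^{+}$-algebras is filtered (the paper cites \cite[Proposition 2.2.2 (i)]{MR3272049} for this filteredness, which is the one fact you assert without reference). The alternating and symmetric cases are handled in both arguments by the same observation that these properties persist through the limits.
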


\begin{proof}
As usual, we will only prove the first statement. Using the above proposition, and by construction of the adic generic fiber functor (Definition \ref{DefAdGenFibFun}), we only have to show that there is a canonical and functorial homomorphism 
\[\Mult(\CF_{1}\times\dots\times\CF_{r},\CG)\to \Mult(\CF^{\flat}_{1}\times\dots\times\CF^{\flat}_{r},\CG^{\flat})\]where here (and only here!) we denote by $\CG^{\flat}$ (and similarly for other terms) the pre-sheaf on $\text{CAff}_{(R,R^{+})}^{\rm op}$ that sends $(S,S^{+})$ to $$\dirlim_{S_{0}\subset S^{+}}\invlim_{n}\CG(S_{0}/p^{n})$$ where the direct limit runs over all open and bounded sub-$R^{+}$-algebras $S_{0}$ of $S^{+}$. So, let $\phi:\CF_{1}\times\dots\times\CF_{r}\to\CG$ be a multilinear morphism. Fix an open and bounded subrings $S_{0}$ of $S^{+}$. For any $n\geq 1$, we have a commutative diagram
\[\xymatrix{\CF_{1}(S_{0}/p^{n+1})\times\dots\times\CF_{r}(S_{0}/p^{n+1})\ar[rr]^{\qquad\phi}\ar[d]&&\CG(S_{0}/p^{n+1})\ar[d]\\
\CF_{1}(S_{0}/p^{n})\times\dots\times\CF_{r}(S_{0}/p^{n})\ar[rr]_{\qquad\phi}&&\CG(S_{0}/p^{n})}\]
and so, we have a multilinear morphism \[\invlim_{n}\CF_{1}(S_{0}/p^{n})\times\dots\times\invlim_{n}\CF_{r}(S_{0}/p^{n})\to\invlim_{n}\CG(S_{0}/p^{n})\]Now, if $S'_{0}\subset S^{+}$ is another open bounded sub-$R^{+}$-algebra containing $S_{0}$, then we have a commutative diagram

\[\xymatrix{\invlim_{n}\CF_{1}(S'_{0}/p^{n})\times\dots\times\invlim_{n}\CF_{r}(S'_{0}/p^{n})\ar[rr]^{\qquad\phi}\ar[d]&&\invlim_{n}\CG(S'_{0}/p^{n})\ar[d]\\
\invlim_{n}\CF_{1}(S_{0}/p^{n})\times\dots\times\invlim_{n}\CF_{r}(S_{0}/p^{n})\ar[rr]_{\qquad\phi}&&\invlim_{n}\CG(S_{0}/p^{n})}\]
Therefore, we have a multilinear morphism
\begin{align}
\label{EqDirInvLimFlat}
\dirlim_{S_{0}\subset S^{+}}\big(\invlim_{n}\CF_{1}(S_{0}/p^{n})\times\dots\times\invlim_{n}\CF_{r}(S_{0}/p^{n})\big)\to \dirlim_{S_{0}\subset S^{+}}\invlim\CG(S_{0}/p^{n})
\end{align}
Since by \cite[Proposition 2.2.2 (i)]{MR3272049} the open bounded sub-$R^{+}$-algebras of $S^{+}$ are filtered, we can distribute the direct limit into the product, i.e., we have a canonical isomorphism 
\[\dirlim_{S_{0}\subset S^{+}}\big(\invlim_{n}\CF_{1}(S_{0}/p^{n})\times\dots\times\invlim_{n}\CF_{r}(S_{0}/p^{n})\big)\cong\] \[\dirlim_{S_{0}\subset S^{+}}\invlim_{n}\CF_{1}(S_{0}/p^{n})\times\dots\times\dirlim_{R_{0}\subset R^{+}}\invlim_{n}\CF_{r}(S_{0}/p^{n})\]
Composing this isomorphism with (\ref{EqDirInvLimFlat}), we obtain the desired multilinear morphism 
\[\CF_{1}^{\flat}(S,S^{+})\times\dots\times\CF_{r}^{\flat}(S,S^{+})\to\CG^{\flat}(S,S^{+})\]
\end{proof}

\begin{cor}
Let $k$ be a perfect field of characteristic $p$, and $(R,R^{+})$ a complete affinoid $\big(W(k)[1/p],W(k)\big)$-algebra. Let $G,G_{0},\dots,G_{r}$ be $p$-divisible groups over some open and bounded subring of $R^{+}$. There are canonical homomorphisms, functorial in all arguments:
\begin{align}
\Mult\big(G_{1}\times\dots\times G_{r},G_{0}\big)&\to \Mult_{\BQ_{p}}\big(T_{p}(G_{1})_{\eta}^{\rm ad}\times\dots\times T_{p}(G_{r})_{\eta}^{\rm ad},T_{p}(G_{0})_{\eta}^{\rm ad}\big)\\
\label{AltGenFibTate}\Alt\big(G^{\times r},G_{0}\big)&\to\Alt_{\BQ_{p}}\big(T_{p}(G)_{\eta}^{\rm{ad}, \times \mathit{r}},T_{p}(G_{0})_{\eta}^{\rm ad}\big)\\
\Sym\big(G^{\times r},G_{0}\big)&\to\Sym_{\BQ_{p}}\big(T_{p}(G)_{\eta}^{\rm{ad}, \times \mathit{r}},T_{p}(G_{0})_{\eta}^{\rm ad}\big)
\end{align}
\end{cor}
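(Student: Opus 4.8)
The plan is to build the three homomorphisms as composites of two constructions already at our disposal. Given a multilinear morphism $\phi\colon G_1\times\dots\times G_r\to G_0$ of $p$-divisible groups over an open and bounded subring $R_0\subset R^+$, I would first apply Lemma \ref{LemIsoMultTate} to obtain the induced $\BZ_p$-multilinear morphism $T_p(G_1)\times\dots\times T_p(G_r)\to T_p(G_0)$ of Tate modules, regarded as sheaves of $\BZ_p$-modules on $\text{Nilp}_{R_0}^{\rm op}$ and then pulled back to $\text{Nilp}_{R^+}^{\rm op}$ along $R_0\into R^+$. Forgetting the $\BZ_p$-structure, this is in particular a multilinear morphism of the underlying sheaves of abelian groups, so Proposition \ref{PropAdGenFibPresMult} applies and produces a multilinear morphism of the adic generic fibers $T_p(G_1)_\eta^{\rm ad}\times\dots\times T_p(G_r)_\eta^{\rm ad}\to T_p(G_0)_\eta^{\rm ad}$. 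The composite of these two functorial assignments is the first arrow; running the same argument through the alternating and symmetric variants of Lemma \ref{LemIsoMultTate} and Proposition \ref{PropAdGenFibPresMult} yields the other two.

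Functoriality in all arguments is then automatic, since it holds for each of the two constituent constructions. It remains to pin down the coefficients: the construction of Proposition \ref{PropAdGenFibPresMult} is assembled additively and naturally from the groups $\invlim_n\CF(S_0/p^n)$, so it transports the $\BZ_p$-action supplied by Lemma \ref{LemIsoMultTate} to the adic generic fiber, and the induced morphism is at least $\BZ_p$-multilinear. To promote this to $\BQ_p$-multilinearity I would use that $R$ is a $W(k)[1/p]$-algebra, so that the entire picture lives over $\BQ_p$, together with the identification $\tilde G\cong T_p(G)[1/p]$ of part (3) of Proposition \ref{PropPropUnivBTGrp}; after inverting $p$ the present construction must coincide, via the compatibility recorded in Lemma \ref{LemCommDiagTateUnivCoverMultMor}, with the construction of Lemma \ref{LemMutlMorUnivCover} on universal covers, which is $\BQ_p$-multilinear by definition.

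The step I expect to be delicate is exactly this last coefficient bookkeeping: the sections of the integral object $T_p(G)_\eta^{\rm ad}$ are a priori only $\BZ_p$-modules, so one must say carefully in what sense $\BQ_p$-multilinear morphisms between them are meant and why the induced morphisms are of this kind. The cleanest route is to factor everything through the embedding $T_p(G)\into\tilde G$ and the equality $\tilde G=T_p(G)[1/p]$, reducing the claim to the already-established $\BQ_p$-linear statement for universal covers and invoking Lemma \ref{LemCommDiagTateUnivCoverMultMor} to guarantee that the two constructions agree after inverting $p$. Once this identification is in place, the alternating and symmetric cases require no extra work, since both input constructions preserve the alternating and symmetric conditions.
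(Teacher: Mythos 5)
Your proposal is correct and takes essentially the same route as the paper: the paper's proof constructs each homomorphism precisely as the composite of (\ref{EqIsoMultTate}) from Lemma \ref{LemIsoMultTate} with (\ref{EqAdGenFibPresMult}) from Proposition \ref{PropAdGenFibPresMult}, handling the alternating and symmetric cases identically. Your extra bookkeeping about $\BZ_{p}$- versus $\BQ_{p}$-coefficients addresses a point the paper silently passes over, but it does not alter the construction itself.
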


\begin{proof}
The first homomorphism is the composition of homomorphisms (\ref{EqIsoMultTate}) and (\ref{EqAdGenFibPresMult}). The other two are given similarly.
\end{proof}

\begin{cor}
Let $k$ be a perfect field of characteristic $p$, and $(R,R^{+})$ a complete affinoid $\big(W(k)[1/p],W(k)\big)$-algebra. Let  $G,G_{0},\dots,G_{r}$ be $p$-divisible groups over some open and bounded subring of $R^{+}$. There are canonical homomorphisms, functorial in all arguments:
\begin{align}
\Mult\big(G_{1}\times\dots\times G_{r},G_{0}\big)&\to \Mult_{\BQ_{p}}\big(\tilde{G}_{1,\eta}^{\rm ad}\times\dots\times \tilde{G}_{r,\eta}^{\rm ad},\tilde{G}_{0,\eta}^{\rm ad}\big)\\
\label{AltUnivCov}\Alt\big(G^{\times r},G_{0}\big)&\to\Alt_{\BQ_{p}}\big(\tilde{G}_{\eta}^{\rm{ad}, \times \mathit{r}},\tilde{G}_{0,\eta}^{\rm ad}\big)\\
\Sym\big(G^{\times r},G_{0}\big)&\to\Sym_{\BQ_{p}}\big(\tilde{G}_{\eta}^{\rm{ad}, \times \mathit{r}},\tilde{G}_{0,\eta}^{\rm ad}\big)
\end{align}
\end{cor}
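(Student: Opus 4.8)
The plan is to obtain all three homomorphisms as formal compositions, mirroring the proof of the previous corollary but with the universal cover in place of the Tate module throughout. Regarding the $G_i$ as $p$-divisible groups over $R^+$ via base change along the structure morphism $S_0\to R^+$ of the open and bounded subring $S_0$ over which they are defined (which is harmless, since $p$ is topologically nilpotent in $R^+$ and the formation of universal covers commutes with base change), we may view each $\tilde{G}_i$ as a sheaf of $\BQ_p$-vector spaces on $\text{Nilp}_{R^+}^{\rm op}$.

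First I would apply Lemma \ref{LemMutlMorUnivCover}, with the ambient ring there taken to be $R^+$ so that the relevant site is $\text{Nilp}_{R^+}^{\rm op}$, to obtain the canonical, functorial homomorphism (\ref{EqMutlMorUnivCover})
$$\Mult(G_1\times\dots\times G_r,G_0)\to\Mult_{\BQ_p}(\tilde{G}_1\times\dots\times\tilde{G}_r,\tilde{G}_0).$$
Next I would feed these universal covers into Proposition \ref{PropAdGenFibPresMult}, taking $\CF_i:=\tilde{G}_i$ and $\CG:=\tilde{G}_0$; its homomorphism (\ref{EqAdGenFibPresMult}) then produces
$$\Mult_{\BQ_p}(\tilde{G}_1\times\dots\times\tilde{G}_r,\tilde{G}_0)\to\Mult_{\BQ_p}(\tilde{G}_{1,\eta}^{\rm ad}\times\dots\times\tilde{G}_{r,\eta}^{\rm ad},\tilde{G}_{0,\eta}^{\rm ad}).$$
Composing the two displays yields the first asserted homomorphism. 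The alternating and symmetric homomorphisms are constructed identically, starting from the $\Alt$- and $\Sym$-versions of Lemma \ref{LemMutlMorUnivCover} (for instance (\ref{EqAltUnivCov})) and applying the corresponding versions in Proposition \ref{PropAdGenFibPresMult}. Functoriality in all arguments is inherited from the functoriality of the two ingredient homomorphisms.

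Since this is a plain composition of already-established results, I expect no essential difficulty, and the only points requiring care to be bookkeeping. One must check that the $\tilde{G}_i$ are indeed sheaves on $\text{Nilp}_{R^+}^{\rm op}$ of the kind to which Proposition \ref{PropAdGenFibPresMult} applies — which is precisely their defining property as $\BQ_p$-vector space sheaves — and that the output of (\ref{EqAdGenFibPresMult}), a priori only additive in each variable, is genuinely $\BQ_p$-multilinear. The latter holds because the adic generic fiber functor is $\BQ_p$-linear, being built from inverse limits, filtered direct limits, and sheafification, all of which respect the $\BQ_p$-action; thus scalar multiplication by $\BQ_p$ is transported through the construction. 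Finally, one may record, via Lemma \ref{LemCommDiagTateUnivCoverMultMor} together with the identification $\tilde{G}\cong T_p(G)[1/p]$ of Proposition \ref{PropPropUnivBTGrp}(3), that the homomorphism thus obtained is compatible with the one of the previous corollary, although this compatibility is not needed for the present statement.
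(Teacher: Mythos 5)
Your proposal is correct and is essentially the paper's own proof: the paper likewise obtains the first homomorphism as the composition of (\ref{EqMutlMorUnivCover}) from Lemma \ref{LemMutlMorUnivCover} with (\ref{EqAdGenFibPresMult}) from Proposition \ref{PropAdGenFibPresMult}, and treats the alternating and symmetric cases in the same way. The extra bookkeeping you record (base change to $R^{+}$, $\BQ_{p}$-linearity, compatibility with the Tate-module corollary) is sound but not part of the paper's argument, which states the composition and stops.
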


\begin{proof}
The first homomorphism is the composition of homomorphisms (\ref{EqMutlMorUnivCover}) and (\ref{EqAdGenFibPresMult}). The other two are given similarly.
\end{proof}

\begin{prop}
\label{PropExtPowUnivDrinLev}
Let $k$ be a perfect field of characteristic $p$, and $(R,R^{+})$ a complete affinoid $\big(W(k)[1/p],W(k)\big)$-algebra. Let $G$ a $p$-divisible group of height $h$ and dimension at most $1$ over some open and bounded subring of $R^{+}$. There are canonical homomorphisms 
\begin{align}
\label{ExtPowTateDrinLev}
\mathscr{T}^{r}_{G,(R,R^{+})}:\ep^{r}_{\BZ_{p}}\big(T_{p}({G})_{\eta}^{\rm ad}(R,R^{+})\big)\to T_{p}({\ep^{r}G})_{\eta}^{\rm ad}(R,R^{+})
\end{align}

and

\begin{align}
\label{ExtPowUnivDrinLev}
\mathscr{L}^{r}_{G,(R,R^{+})}:\ep^{r}_{\BQ_{p}}\big(\tilde{G}_{\eta}^{\rm ad}(R,R^{+})\big)\to \widetilde{\ep^{r}G}_{\eta}^{\rm ad}(R,R^{+})
\end{align}
Furthermore, the following diagram, given by the canonical embedding of the Tate module into the universal cover, is commutative
\begin{align}
\label{CommDiagTateUnivCov}
\xymatrix{\ep^{r}\big(T_p(G)^{\rm ad}_{\eta}(R,R^{+})\big)\ar[d]_{\mathscr{T}^{r}_{G,(R,R^{+})}}\ar[r]&\ep^{r}\big(\tilde{G}_{\eta}^{\rm ad}(R,R^{+})\big)\ar[d]^{\mathscr{L}^{r}_{G,(R,R^{+})}}\\
T_{p}(\ep^{r}G)^{\rm ad}_{\eta}(R,R^{+})\ar[r]&\widetilde{\ep^{r}G}_{\eta}^{\rm ad}(R,R^{+})}\end{align}
\end{prop}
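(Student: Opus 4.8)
The plan is to derive both homomorphisms from the single universal alternating morphism $\lambda_{G}:G^{\times r}\to\ep^{r}G$ furnished by part (5) of Theorem \ref{MainThmHed}, and then to factor the resulting alternating maps through the appropriate exterior powers. First I would construct $\mathscr{T}^{r}$: viewing $\lambda_{G}$ as an element of $\Alt(G^{\times r},\ep^{r}G)$ and feeding it into homomorphism (\ref{AltGenFibTate}) with $G_{0}=\ep^{r}G$, one obtains an alternating morphism of sheaves $\big(T_{p}(G)^{\rm ad}_{\eta}\big)^{\times r}\to T_{p}(\ep^{r}G)^{\rm ad}_{\eta}$. Evaluating at $(R,R^{+})$ gives an alternating $\BZ_{p}$-multilinear map of $\BZ_{p}$-modules, and the universal property of the exterior power $\ep^{r}_{\BZ_{p}}$ produces the unique $\BZ_{p}$-linear factorization $\mathscr{T}^{r}_{G,(R,R^{+})}$.

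The construction of $\mathscr{L}^{r}$ is entirely parallel: I would feed the same $\lambda_{G}$ into homomorphism (\ref{AltUnivCov}) to obtain an alternating morphism $\big(\tilde{G}^{\rm ad}_{\eta}\big)^{\times r}\to\widetilde{\ep^{r}G}^{\rm ad}_{\eta}$, evaluate at $(R,R^{+})$, and invoke the universal property of $\ep^{r}_{\BQ_{p}}$ to get $\mathscr{L}^{r}_{G,(R,R^{+})}$. Both maps are canonical because every step (the choice of $\lambda_{G}$, the corollaries (\ref{AltGenFibTate}) and (\ref{AltUnivCov}), and the factorization through the exterior power) is canonical and functorial in all arguments.

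For the commutativity of diagram (\ref{CommDiagTateUnivCov}), the essential input is Lemma \ref{LemCommDiagTateUnivCoverMultMor}: it says that the alternating morphism on universal covers attached to $\lambda_{G}$ is obtained from the alternating morphism on Tate modules by composing with the canonical embedding $T_{p}\into\tilde{G}$. Applying the adic generic fiber functor via Proposition \ref{PropAdGenFibPresMult} transports this compatibility to the adic generic fibers, so that the alternating morphism $\big(\tilde{G}^{\rm ad}_{\eta}\big)^{\times r}\to\widetilde{\ep^{r}G}^{\rm ad}_{\eta}$ restricts, along the embeddings $T_{p}(G)^{\rm ad}_{\eta}\into\tilde{G}^{\rm ad}_{\eta}$ and $T_{p}(\ep^{r}G)^{\rm ad}_{\eta}\into\widetilde{\ep^{r}G}^{\rm ad}_{\eta}$, to the alternating morphism on Tate modules. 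Since the horizontal arrows of the square are exactly the maps induced on exterior powers by these embeddings (together with the base change $\BZ_{p}\to\BQ_{p}$), and $\mathscr{T}^{r}$ and $\mathscr{L}^{r}$ are the unique factorizations of the two alternating morphisms, the square commutes by the uniqueness clause in the universal property of the exterior power.

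The hard part will not be any single deep step but rather the bookkeeping of compatibilities: one must check that Proposition \ref{PropAdGenFibPresMult} genuinely transports the commutative triangle of Lemma \ref{LemCommDiagTateUnivCoverMultMor} \emph{as a whole}, rather than merely each of its edges in isolation. Concretely, this amounts to verifying that homomorphisms (\ref{AltGenFibTate}) and (\ref{AltUnivCov}) are intertwined by the maps induced on adic generic fibers by the embedding $T_{p}\into\tilde{G}$, which in turn rests on the functoriality in all arguments of the underlying corollaries and on the naturality of that embedding under $(\_)^{\rm ad}_{\eta}$.
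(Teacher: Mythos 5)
Your proposal is correct and follows essentially the same route as the paper's proof: both construct $\mathscr{T}^{r}$ and $\mathscr{L}^{r}$ by pushing the universal alternating morphism $\lambda_{G}$ of Theorem \ref{MainThmHed}(5) through homomorphisms (\ref{AltGenFibTate}) and (\ref{AltUnivCov}) respectively, evaluating at $(R,R^{+})$, and factoring through the exterior power, and both deduce commutativity of (\ref{CommDiagTateUnivCov}) from Lemma \ref{LemCommDiagTateUnivCoverMultMor} together with the functoriality in Proposition \ref{PropAdGenFibPresMult}. Your write-up is somewhat more explicit than the paper's (in particular about the factorization via the universal property of $\ep^{r}$ and the uniqueness argument for the square), but the decomposition and the key inputs are identical.
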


\begin{proof}
By Theorem \ref{MainThmHed}, the exterior power $\ep^{r}G$ exists. If in (\ref{AltGenFibTate}) we replace $G_{0}$ with $\ep^{r}G$, the image of the universal alternating morphism $\lambda_{G}:G^{\times r}\to \ep^{r}G$ (\ref{UnivAltMorPDivGrp}) yields an alternating morphism 
\begin{align}
\label{EqUnivAltMorTateAdGenFib}
\big(T_{p}(G)_{\eta}^{\ad}\big)^{\times r}\to T_{p}(\ep^{r}G)_{\eta}^{\ad}
\end{align}
which on evaluating at $(R,R^{+})$ gives the desired homomorphism $\mathscr{T}^{r}_{G,(R,R^{+})}$. Homomorphism $\mathscr{L}^{r}_{G,(R,R^{+})}$ is given similarly, using (\ref{AltUnivCov}).\\

Commutativity of the square follows from Lemma \ref{LemCommDiagTateUnivCoverMultMor} and the functoriality of the homomorphisms in Proposition \ref{PropAdGenFibPresMult}.
\end{proof}

\begin{rem}
\label{RemTateModIsoField}
For a complete affinoid field $(K,K^{+})$ over $(W(k)[1/p],W(k))$, 
\[\mathscr{T}^{r}_{G,(K,K^{+})}:\ep^{r}_{\BZ_{p}}\big(T_{p}({G})_{\eta}^{\rm ad}(K,K^{+})\big)\to T_{p}({\ep^{r}G})_{\eta}^{\rm ad}(K,K^{+})
\]
is an isomorphism.\xqed{\lozenge}
\end{rem}




\subsection{The wedge morphism on the Lubin-Tate tower}

In this subsection, we use exterior powers of $p$-divisible groups and the results from last subsection to construct a morphism from the Lubin-Tate space at infinity to certain Rapoport-Zink spaces at infinity. Fix a perfect field $k$ of characteristic $p$ and a connected $p$-divisible group $H$ over $k$ of dimension 1 and height $h$.\\

Recall the following definition:

\begin{dfn}
Let $\BX$ be a $p$-divisible group over $k$. Define a functor 
$$\Def_{\BX}:\text{Nilp}_{W(k)}\to \Ens$$ by sending $R$ to the set of \emph{deformations} of $\BX$ to $R$, i.e., the set of isomorphism classes of pairs $(G,\rho)$, where $G$ is a $p$-divisible group over $R$ and 
\[\rho:\BX\times_{k}R/p\to G\times_{R}R/p\] is a quasi-isgoney.\xqed{\blacktriangle}
\end{dfn}

Then we have the following theorem of Rapoprt and Zink (\cite[Theorem 3.25]{MR1393439}):

\begin{thm}
The functor $\Def_{\BX}$ is representable by a formal scheme $\CM_{\BX}$ over $\Spf W(k)$, which locally admits a finitely generated ideal of definition.
\end{thm}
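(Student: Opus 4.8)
The plan is to assemble $\CM_{\BX}$ from two essentially independent inputs: Grothendieck--Messing deformation theory, which governs deformations of a $p$-divisible group and supplies the local formal-scheme structure, and the rigidity of quasi-isogenies (Drinfeld), which shows that the datum $\rho$ carries no infinitesimal information and therefore only indexes the points of the special fibre. Throughout one works over $\Spf W(k)$ on the category $\text{Nilp}_{W(k)}$.

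First I would describe the reduction. A $k$-valued point of $\Def_{\BX}$ is a pair $(G_{0},\rho_{0})$ with $G_{0}$ a $p$-divisible group over $k$ and $\rho_{0}\colon\BX\to G_{0}$ a quasi-isogeny; by Dieudonné theory these correspond to the $W(k)$-lattices in the isocrystal $\BD(\BX)[1/p]$ that are stable under $F$ and $V$, so the underlying set is discrete and splits, according to the height of $\rho_{0}$, into pieces that are individually of finite type. Over such a point I would invoke rigidity: a quasi-isogeny over $R/p$, or over any nilpotent thickening, lifts uniquely, so $\innHom(\BX,G)[1/p]$ is formally étale and every infinitesimal deformation of $G_{0}$ carries a canonical lift of $\rho_{0}$. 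Hence the deformation functor of the pair $(G_{0},\rho_{0})$ coincides with that of $G_{0}$ alone, which by Grothendieck--Messing is unobstructed: deformations to a thickening $R$ are in bijection with lifts of the Hodge filtration inside the Dieudonné crystal evaluated on $R$. This identifies the complete local deformation ring with a power series ring over $W(k)$ and, more globally, realises each formal neighbourhood as a formally smooth piece cut out inside a Grassmannian $\Grass$ by the lifting-of-filtration problem; in particular each completion is Noetherian and admits a finitely generated ideal of definition.

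The main obstacle is the global step: assembling these possibly infinitely many formally smooth neighbourhoods into a single formal scheme, rather than a disjoint union of formal spectra of complete local rings, and checking that the result \emph{locally} admits a finitely generated ideal of definition. Here I would use the Grothendieck--Messing period morphism to the Grassmannian together with the height stratification of $\rho$: each height stratum is an open-and-closed condition mapping to a quasi-projective target, over which the filtration-lifting problem is representable by a formal scheme of finite type, and rigidity guarantees that these representing objects glue compatibly as the quasi-isogeny varies. The delicate point --- the reason one cannot simply quote abstract deformation theory --- is controlling the non-quasi-compact direction coming from the unbounded height of $\rho$, so that one lands on a genuine formal scheme that is only locally, and not globally, of finite type over $\Spf W(k)$, which is precisely the finiteness clause in the statement. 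I would therefore expect the algebraisation and gluing, rather than the pointwise deformation computation, to be where the real work lies.
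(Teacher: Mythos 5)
First, a point of comparison: the paper does not prove this statement at all --- it is quoted as Theorem 3.25 of Rapoport--Zink \cite{MR1393439} --- so your proposal can only be measured against that proof. Your local analysis agrees with the standard (and Rapoport--Zink's) one and is correct as far as it goes: rigidity of quasi-isogenies makes $\rho$ deform uniquely through nilpotent thickenings, so deforming the pair $(G_{0},\rho_{0})$ is the same as deforming $G_{0}$ alone, and Grothendieck--Messing theory makes that problem unobstructed, with completed local rings that are power series rings over $W(k)$.

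The genuine gap is in the global step, and it is not merely deferred --- the mechanism you propose would fail. You want to glue along ``the Grothendieck--Messing period morphism to the Grassmannian,'' but no such morphism exists over $\text{Nilp}_{W(k)}$: for $S$ with $p$ nilpotent, the quasi-isogeny identifies only the \emph{rational} crystals $\BD(\BX)_{S}[1/p]\cong\BD(G)_{S}[1/p]$, and since $p$ is nilpotent in $\CO_{S}$ we have $\CO_{S}[1/p]=0$; the Hodge filtration of a deformation is an integral object and cannot be transported into the fixed space $\BD(\BX)[1/p]$. The period morphism exists only after passing to the rigid/adic generic fiber (exactly as in Lemma \ref{LemPerMorFact} of this paper), which is of no use for constructing the formal scheme over $\Spf W(k)$ that one is trying to build. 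Moreover, your supporting finiteness claim --- that the fixed-height pieces are ``individually of finite type'' --- is false in general: the reduced locus of fixed height is only \emph{locally} of finite type and typically has infinitely many irreducible components (already in the Drinfeld case, or for $\BX$ isoclinic of slope $1/2$ and height $4$), so the open-and-closed height decomposition buys no quasi-compactness.

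What the actual proof does instead is bound the quasi-isogeny rather than stratify by its height: one considers the subfunctors $\CM_{N}$ on which $p^{n}\rho$ is an isogeny with kernel killed by a fixed power of $p$. Such a pair $(G,\rho)$ is equivalent to the datum of the finite flat subgroup scheme $\ker(p^{n}\rho)\subset\BX[p^{m}]_{S}$, so $\CM_{N}$ is representable by the $p$-adic completion of a projective $W(k)$-scheme (a closed subscheme of a Grassmannian of the Hopf algebra of $\BX[p^{m}]$), and $\CM_{\BX}$ is the increasing union of the $\CM_{N}$ along closed immersions. The real content of the theorem --- precisely the clause ``locally admits a finitely generated ideal of definition'' --- is to show that this ind-object is honestly a formal scheme: Zariski-locally on the underlying space the family stabilizes, so that near any point the ideals cutting $\CM_{N}$ out of $\CM_{N'}$ for $N'\geq N$ are topologically nilpotent and $p$ together with finitely many such elements generates an ideal of definition. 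Rigidity controls formal completions at points, not Zariski neighborhoods, so your argument contains no substitute for this step; it is where the theorem actually lives.
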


Now, recall the definition of the Rapoprt-Zink spaces at infinity:

\begin{dfn}
Let $\BX$ be a $p$-divisible group over $k$, of height $h$. Consider the functor 
$\CM_{\BX}^{\infty}$ on complete affinoid $(W(k)[1/p],W(k))$-algebras, sending $(R,R^{+})$ to the set of isomorphism classes of triples $(G,\rho,\alpha)$, where $(G,\rho)\in \CM_{\BX,\eta}^{\rm ad}(R,R^{+})$ and \[\alpha:\BZ_{p}^{h}\to T_{p}(G)_{\eta}^{\rm ad}(R,R^{}+)\] is a morphism of $\BZ_{p}$-modules such that for all points $x=\Spa(K,K^{+})\in \Spa(R,R^{+})$, the induced morphism 
\[\alpha(x):\BZ_{p}^{h}\to T_{p}(G)_{\eta}^{\rm ad}(K,K^{+})\] is an isomorphism. When $\BX$ has dimension $1$, $\CM_{\BX}^{\infty}$ is called the \emph{Lubin-Tate space at infinity}.\xqed{\blacktriangle}
\end{dfn}

We have the following theorem:

\begin{thm}[\cite{MR3272049}, Theorem 6.3.4.]
The functor $\CM_{\BX}^{\infty}$ is representable by an adic space over $\Spa(W(k)[1/p], W(k))$, and moreover, it is preperfectoid.
\end{thm}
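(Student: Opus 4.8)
The plan is to realize $\CM_{\BX}^{\infty}$ as an inverse limit of finite-level adic spaces over the adic generic fiber of the Rapoport-Zink formal scheme, and then to deduce the preperfectoid property by comparing the tower, in the limit, with the $h$-fold self-product of the adic generic fiber of the universal cover $\tilde{\BX}$, which is perfectoid. Concretely, I would start from the Rapoport-Zink formal scheme $\CM_{\BX}$ representing $\Def_{\BX}$ and form its adic generic fiber $\CM_{\BX,\eta}^{\rm ad}$ (Definition \ref{DefAdGenFibFun}), a locally spatial adic space over $\Spa(W(k)[1/p],W(k))$ carrying the universal pair $(G,\rho)$. For each $n$ I would let $\CM_{\BX}^{n}$ classify triples $(G,\rho,\alpha_{n})$ with $\alpha_{n}$ a Drinfeld level-$n$ structure. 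On the generic fiber $G[p^{n}]$ is \'etale, so the level-$n$ structures form a finite \'etale cover $\CM_{\BX}^{n}\to\CM_{\BX,\eta}^{\rm ad}$; in particular each $\CM_{\BX}^{n}$ is an adic space and the transition maps $\CM_{\BX}^{n+1}\to\CM_{\BX}^{n}$ are finite \'etale.

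Next I would invoke the criterion of \cite{MR3272049} for representing inverse limits of adic spaces along finite \'etale towers to produce an adic space $\CM_{\BX}^{\infty}$ with $\CM_{\BX}^{\infty}\sim\invlim_{n}\CM_{\BX}^{n}$: locally one picks an affinoid $\Spa(A_{n},A_{n}^{+})$ at finite level, notes that affinoids pull back to affinoids along a finite \'etale tower, and sets $A_{\infty}$ to be the completion of $\dirlim_{n}A_{n}$. One then checks that this limit space represents the functor of the statement, i.e. that a point $(G,\rho,\alpha)$ with a full trivialization $\alpha:\BZ_{p}^{h}\to T_{p}(G)_{\eta}^{\rm ad}$ is the same datum as a compatible system of Drinfeld structures, so that $\CM_{\BX}^{\infty}=\invlim_{n}\CM_{\BX}^{n}$ on complete affinoid test objects.

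The substance is the preperfectoid property. Composing $\alpha$ with the canonical embedding $T_{p}(G)\hookrightarrow\tilde{G}$ and using that the quasi-isogeny $\rho$ induces an isomorphism $\tilde{G}\cong\tilde{\BX}$ of universal covers (quasi-isogenies are invertible after inverting $p$), I would build a map $\CM_{\BX}^{\infty}\to(\tilde{\BX}_{\eta}^{\rm ad})^{\times h}$. The target is perfectoid: in the connected case $\tilde{\BX}$ is represented by $\Spf W(k)\lbb T_{1}^{1/p^{\infty}},\dots,T_{d}^{1/p^{\infty}}\rbb$, whose adic generic fiber carries all $p$-power roots of the coordinates and is therefore perfectoid, and the general case reduces to this together with the \'etale part. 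It then remains to identify $\CM_{\BX}^{\infty}$ with the locus inside this perfectoid product cut out by the integrality condition forcing $\image(\alpha)$ to be a $\BZ_{p}$-lattice (a Tate module) and by the Grothendieck-Messing admissibility condition; using the identification $\tilde{G}_{\eta}^{\rm ad}(S,S^{+})\cong\big(\BD(G)\otimes_{W(k)}B^{+}_{\rm cris}(S^{+}/p)\big)^{F=p}$ of Proposition \ref{PropPropUnivBTGrp}, these become closed and rational conditions, and closed subspaces and rational subsets of perfectoid spaces remain perfectoid.

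The main obstacle will be this final step: proving that the Hodge-theoretic locus really is a preperfectoid subspace of $(\tilde{\BX}_{\eta}^{\rm ad})^{\times h}$ and that it represents the functor on \emph{all} complete affinoid algebras, not only perfectoid ones. This requires the classification of $p$-divisible groups over perfectoid (equivalently over $\CO_{\BC}$) rings in terms of their Tate modules together with a Hodge-Tate filtration, in order to translate the moduli condition into the two explicit $p$-adic Hodge theoretic conditions, and it requires a descent argument ensuring that a compatible system of deformations with level structure arises from a single object over the limit. Once perfectoidness is established over a cofinal family of perfectoid test objects, the preperfectoid property over $\Spa(W(k)[1/p],W(k))$ follows formally.
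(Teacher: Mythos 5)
A preliminary remark on the comparison itself: the paper offers no proof of this statement --- it is imported verbatim from \cite{MR3272049} (Theorem 6.3.4), and the only ingredient of that proof which the paper later records is the $p$-adic Hodge-theoretic description of the infinite-level space, restated as Proposition \ref{ThmSchWeinPAdicHdgRZ}. So your proposal can only be measured against the proof in \emph{loc.\ cit.}, and in outline it does reproduce that proof's architecture: the finite-level covers $\CM^{n}_{\BX}\to\CM^{\rm ad}_{\BX,\eta}$, finite \'etale because $G[p^{n}]$ is \'etale on the generic fiber; the map to $\big(\tilde{\BX}^{\rm ad}_{\eta}\big)^{\times h}$ built from $\alpha$, the inclusion $T_{p}(G)\into\tilde{G}$, and the invariance of universal covers under quasi-isogeny; the perfectoid nature of the universal cover after base change, coming from $\Spf W(k)\lbb T_{1}^{1/p^{\infty}},\dots,T_{d}^{1/p^{\infty}}\rbb$ in the connected case; and the identification of the image by the two Hodge-theoretic conditions, whose hard direction rests on the classification of $p$-divisible groups over $\CO_{\mathbf{C}}$. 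You also locate correctly where the real work lies.

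There is, however, a genuine logical gap in your ordering. In your second step you ``invoke the criterion of \cite{MR3272049} for representing inverse limits of adic spaces along finite \'etale towers to produce an adic space $\CM^{\infty}_{\BX}$.'' No such unconditional criterion exists, and this is exactly the delicate point: for a tower of finite \'etale maps of affinoids, the completed direct limit $A_{\infty}=(\dirlim_{n}A_{n})^{\wedge}$ has no a priori reason to be sheafy, so $\Spa(A_{\infty},A_{\infty}^{+})$ need not be an adic space. The relation $X\sim\invlim_{n}X_{n}$ of \cite{MR3272049} is a \emph{property} comparing an already-constructed space with a tower, not a construction. Sheafiness is precisely what preperfectoidness delivers (after base change to a perfectoid field the rings become perfectoid, hence sheafy), and in \cite{MR3272049} representability and the preperfectoid property are accordingly established \emph{together}, via the locally closed embedding into $\big(\tilde{\BX}^{\rm ad}_{\eta}\big)^{\times h}$ and the description by conditions (i) and (ii) of Proposition \ref{ThmSchWeinPAdicHdgRZ}; the statement $\CM^{\infty}_{\BX}\sim\invlim_{n}\CM^{n}_{\BX}$ is then deduced from this, not used as input. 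As written, your argument is circular: the adic-space structure asserted in step two is only justified by the perfectoidness you establish at the very end. A related slip: $\tilde{\BX}^{\rm ad}_{\eta}$ over $\Spa(W(k)[1/p],W(k))$ is \emph{pre}perfectoid, not perfectoid, since the base field is discretely valued; so ``the target is perfectoid'' should throughout read ``becomes perfectoid after base change to a perfectoid field,'' which is also the form in which one argues that the Zariski closed and open conditions cutting out the image inherit the property.
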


As we said at the beginning, $H$ is a fixed $p$-divisible group over $k$ of dimension $1$ and height $h$.

\begin{cons}
\label{ConsLambdaAdGenFibH}
Applying homomorphism (\ref{AltUnivCov}) to the universal alternating morphism $\lambda_{H}:H^{\times r}\to \ep^{r}H$ given by Theorem \ref{MainThmHed} (5), we obtain an alternating  morphism
\[\lambda_{H,\eta}^{\rm ad}:(\tilde{H}_{\eta}^{\rm{ad}})^{\times \mathit{r}}\to \big(\widetilde{\ep^{r}H}\big)_{\eta}^{\rm ad}\]
Using Notations \ref{NotationLambdaMap}, we then obtain a morphism
\[\Lambda_{r}\lambda_{H,\eta}^{\rm ad}:\big(\tilde{H}_{\eta}^{\rm{ad}}\big)^{\times h}\to\Big(\big(\widetilde{\ep^{r}H}\big)_{\eta}^{\rm{ad}}\Big)^{\times \binom{h}{r}}\]
If $s_{1},\dots,s_{h}$ are sections of $\tilde{H}_{\eta}^{\ad}(R,R^{+})$ and $\vec{s}:=(s_{1},\dots,s_{h})$, we write $\Lambda_{r}\lambda_{H,\eta}^{\rm ad}(\vec{s})=(\ep^{r}_{\mathbf{c}}\vec{s})_{\mathbf{c}\in\binom{h}{r}}$, where we use the following notation:\\

We let $\binom{h}{r}$ denote the set of subsets of $\{1,\dots,h\}$ of size $r$. If $\mathbf{c}=\{c_{1}<\dots<c_{r}$\} (with $1\leq c_{i}\leq h$) is an element of $\binom{h}{r}$, then $\ep^{r}_{\mathbf{c}}\vec{s}$ is the section $\lambda_{H,\eta}^{\ad}(s_{c_{1}},\dots,s_{c_{r}})\in (\widetilde{\ep^{r}H})^{\ad}_{\eta}(R,R^{+})$.\xqed{\blacktriangledown}
\end{cons}

\begin{rem}
Let $(R,R^{+})$ be a perfectoid affinoid $\big(W(k)[1/p],W(k)\big)$-algebra. Using identification (\ref{AdGenFibGlobSec}) and Theorem \ref{MainThmHed} (8), homomorphism 
\[\mathscr{L}^{r}_{H,(R,R^{+})}:\ep^{r}_{\BQ_{p}}\big(\tilde{H}_{\eta}^{\rm ad}(R,R^{+})\big)\to \widetilde{\ep^{r}H}_{\eta}^{\rm ad}(R,R^{+})\]is identified with the homomorphism (\ref{MapFrob=p}):
\[\ep^{r}\Big(\big(\BD(H)\otimes_{W(k)}B^{+}_{\rm cris}(R^{+}/p)\big)^{F=p}\Big)\to \Big(\ep^{r}\big(\BD(H)\otimes_{W(k)}B^{+}_{\rm cris}(R^{+}/p)\big)\Big)^{F=p}\]
and we have a commutative diagram
\begin{align}
\label{RemCommDiagUnivCovHDieudMod}
\xymatrix{\big(\tilde{H}_{\eta}^{\rm{ad}}(R,R^{+})\big)^{\times h}\ar[r]^{\Lambda_{r}\lambda_{H,\eta}^{\rm ad}(R,R^{+})}\ar[d]_{\cong}&\Big(\big(\widetilde{\ep^{r}H}\big)_{\eta}^{\rm{ad}}(R,R^{+})\Big)^{\times \binom{h}{r}}\ar[d]^{\cong}\\
\Big(\big(\BD(H)\otimes_{W(k)}B^{+}_{\rm cris}(R^{+}/p)\big)^{F=p}\Big)^{\times h}\,\,\ar[r]_{\Lambda_{r}\lambda_{\BD(H)}\qquad}^{(\ref{MapExtPowerFrob=p})\qquad}&\Big( \Big(\ep^{r}\big(\BD(H)\otimes_{W(k)}B^{+}_{\rm cris}(R^{+}/p)\big)\Big)^{F=p}\Big)^{\times \binom{h}{r}}}
\end{align}
\xqed{\lozenge}
\end{rem}

\begin{lem}
\label{LemQlogDiagComm}
The following diagram is commutative

\[\xymatrix{\big(\tilde{H}_{\eta}^{\rm ad}(R,R^{+})\big)^{\times h}\ar[d]_{\rm qlog^{\times h}}\ar[rr]^{\Lambda_{r}\lambda_{H,\eta}^{\rm ad}\quad}&&\Big(\big(\widetilde{\ep^{r}H}\big)_{\eta}^{\rm{ad}}(R,R^{+})\Big)^{\times \binom{h}{r}}\ar[d]^{\rm qlog^{\times \binom{h}{r}}}\\\big(\BD(H)\otimes_{W(k)}R\big)^{\times h}\ar[rr]_{\Lambda_{r}\lambda_{\BD(H)}\quad}&&\big(\ep^{r}\BD(H)\otimes_{W(k)}R\big)^{\times \binom{h}{r}}}\]where $\lambda_{\BD(H)}:\BD(H)^{\times r}\to \BD(H)$ is the universal alternating morphism $(x_{1},\dots,x_{r})\mapsto x_{1}\wedge\dots\wedge x_{r}$.
\end{lem}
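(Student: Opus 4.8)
The plan is to peel the big square into two commuting pieces and thereby reduce everything to a single routine fact: that Fontaine's map $\Theta$ commutes with the formation of wedge products, which is just the naturality of the universal alternating morphism under a ring homomorphism. Throughout write $A:=B^{+}_{\rm cris}(R^{+}/p)$, so that $\Theta\colon A\to R$ is a ring homomorphism over $W(k)$ and $\BD(H)$ is a finite free $W(k)$-module.

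First I would replace the top horizontal arrow by its Dieudonné-module avatar. By Remark \ref{RemCommDiagUnivCovHDieudMod}, under the canonical isomorphisms $\tilde H_{\eta}^{\rm ad}(R,R^{+})\cong\big(\BD(H)\otimes_{W(k)}A\big)^{F=p}$ and, using Theorem \ref{MainThmHed}(8) (i.e. $\BD(\ep^{r}H)\cong\ep^{r}_{W(k)}\BD(H)$), $(\widetilde{\ep^{r}H})_{\eta}^{\rm ad}(R,R^{+})\cong\big(\ep^{r}(\BD(H)\otimes_{W(k)}A)\big)^{F=p}$, the morphism $\Lambda_{r}\lambda_{H,\eta}^{\rm ad}$ is identified with the wedge morphism $\Lambda_{r}\lambda_{\BD(H)}$ restricted to the $(F=p)$-parts. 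Next I would rewrite the two vertical quasi-logarithms via Lemma \ref{LemCommDiagQLogTheta}: applied to $H$ it factors $\qlog$ as the isomorphism above followed by the restriction of $\Id\otimes\Theta\colon\BD(H)\otimes_{W(k)}A\to\BD(H)\otimes_{W(k)}R$, and applied to $\ep^{r}H$ (again using $\BD(\ep^{r}H)\cong\ep^{r}_{W(k)}\BD(H)$) it realizes the right-hand $\qlog$ as the restriction of $\Id_{\ep^{r}\BD(H)}\otimes\Theta$. After these substitutions the lemma is reduced to the commutativity of
\[
\xymatrix{
\big((\BD(H)\otimes_{W(k)}A)^{F=p}\big)^{\times h}\ar[r]^{\Lambda_{r}\lambda_{\BD(H)}\quad}\ar[d]_{\Theta^{\times h}} & \big((\ep^{r}(\BD(H)\otimes_{W(k)}A))^{F=p}\big)^{\times\binom{h}{r}}\ar[d]^{\Theta^{\times\binom{h}{r}}}\\
\big(\BD(H)\otimes_{W(k)}R\big)^{\times h}\ar[r]_{\Lambda_{r}\lambda_{\BD(H)}\quad} & \big(\ep^{r}_{W(k)}\BD(H)\otimes_{W(k)}R\big)^{\times\binom{h}{r}}
}
\]
where the inclusions of the $(F=p)$-parts are suppressed.

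I would then check this last square componentwise. Fixing $\mathbf c=\{c_{1}<\dots<c_{r}\}\in\binom{h}{r}$ and elements $x_{1},\dots,x_{h}\in(\BD(H)\otimes_{W(k)}A)^{F=p}$, the assertion on the $\mathbf c$-component is exactly
\[
(\Id\otimes\Theta)\big(x_{c_{1}}\wedge\dots\wedge x_{c_{r}}\big)=(\Id\otimes\Theta)(x_{c_{1}})\wedge\dots\wedge(\Id\otimes\Theta)(x_{c_{r}}),
\]
which is the functoriality of the universal alternating morphism under the $W(k)$-algebra homomorphism $\Theta\colon A\to R$, expressed through the base-change isomorphism $\ep^{r}_{A}(\BD(H)\otimes_{W(k)}A)\otimes_{A,\Theta}R\cong\ep^{r}_{R}(\BD(H)\otimes_{W(k)}R)\cong\ep^{r}_{W(k)}\BD(H)\otimes_{W(k)}R$. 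This is precisely the naturality of $\ep^{r}$ under base change, and it holds on the nose because $\BD(H)\otimes_{W(k)}A\otimes_{A,\Theta}R=\BD(H)\otimes_{W(k)}R$.

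The single point that needs genuine care — and which I regard as the main obstacle — is verifying that the right-hand vertical map is the \emph{same} in the two descriptions produced above: the $\Theta$ coming from Lemma \ref{LemCommDiagQLogTheta} applied to $\ep^{r}H$, namely $\Id_{\BD(\ep^{r}H)}\otimes\Theta$, must coincide with the map $\Id_{\ep^{r}\BD(H)}\otimes\Theta$ appearing in the naturality square. This comes down to checking that the isomorphism $\BD(\ep^{r}H)\cong\ep^{r}_{W(k)}\BD(H)$ of Theorem \ref{MainThmHed}(8) is compatible with base change along $W(k)\to A\xrightarrow{\Theta}R$ and with the Frobenius structures, so that the $(F=p)$-restrictions and the identification of the target isocrystals agree. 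Once this compatibility is recorded — it follows from the functoriality of $\Theta$ and of the Dieudonné-module construction — the original square is assembled from the two commuting squares above and the proof is complete.
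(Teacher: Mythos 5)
Your proof is correct and takes essentially the same route as the paper's: identify the top arrow with $\Lambda_{r}\lambda_{\BD(H)}$ on $(F=p)$-parts via diagram (\ref{RemCommDiagUnivCovHDieudMod}), rewrite both quasi-logarithms as $\Theta$ via Lemma \ref{LemCommDiagQLogTheta}, and conclude from the naturality of the universal alternating morphism under the base change $\Theta\colon B^{+}_{\rm cris}(R^{+}/p)\to R$. The only difference is that you spell out the compatibility of the identification $\BD(\ep^{r}H)\cong\ep^{r}_{W(k)}\BD(H)$ with the two descriptions of the right-hand vertical map, a point the paper subsumes under ``the construction of the horizontal morphisms.''
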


\begin{proof}
Using commutative diagram (\ref{RemCommDiagUnivCovHDieudMod}) and Lemma \ref{LemCommDiagQLogTheta}, it is enough to show that the following diagram is commutative:
\[\xymatrix{\big(\BD(H)\otimes_{W(k)}B^{+}_{\rm cris}(R^{+}/p)\big)^{\times h}\,\,\ar[r]_{\Lambda_{r}\lambda_{\BD(H)}\qquad}\ar[d]_{\Theta}&\Big(\ep^{r}\big(\BD(H)\otimes_{W(k)}B^{+}_{\rm cris}(R^{+}/p)\big)\Big)^{\times \binom{h}{r}}\ar[d]^{\Theta}\\\big(\BD(H)\otimes_{W(k)}R\big)^{\times h}\,\,\ar[r]_{\Lambda_{r}\lambda_{\BD(H)}\qquad}&\Big(\ep^{r}\big(\BD(H)\otimes_{W(k)}R\big)\Big)^{\times \binom{h}{r}}}\]
The commutativity of this diagram follows from the construction of the horizontal morphisms and the following observation: if $\theta:A\to B$ is a ring homomorphism and $M$ is a free $A$-module of rank $h$, then the following diagram is commutative
\[\xymatrix{M^{\times h}\ar[r]^{\Lambda_{r}\quad}\ar[d]&(\ep_{A}^{r}M)^{\times \binom{h}{r}}\ar[d]\\
(M\otimes_{A}B)^{\times h}\ar[r]_{\Lambda_{r}\quad}&\big(\ep_{B}^{r}(M\otimes_{A}B)\big)^{\times \binom{h}{r}}}\]
where as usual, we are using Notations \ref{NotationLambdaMap} for the horizontal maps (applied to the universal alternating morphisms $(x_{1},\dots,x_{r})\mapsto x_{1}\wedge\dots\wedge x_{r}$).
\end{proof}

\begin{thm}
\label{ThmLambdaRZ}
Let $k$ be a perfect field $k$ of characteristic $p$ and $H$ a $p$-divisible group over $k$ of dimension $1$. Taking exterior powers induces a morphism of adic spaces over $\Spa\big(W(k)[1/p],W(k)\big)$

\[\Lambda^{r}:\CM_{H}^{\infty}\to \CM_{\ep^{r}H}^{\infty}\]
\end{thm}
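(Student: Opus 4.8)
The plan is to construct $\Lambda^{r}$ on functors of points and then invoke representability. Recall (\cite{MR3272049}, Theorem 6.3.4.) that both $\CM_{H}^{\infty}$ and $\CM_{\ep^{r}H}^{\infty}$ are representable by adic spaces over $\Spa\big(W(k)[1/p],W(k)\big)$, so by the Yoneda lemma it suffices to produce a morphism of the functors they represent, natural in the complete affinoid $\big(W(k)[1/p],W(k)\big)$-algebra $(R,R^{+})$. A point of $\CM_{H}^{\infty}(R,R^{+})$ is a triple $(G,\rho,\alpha)$ with $(G,\rho)\in\CM_{H,\eta}^{\ad}(R,R^{+})$ and $\alpha:\BZ_{p}^{h}\to T_p(G)_{\eta}^{\ad}(R,R^{+})$ a level structure; I will send it to the triple $(\ep^{r}G,\ep^{r}\rho,\Lambda^{r}\alpha)\in\CM_{\ep^{r}H}^{\infty}(R,R^{+})$.

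First I treat the deformation datum. The pair $(G,\rho)$ provides a $p$-divisible group $G$ of dimension $1$ and height $h$ over an open and bounded subring $S_{0}\subset R^{+}$, together with a quasi-isogeny $\rho:H\times_{k}S_{0}/p\to G\times_{S_{0}}S_{0}/p$. Since $G$ lies in $\CB\CT_{h,\leq1}(S_{0})$, Theorem \ref{MainThmHed} produces $\ep^{r}G\in\CB\CT_{\binom{h}{r}}(S_{0})$. As $\ep^{r}$ is a morphism of stacks it commutes with base change and extends to the isogeny category, so $\ep^{r}\rho:\ep^{r}H\times_{k}S_{0}/p\to\ep^{r}G\times_{S_{0}}S_{0}/p$ is again a quasi-isogeny; passing to the adic generic fiber yields $(\ep^{r}G,\ep^{r}\rho)\in\CM_{\ep^{r}H,\eta}^{\ad}(R,R^{+})$.

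Next I treat the level structure. Using the homomorphism $\mathscr{T}^{r}_{G,(R,R^{+})}$ of (\ref{ExtPowTateDrinLev}) from Proposition \ref{PropExtPowUnivDrinLev} together with the canonical identification $\ep^{r}_{\BZ_{p}}\BZ_{p}^{h}\cong\BZ_{p}^{\binom{h}{r}}$, I set
\[\Lambda^{r}\alpha:=\mathscr{T}^{r}_{G,(R,R^{+})}\circ\ep^{r}_{\BZ_{p}}(\alpha):\BZ_{p}^{\binom{h}{r}}\longrightarrow T_p(\ep^{r}G)_{\eta}^{\ad}(R,R^{+}).\]
It remains to verify the level-structure condition, namely that for every point $x=\Spa(K,K^{+})$ of $\Spa(R,R^{+})$ the induced map $(\Lambda^{r}\alpha)(x):\BZ_{p}^{\binom{h}{r}}\to T_p(\ep^{r}G)_{\eta}^{\ad}(K,K^{+})$ is an isomorphism. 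By the naturality of $\mathscr{T}^{r}$ in $(R,R^{+})$ asserted in Proposition \ref{PropExtPowUnivDrinLev}, this map equals $\mathscr{T}^{r}_{G,(K,K^{+})}\circ\ep^{r}_{\BZ_{p}}\big(\alpha(x)\big)$. Since $\alpha$ is a level structure, $\alpha(x)$ is an isomorphism, hence so is $\ep^{r}_{\BZ_{p}}\big(\alpha(x)\big)$; and $\mathscr{T}^{r}_{G,(K,K^{+})}$ is an isomorphism by Remark \ref{RemTateModIsoField}. Therefore $(\Lambda^{r}\alpha)(x)$ is an isomorphism, as required. This is the \emph{crux} of the argument: the rest is formal, whereas the non-degeneracy of the induced level structure rests precisely on $\mathscr{T}^{r}$ being an isomorphism over affinoid fields.

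Finally, naturality in $(R,R^{+})$ of the assignment $(G,\rho,\alpha)\mapsto(\ep^{r}G,\ep^{r}\rho,\Lambda^{r}\alpha)$ follows from the functoriality of $\ep^{r}$ and its compatibility with base change, together with the canonicity of the homomorphisms $\mathscr{T}^{r}_{G,(R,R^{+})}$ in Proposition \ref{PropExtPowUnivDrinLev}. This produces a natural transformation of functors and hence, by Yoneda, the desired morphism $\Lambda^{r}:\CM_{H}^{\infty}\to\CM_{\ep^{r}H}^{\infty}$ of adic spaces over $\Spa\big(W(k)[1/p],W(k)\big)$. The main point requiring care is that applying $\ep^{r}$ is compatible with the limit--colimit formalism defining $\CM_{\eta}^{\ad}$ and $T_p(\,\cdot\,)_{\eta}^{\ad}$; this compatibility is exactly what the preceding results (in particular Proposition \ref{PropAdGenFibPresMult} and Proposition \ref{PropExtPowUnivDrinLev}) have been arranged to guarantee, so at this stage the verification is routine.
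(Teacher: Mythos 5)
Your proposal is correct and follows essentially the same route as the paper's own proof: both send $(G,\rho,\alpha)$ to $(\ep^{r}G,\ep^{r}\rho,\mathscr{T}^{r}_{G,(R,R^{+})}\circ\ep^{r}\alpha)$, using Theorem \ref{MainThmHed} for the existence and base-change/functoriality of $\ep^{r}G$ and $\ep^{r}\rho$, and Remark \ref{RemTateModIsoField} to check the level-structure condition pointwise over $\Spa(K,K^{+})$. The only cosmetic difference is that you make the Yoneda/representability step explicit, which the paper leaves implicit.
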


\begin{proof}
Let $(R,R^{+})$ be a complete affinoid $\big(W(k)[1/p],W(k)\big)$-algebra and $(G,\rho,\alpha)$ an element of $\CM_{H}^{\infty}(R,R^{+})$. Since $G$ is a deformation of $H$, it also has dimension at most $1$ and so, by Theorem \ref{MainThmHed}, $\ep^{r}G$ exists. Set $\Lambda^{r}(G,\rho,\alpha):=(\ep^{r}G,\ep^{r}\rho,\ep^{r}\alpha)$, where \[\ep^{r}\rho:\ep^{r}(H)\times_{k}R/p\cong\ep^{r}(H\times_{k}R/p)\to \ep^{r}(G\times_{R}R/p)\cong\ep^{r}(G)\times_{R}R/p \] is the exterior power of the quasi-isogeny $\rho:H\times_{k}R/p\to G\times_{R}R/p$, which exists thanks to the functoriality of exterior powers, and their base change property. The level structure $\ep^{r}\alpha$ is also given by Theorem \ref{MainThmHed} (7) or as the composition (see (\ref{ExtPowTateDrinLev}))
\[\ep^{r}(\BZ_{p}^{h})\arrover{\ep^{r}\alpha}\ep^{r}\big(T_{p}({G})_{\eta}^{\rm ad}(R,R^{+})\big)\arrover{\mathscr{T}^{r}_{G,(R,R^{+})}} T_{p}({\ep^{r}G})_{\eta}^{\rm ad}(R,R^{+})\]
Over a point $x=\Spa(K,K^{+})\in \Spa(R,R^{+})$  this composition is an isomorphism, since $\alpha$ and $\mathscr{T}^{r}_{G,(K,K^{+})}$ are both isomorphisms (cf. Remark {RemTateModIsoField}).\\
These constructions are functorial and therefore, we obtain the desired morphism 
\[\Lambda^{r}:\CM_{H}^{\infty}\to \CM_{\ep^{r}H}^{\infty}\]
\end{proof}


\begin{notation}
Let $k$ be a perfect field of characteristic $p$ and $G$ a $p$-divisible group over $k$ of height $h$ and dimension $d$. We denote by $\mathscr{F}\ell_{G}$ the flag variety parametrizing $d$-dimensional quotients of the $h$-dimensional $W(k)[1/p]$-vector space $\BD(G)[1/p]$. We consider $\mathscr{F}\ell_{G}$ as an adic space over $\Spa\big(W(k)[1/p],W(k)\big)$.\xqed{\star}
\end{notation}

\begin{cons}
\label{ConsMapFlagVar}
As before, assume that the dimension of $H$ is $1$, so that $\ep^{r}H$ exists. We want to construct a morphism 
\[\FL_{r}:\mathscr{F}\ell_{H}\to\mathscr{F}\ell_{\ep^{r}H}\]
Let $(R,R^{+})$ be a complete affinoid $\big(W(k)[1/p],W(k)\big)$-algebra and $\xi\in\mathscr{F}\ell_{H}(R,R^{+})$ represent the following short exact sequence of $R$-modules \[(\xi)\qquad 0\to K\to \BD(H)\otimes_{W(k)}R\to W\to 0\] where $W$ is a finitely generated projective $R$-module of rank $1$. Since $W$ is projective, $(\xi)$ splits:\[\BD(H)\otimes R\cong K\oplus W\] and since it has rank $1$, we have
\[\ep^{r}\BD(H)\otimes R\cong \ep^{r}K\oplus \ep^{r-1}K\otimes W\]
Therefore, we have the following short exact sequence:
\[0\to\ep^{r}K\to\ep^{r}\BD(H)\otimes_{W(k)}R\to\ep^{r-1}K\otimes_{R}W\to 0\]We define $\FL_{r}(\xi)$ to be this short exact sequence.
\xqed{\blacktriangledown}
\end{cons}

\section{Main Theorem}

In this section, we fix an algebraically closed field $k$ of characteristic $p$ and a $p$-divisible group $H$ over $k$ of height $h$ and dimension $1$.\\

\begin{prop}[\cite{MR3272049}, Lemma 6.3.6]
\label{ThmSchWeinPAdicHdgRZ}
Let $G$ be a $p$-divisible group over $k$ of dimension $d$ and height $h$. The Rapoport-Zink tower $\CM_{G}^{\infty}$ canonically represents the functor on complete affinoid $\big(W(k)[1/p],W(k)\big)$-algebras, sending $(R,R^{+})$ to the set of $h$-tuples
\[(s_{1},\dots,s_{h})\in \big(\tilde{G}^{\rm ad}_{\eta}(R,R^{+})\big)^{\times h}\]satisfying the following conditions:
\begin{enumerate}[(i)]
\item The matrix $\big(\qlog(s_{1}),\dots,\qlog(s_{h})\big)\in \big(\BD(G)\otimes_{W(k)}R\big)^{\times h}\cong\BM_{h}(R)$ is of rank $h-d$. Let $\BD(G)\otimes R\onto W$ be the induced finitely generated projective quotient of rank $d$ (see Lemma \ref{LemRankMatQuot}).
\item
For all geometric points $x=\Spa(\mathbf{C},\CO_{\mathbf{C}})\to \Spa(R,R^{+})$, the sequence
\[0\to\BQ_{p}^{h}\arrover{(s_{1},\dots,s_{h})} \tilde{G}^{\ad}_{\eta}(\mathbf{C},\CO_{\mathbf{C}})\arrover{\qlog} {W}\otimes_{R}\mathbf{C}\to0\]is exact.
\end{enumerate}
Moreover, forgetting conditions (i) and (ii) gives a locally closed embedding $\CM_{G}^{\infty}\subset \big(\tilde{G}^{\ad}_{\eta}\big)^{\times h}$.
\end{prop}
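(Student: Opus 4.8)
The plan is to produce an explicit natural transformation $\CM_{G}^{\infty}\to\big(\tilde{G}^{\rm ad}_{\eta}\big)^{\times h}$, identify its image with the locus cut out by (i) and (ii), and verify that it is a locally closed embedding. On $(R,R^{+})$-points I would proceed as follows: given a triple $(G',\rho,\alpha)$, the quasi-isogeny $\rho$ induces an isomorphism of universal covers $\tilde\rho\colon\tilde{G}\arrover{\cong}\widetilde{G'}$, since the universal cover depends only on the isogeny class. Composing $\alpha\colon\BZ_{p}^{h}\to T_{p}(G')^{\rm ad}_{\eta}$ with the canonical embedding $T_{p}(G')\into\widetilde{G'}$ and with $\tilde\rho^{-1}$ gives a map $\BZ_{p}^{h}\to\tilde{G}^{\rm ad}_{\eta}$, and evaluating on the standard $\BZ_{p}$-basis yields the tuple $(s_{1},\dots,s_{h})$. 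This assignment is manifestly functorial in $(R,R^{+})$.

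Next I would check that the image lands in the locus (i)+(ii). By Lemma \ref{LemCommDiagQLogTheta}, $\qlog(s_{i})$ is the image under Fontaine's map $\Theta$ of the element of $\big(\BD(G)\otimes_{W(k)}B^{+}_{\rm cris}(R^{+}/p)\big)^{F=p}$ corresponding to $s_{i}$, so it lies in $\BD(G)\otimes_{W(k)}R$. Grothendieck--Messing theory identifies the deformation $(G',\rho)$ with a Hodge filtration on $\BD(G)\otimes_{W(k)}R$, that is, a rank-$d$ quotient $\BD(G)\otimes_{W(k)}R\onto W\cong\Lie(G')$, and the quasi-logarithm is set up so that the cokernel of the matrix $\big(\qlog(s_{1}),\dots,\qlog(s_{h})\big)$ is exactly $W$; hence by Lemma \ref{LemRankMatQuot} the matrix has rank $h-d$, establishing (i). For (ii), over a geometric point the defining isomorphism $\alpha\colon\BZ_{p}^{h}\arrover{\cong}T_{p}(G')$ identifies $\BQ_{p}^{h}$ with $T_{p}(G')[1/p]$, which is precisely the kernel of $\qlog$ on $\tilde{G}^{\rm ad}_{\eta}(\mathbf{C},\CO_{\mathbf{C}})$ (the logarithm sequence of a $p$-divisible group over $\CO_{\mathbf{C}}$); since the image of $\qlog$ has quotient $W\otimes_{R}\mathbf{C}$, the sequence in (ii) is exact.

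The heart of the argument is to invert this assignment on the locus (i)+(ii). Starting from a tuple $(s_{1},\dots,s_{h})$ satisfying (i), Lemma \ref{LemRankMatQuot} furnishes the rank-$d$ projective quotient $W$ of $\BD(G)\otimes_{W(k)}R$, which I would read as a Hodge filtration and feed into Grothendieck--Messing to obtain a deformation $(G',\rho)$ with $\Lie(G')\cong W$. The sections then assemble into $\alpha\colon\BZ_{p}^{h}\to T_{p}(G')^{\rm ad}_{\eta}$, and condition (ii) guarantees that $\alpha$ is an isomorphism on every geometric fibre, hence a genuine infinite-level structure. That the two constructions are mutually inverse, and in particular that the tuple reconstructs $G'$ uniquely, is where one must invoke the Scholze--Weinstein classification of $p$-divisible groups over $\CO_{\mathbf{C}}$ as modifications of vector bundles on the Fargues--Fontaine curve: this is exactly what pins the quasi-isogeny class together with the filtration down to an honest group on geometric points.

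I expect this last reconstruction to be the main obstacle, because Grothendieck--Messing applies directly only over (nilpotent) PD-thickenings, so passing from the purely linear-algebraic datum $(s_{1},\dots,s_{h})$ back to a $p$-divisible group in a family over a general affinoid $(R,R^{+})$ requires gluing the geometric-point description through the full Scholze--Weinstein machinery rather than a formal deformation-theoretic argument. To conclude, I would observe that condition (i) is locally closed, being cut out by the vanishing of the size-$(h-d+1)$ determinantal ideal together with the unit-generation of the size-$(h-d)$ one (Definition \ref{DefRankMatrix}), applied to the universal matrix of quasi-logarithms on $\big(\tilde{G}^{\rm ad}_{\eta}\big)^{\times h}$, while condition (ii) refines the rank locus to an open sublocus, isomorphy on geometric fibres being an open condition. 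Combined with the injectivity established above, this yields the asserted locally closed embedding $\CM_{G}^{\infty}\subset\big(\tilde{G}^{\rm ad}_{\eta}\big)^{\times h}$.
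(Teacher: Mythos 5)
Your first paragraph coincides with the only part of the argument that the paper actually writes out. The paper does not reprove this proposition: it is quoted from \cite{MR3272049} (their Lemma 6.3.6), and the paper's proof consists of that citation plus a recollection of the forward construction, namely that a triple $(\Gamma,\rho,\alpha)$ yields $h$ sections via the identification $\tilde{G}_{R_{0}}\cong\tilde{\Gamma}$ coming from invariance of universal covers under quasi-isogeny, followed by
\[\BZ_{p}^{h}\arrover{\alpha}T_{p}(\Gamma)^{\ad}_{\eta}(R,R^{+})\into\tilde{\Gamma}^{\ad}_{\eta}(R,R^{+})\cong\tilde{G}^{\ad}_{\eta}(R,R^{+}).\]
On that part you and the paper agree exactly, and your decision to push the inverse construction (from the linear-algebraic datum back to a family of $p$-divisible groups) onto the Scholze--Weinstein classification over $\CO_{\mathbf{C}}$ is structurally the same move the paper makes by citing them wholesale; your diagnosis that a naive Grothendieck--Messing argument fails over a general affinoid, since that theory lives on nilpotent PD-thickenings, is also correct.

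Two of the steps you present as routine are, however, genuine content, and would be gaps if your text were read as a self-contained proof. First, in the forward direction you assert that ``the quasi-logarithm is set up so that the cokernel of the matrix $\big(\qlog(s_{1}),\dots,\qlog(s_{h})\big)$ is exactly $W$.'' What is formal is only that each $\qlog(s_{i})$ lies in the Hodge filtration $\ker\big(\BD(G)\otimes_{W(k)}R\to\Lie(\Gamma)\otimes R\big)$, hence that the rank is \emph{at most} $h-d$; that the $\qlog(s_{i})$ generate this kernel as a direct summand (rank exactly $h-d$) is a Hodge--Tate-type theorem about the image of the Tate module under the period map, which Scholze--Weinstein prove and which cannot be obtained by unwinding definitions. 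Second, your local-closedness argument claims that, given (i), condition (ii) is open because ``isomorphy on geometric fibres is an open condition.'' Condition (ii) is not a fibrewise isomorphism of vector bundles of equal rank; it is exactness of a three-term complex of infinite-dimensional $\BQ_{p}$-vector spaces, equivalently the statement that $\CO_{X}^{h}\to\CE_{G}$ is injective with cokernel $i_{\infty*}(W\otimes_{R}\mathbf{C})$ on the curve attached to each geometric point. Since the curve itself varies with the point, openness of this locus over $\Spa(R,R^{+})$ is exactly one of the delicate points handled by the machinery of \cite{MR3272049}, not an instance of a generic open-condition principle. Neither issue puts you in conflict with the paper --- the paper proves none of this and simply cites the result --- but both steps would need real arguments to stand on their own.
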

\begin{proof}
We are not going to repeat the proof here, and refer to \cite{MR3272049} for details. We are only going to recall how we obtain such an $h$-tuple from a point on the Rapoprt-Zink tower. Let $(R,R^{+})$ be a complete affinoid $\big(W(k)[1/p],W(k)\big)$-algebra and $(\Gamma,\rho,\alpha)\in\CM^{\infty}_{G}(R,R^{+})$, where $(\Gamma,\rho)$ is defined over some open and bounded subring $R_{0}\subset R^{+}$. The quasi-isogeny $\rho$ provides an identification $\tilde{G}_{R_{0}}\cong \tilde{\Gamma}$, and therefore, we have a morphism
\[\BZ_{p}^{h}\arrover{\alpha}T_{p}(\Gamma)^{\ad}_{\eta}(R,R^{+})\into\tilde{\Gamma}^{\ad}_{\eta}(R,R^{+})\cong\tilde{G}^{\ad}_{\eta}(R,R^{+})\]This morphism provides us with $h$ sections of $\tilde{G}^{\ad}_{\eta}(R,R^{+})$ that satisfy conditions (i) and (ii) above. The rank-$d$ quotient thus obtained (condition (i)) is canonically isomorphic to $\Lie(G)\otimes R$.
\end{proof}

\begin{dfn}
Let us denote by $\leftidx{_{1}}\CM_{G}^{\infty}$ the subsheaf of $\big(\tilde{G}^{\ad}_{\eta}\big)^{\times h}$, whose sections satisfy (only) condition (i) of the above proposition. So, we have inclusions of functors
\[\CM^{\infty}_{G}\subset\leftidx{_{1}}\CM_{G}^{\infty}\subset\big(\tilde{G}^{\ad}_{\eta}\big)^{\times h}\]\xqed{\blacktriangle}
\end{dfn}

\begin{lem}
\label{LemM_1GoesToItself}
The composition $$\leftidx{_{1}}\CM_{H}^{\infty}\into\big(\tilde{H}_{\eta}^{\rm{ad}}\big)^{\times h}\arrover{\Lambda_{r}\lambda_{H,\eta}^{\rm ad}}\Big(\big(\widetilde{\ep^{r}H}\big)_{\eta}^{\rm{ad}}\Big)^{\times \binom{h}{r}}$$ factors through the inclusion 
\[\leftidx{_{1}}\CM_{\ep^{r}H}^{\infty}\into \Big(\big(\widetilde{\ep^{r}H}\big)_{\eta}^{\rm{ad}}\Big)^{\times \binom{h}{r}}\] Furthermore, the resulting square
\[\xymatrix{\leftidx{_{1}}\CM_{H}^{\infty}\ar[rr]\ar[d]&&\leftidx{_{1}}\CM_{\ep^{r}H}^{\infty}\ar[d]\\
\big(\tilde{H}_{\eta}^{\rm{ad}}\big)^{\times h}\ar[rr]_{\Lambda_{r}\lambda_{H,\eta}^{\rm ad}}&&\Big(\big(\widetilde{\ep^{r}H}\big)_{\eta}^{\rm{ad}}\Big)^{\times \binom{h}{r}}}\]
is Cartesian.
\end{lem}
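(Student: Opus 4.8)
The plan is to reduce both assertions to the matrix-rank criterion of Lemma \ref{RankGivenByExtPow}, using the quasi-logarithm diagram of Lemma \ref{LemQlogDiagComm} as the bridge between the universal-cover picture and the Dieudonn\'e-module picture. Fix a complete affinoid $\big(W(k)[1/p],W(k)\big)$-algebra $(R,R^{+})$ and a section $\vec{s}=(s_{1},\dots,s_{h})\in\big(\tilde{H}_{\eta}^{\rm ad}\big)^{\times h}(R,R^{+})$. Writing $A:=\big(\qlog(s_{1}),\dots,\qlog(s_{h})\big)\in\BM_{h}(R)$ for its quasi-logarithm matrix (using $\BD(H)\otimes_{W(k)}R\cong R^{h}$), membership of $\vec{s}$ in $\leftidx{_{1}}\CM_{H}^{\infty}$ is, by condition (i) and since $\dim H=1$, precisely the requirement $\rank(A)=h-1$.

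First I would identify the quasi-logarithm matrix attached to the image $\Lambda_{r}\lambda_{H,\eta}^{\rm ad}(\vec{s})=(\ep^{r}_{\mathbf{c}}\vec{s})_{\mathbf{c}}$. Using the identification $\BD(\ep^{r}H)\cong\ep^{r}_{W(k)}\BD(H)$ of Theorem \ref{MainThmHed}~(8), the relevant matrix lies in $\BM_{\binom{h}{r}}(R)$, its columns being the coordinate vectors of the $\qlog(\ep^{r}_{\mathbf{c}}\vec{s})$ in the standard wedge basis $\{e_{I}\}$. By Lemma \ref{LemQlogDiagComm} one has $\qlog(\ep^{r}_{\mathbf{c}}\vec{s})=\qlog(s_{c_{1}})\wedge\dots\wedge\qlog(s_{c_{r}})$ for $\mathbf{c}=\{c_{1}<\dots<c_{r}\}$, and the coefficient of $e_{I}$ in this wedge is exactly the $(I,\mathbf{c})$-minor of $A$. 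Hence this matrix is $\ep^{r}A$ in the sense of Notation \ref{NotExtPowMat}. Since $\ep^{r}H$ has height $\binom{h}{r}$ and, by Theorem \ref{MainThmHed}~(3), dimension $\binom{h-1}{r-1}$, condition (i) for $\ep^{r}H$ demands $\rank(\ep^{r}A)=\binom{h}{r}-\binom{h-1}{r-1}=\binom{h-1}{r}$, the last equality being Pascal's rule.

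With this dictionary, both statements follow from Lemma \ref{RankGivenByExtPow} applied with $n=h$ and $d=r$ (legitimate as $r<h$). The forward implication gives the factorization: if $\rank(A)=h-1$ then $\rank(\ep^{r}A)=\binom{h-1}{r}$, so $\Lambda_{r}\lambda_{H,\eta}^{\rm ad}(\vec{s})$ satisfies condition (i) for $\ep^{r}H$ and thus lands in $\leftidx{_{1}}\CM_{\ep^{r}H}^{\infty}$; as this is functorial in $(R,R^{+})$, it produces the top horizontal arrow and the commutativity of the square. For the Cartesian property, since $\leftidx{_{1}}\CM_{\ep^{r}H}^{\infty}$ is a subsheaf of $\big((\widetilde{\ep^{r}H})_{\eta}^{\rm ad}\big)^{\times\binom{h}{r}}$, the fiber product has $(R,R^{+})$-sections exactly the $\vec{s}$ with $\Lambda_{r}\lambda_{H,\eta}^{\rm ad}(\vec{s})\in\leftidx{_{1}}\CM_{\ep^{r}H}^{\infty}(R,R^{+})$, i.e. with $\rank(\ep^{r}A)=\binom{h-1}{r}$. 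The converse implication of Lemma \ref{RankGivenByExtPow} then forces $\rank(A)=h-1$, so $\vec{s}\in\leftidx{_{1}}\CM_{H}^{\infty}(R,R^{+})$; the reverse containment is the forward implication already used. Thus the fiber product and $\leftidx{_{1}}\CM_{H}^{\infty}$ agree as subsheaves, level by level, and the square is Cartesian.

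The genuinely substantive input is Lemma \ref{RankGivenByExtPow}, which is already available; the remaining work, and where I expect to need the most care, is verifying that Lemma \ref{LemQlogDiagComm} really does identify the image matrix with $\ep^{r}A$ — matching the coordinate description of wedge products with the minors of $A$ — and checking that the rank bookkeeping via Theorem \ref{MainThmHed}~(3) and Pascal's rule comes out correctly. A secondary point to keep honest is that the Cartesian property may be tested on sections over each $(R,R^{+})$ precisely because all the objects in play are subsheaves of the ambient products of universal covers.
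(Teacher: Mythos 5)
Your proposal is correct and follows essentially the same route as the paper's proof: reduce condition (i) to the matrix-rank statement via the quasi-logarithm, use Lemma \ref{LemQlogDiagComm} to identify the image matrix with $\ep^{r}Q_{\vec{s}}$ in the sense of Notation \ref{NotExtPowMat}, and then invoke the equivalence of Lemma \ref{RankGivenByExtPow} to get both the factorization and the Cartesian property at once. The only differences are expository: you spell out the wedge-basis/minor dictionary, the dimension count $\binom{h-1}{r-1}$ from Theorem \ref{MainThmHed}~(3), and the hypothesis $r<h$ needed for Lemma \ref{RankGivenByExtPow}, all of which the paper leaves implicit.
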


\begin{proof}
Let $(R,R^{+})$ be an affinoid $\big(W(k)[1/p],W(k)\big)$-algebra. Let $s_{1},\dots,s_{h}$ be sections of $\tilde{H}_{\eta}^{\ad}(R,R^{+})$ and set $\vec{s}:=(s_{1},\dots,s_{h})$. Let us denote by $Q_{\vec{s}}$ the matrix $\big(\qlog(s_{1}),\dots,\qlog(s_{h})\big)\in\big(\BD(H)\otimes_{W(k)}R\big)^{\times h}\cong\BM_{h}(R)$. Similarly, let us denote by $Q_{\Lambda\vec{s}}\in\BM_{\binom{h}{r}}(R)$ the matrix obtained, by applying $\qlog$, from $\Lambda_{r}(\vec{s})=(\ep^{r}_{\mathbf{c}}\vec{s})_{\mathbf{c}\in\binom{h}{r}}$  (see Construction \ref{ConsLambdaAdGenFibH} for notations), i.e., $Q_{\Lambda\vec{s}}=\big(\qlog(\ep^{r}_{\mathbf{c}}\vec{s})\big)$. By Lemma \ref{LemQlogDiagComm}, we have $Q_{\Lambda\vec{s}}=\ep^{r}Q_{\vec{s}}$ (we use Notations \ref{NotExtPowMat}). It follows from Lemma \ref{RankGivenByExtPow} that $Q_{\vec{s}}$ has rank $h-1$ if and only if $Q_{\Lambda\vec{s}}$ has rank $\binom{h-1}{r}=\binom{h}{r}-\binom{h-1}{r-1}$. This achieves the proof.
\end{proof}

The following lemma describes the morphism $\leftidx{_{1}}\CM_{H}^{\infty}\to \leftidx{_{1}}\CM_{\ep^{r}H}^{\infty}$ explicitly:

\begin{lem}
\label{LemExplExtShortExSeq}
Let $(R,R^{+})$ be an affinoid $\big(W(k)[1/p],W(k)\big)$-algebra and let $\vec{s}:=(s_{1},\dots,s_{h})$ be an element of $\leftidx{_{1}}\CM_{H}^{\infty}(R,R^{+})$ defining the short exact sequence:
\begin{align}
\label{EqShExSeqDieuMod}
\qquad0\to K\to \BD(H)\otimes_{W(k)}R\to W\to 0
\end{align} in other words, $W$ is the rank-$1$ finitely generated projective module, given as the quotient of $\BD(H)\otimes R$ by the submodule $K$, generated by $\qlog(s_{1}),\dots,\qlog(s_{h})$. The short exact sequence defined by $\Lambda_{r}(\vec{s})$ is 
\[0\to \ep^{r}K\to \ep^{r}\BD(H)\otimes R\to \ep^{r-1}K\otimes W\to 0\]
\end{lem}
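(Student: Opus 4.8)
The plan is to match the short exact sequence attached to $\Lambda_{r}(\vec{s})$ through the moduli description with the cokernel presentation of the matrix $\ep^{r}Q_{\vec{s}}$, and then to read off both its submodule and its quotient from Lemma \ref{LemExSeqExtPowMod}. First I would recall, from Lemma \ref{LemM_1GoesToItself}, that $\Lambda_{r}(\vec{s})$ lies in $\leftidx{_{1}}\CM_{\ep^{r}H}^{\infty}(R,R^{+})$, so that by condition (i) of Proposition \ref{ThmSchWeinPAdicHdgRZ} (together with Lemma \ref{LemRankMatQuot}) its associated short exact sequence is the cokernel sequence of the matrix $Q_{\Lambda\vec{s}}=\big(\qlog(\ep^{r}_{\mathbf{c}}\vec{s})\big)_{\mathbf{c}\in\binom{h}{r}}$, whose columns are the $\qlog$-images of the sections $\ep^{r}_{\mathbf{c}}\vec{s}$. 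By Lemma \ref{LemQlogDiagComm} one has $Q_{\Lambda\vec{s}}=\ep^{r}Q_{\vec{s}}$, and using Theorem \ref{MainThmHed}(8) together with the compatibility of exterior powers with the base change $W(k)\to R$, I would identify $\BD(\ep^{r}H)\otimes_{W(k)}R\cong\ep^{r}_{R}\big(\BD(H)\otimes_{W(k)}R\big)$; under this identification the column $\qlog(\ep^{r}_{\mathbf{c}}\vec{s})$ becomes $\qlog(s_{c_{1}})\wedge\dots\wedge\qlog(s_{c_{r}})$.

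Next I would apply Lemma \ref{LemExSeqExtPowMod} to the presentation $R^{h}\xrightarrow{Q_{\vec{s}}}\BD(H)\otimes_{W(k)}R\to W\to 0$. Here $K=\image Q_{\vec{s}}$ is the kernel of the quotient onto $W$, and since $W$ is projective the sequence (\ref{EqShExSeqDieuMod}) splits; thus $K$ is a direct summand of a free module, hence finitely generated projective of rank $h-1$, and as $r<h$ the hypotheses of Lemma \ref{LemExSeqExtPowMod} hold with $d=r$. The lemma then produces the exact sequence
\[\ep^{r}(R^{h})\xrightarrow{\ep^{r}Q_{\vec{s}}}\ep^{r}\big(\BD(H)\otimes_{W(k)}R\big)\to\ep^{r-1}K\otimes_{R}W\to 0,\]
which identifies the quotient appearing in the sequence of $\Lambda_{r}(\vec{s})$ with $\ep^{r-1}K\otimes_{R}W$.

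It then remains to identify the submodule, namely $\image(\ep^{r}Q_{\vec{s}})$, with $\ep^{r}K$. I would reuse the argument internal to Lemma \ref{LemExSeqExtPowMod}: since $Q_{\vec{s}}$ factors as the surjection $R^{h}\twoheadrightarrow K$ followed by the inclusion $K\hookrightarrow\BD(H)\otimes R$, the map $\ep^{r}Q_{\vec{s}}$ factors through $\ep^{r}K$ with $\ep^{r}(R^{h})\to\ep^{r}K$ surjective; hence its image equals the image of the canonical inclusion $\ep^{r}K\hookrightarrow\ep^{r}\big(\BD(H)\otimes R\big)$, which by the splitting $\ep^{r}\big(\BD(H)\otimes R\big)\cong\ep^{r}K\oplus\ep^{r-1}K\otimes W$ is precisely the kernel of the surjection onto $\ep^{r-1}K\otimes W$. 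Assembling the two identifications yields the asserted sequence
\[0\to\ep^{r}K\to\ep^{r}\BD(H)\otimes R\to\ep^{r-1}K\otimes W\to 0.\]

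I expect the only genuinely delicate point to be the bookkeeping of the first paragraph: correctly translating the intrinsic sequence defined by $\Lambda_{r}(\vec{s})$ via the moduli interpretation into the explicit cokernel presentation of $\ep^{r}Q_{\vec{s}}$, and tracking the identification $\BD(\ep^{r}H)\cong\ep^{r}\BD(H)$ compatibly with base change to $R$. Once those matchings are in place, both the submodule and the quotient drop out directly from Lemmas \ref{LemQlogDiagComm} and \ref{LemExSeqExtPowMod}, with no further computation required.
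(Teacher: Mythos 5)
Your proof is correct and follows essentially the same route as the paper's: the paper's (very terse) proof likewise combines Lemma \ref{LemQlogDiagComm} (giving $Q_{\Lambda\vec{s}}=\ep^{r}Q_{\vec{s}}$) with the splitting of (\ref{EqShExSeqDieuMod}) as in Construction \ref{ConsMapFlagVar}, which is exactly the content of your appeal to Lemma \ref{LemExSeqExtPowMod} and the factorization of $\ep^{r}Q_{\vec{s}}$ through $\ep^{r}K$. You have merely written out in full the bookkeeping (moduli description of the sequence, the identification $\BD(\ep^{r}H)\otimes R\cong\ep^{r}(\BD(H)\otimes R)$, and the image/cokernel identifications) that the paper leaves implicit.
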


\begin{proof}
This follows from Lemma \ref{LemQlogDiagComm} and the fact that sequence (\ref{EqShExSeqDieuMod}) splits (see also Construction \ref{ConsMapFlagVar}).
\end{proof}

\begin{prop}
\label{PropFirstCartDiag}
The following diagram is Cartesian:
\begin{align}
\label{CartDiagM-1M}
\xymatrix{\CM_{H}^{\infty}\ar[d]\ar[rr]^{\Lambda^{r}\quad}&&\CM_{\ep^{r}H}^{\infty}\ar[d]\\
\leftidx{_{1}}\CM_{H}^{\infty}\ar[rr]_{\Lambda_{r}}&&\leftidx{_{1}}\CM_{\ep^{r}H}^{\infty}}
\end{align}
\end{prop}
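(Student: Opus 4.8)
The plan is to deduce the Cartesian property of the square from the already-established Cartesian square of the $\leftidx{_{1}}$-versions (Lemma \ref{LemM_1GoesToItself}), by showing that the two vertical inclusions, which are cut out by condition (ii) of Proposition \ref{ThmSchWeinPAdicHdgRZ}, match up along $\Lambda_{r}$. Concretely, set $P:=\leftidx{_{1}}\CM_{H}^{\infty}\times_{\leftidx{_{1}}\CM_{\ep^{r}H}^{\infty}}\CM_{\ep^{r}H}^{\infty}$. The commutativity of the outer square (Theorem \ref{ThmLambdaRZ}) gives a monomorphism $\CM_{H}^{\infty}\to P$ of subfunctors of $(\tilde{H}_{\eta}^{\rm ad})^{\times h}$, and since $\CM_{\ep^{r}H}^{\infty}\hookrightarrow\leftidx{_{1}}\CM_{\ep^{r}H}^{\infty}$ is a monomorphism one unwinds $P(R,R^{+})=\{\vec{s}\in\leftidx{_{1}}\CM_{H}^{\infty}(R,R^{+}):\Lambda_{r}\vec{s}\ \text{satisfies (ii)}\}$, whereas $\CM_{H}^{\infty}(R,R^{+})=\{\vec{s}:\vec{s}\ \text{satisfies (ii)}\}$. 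As condition (ii) is tested at geometric points $x=\Spa(\mathbf{C},\CO_{\mathbf{C}})$, the whole statement reduces to the pointwise biconditional: for $\vec{s}\in\leftidx{_{1}}\CM_{H}^{\infty}(\mathbf{C},\CO_{\mathbf{C}})$, $\vec{s}$ satisfies (ii) if and only if $\Lambda_{r}\vec{s}$ does. Note that condition (i) for both $H$ and $\ep^{r}H$ is already available, the latter by Lemma \ref{LemM_1GoesToItself}.

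Next I would translate (ii) into the language of vector bundles on $X$. By Lemma \ref{LemGlobSecVectUnivCov} the tuple $\vec{s}$ is the same datum as a morphism $f:\CO_{X}^{\oplus h}\to\CE_{H}$, and by Lemma \ref{LemCommDiagQLogTheta} the map $\qlog$ is the evaluation $\Theta=i_{\infty}^{*}$ at $\infty$. I claim that, granting (i), condition (ii) holds exactly when $f$ is injective as a map of $\CO_{X}$-modules. Indeed $\det\CE_{H}$ is a line bundle of degree $\dim H=1$, hence $\cong\CO_{X}(1)$; if $f$ is injective then $\det f$ is a nonzero section of $\CO_{X}(1)$, which condition (i) forces to vanish at $\infty$, hence only there and to order $1$ for degree reasons, so $\cokernel f\cong i_{\infty,*}W$ is a length-$1$ skyscraper. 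Applying $\Gamma(X,-)$ and using $H^{1}(X,\CO_{X})=0$ then recovers precisely the exact sequence of (ii), while a non-injective $f$ has a positive-rank cokernel and cannot fit into such a sequence. This is the Scholze--Weinstein modification description, and I would invoke it in the same form already used for Proposition \ref{ThmSchWeinPAdicHdgRZ}.

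The heart of the argument is a determinantal comparison between $f$ and its $r$-th exterior power. Under the identification $\CE_{\ep^{r}H}\cong\CE_{\ep^{r}(\BD(H)\otimes B^{+}_{\rm cris})}$ furnished by Theorem \ref{MainThmHed}(8) together with base change of exterior powers, the construction of $\Lambda_{r}\lambda_{H,\eta}^{\rm ad}$ and the definition of $\mathscr{L}$ (\ref{MapExtVecBun}) show that $\Lambda_{r}\vec{s}$ corresponds to $g=\mathscr{L}\circ\ep^{r}f$, where $\ep^{r}f:\CO_{X}^{\binom{h}{r}}\to\ep^{r}\CE_{H}$. At the generic point $\eta_{X}$ of $X$ these are matrices over the function field with $\det(\ep^{r}f)=(\det f)^{\binom{h-1}{r-1}}$ up to a unit, and $\mathscr{L}_{\eta_{X}}$ is an isomorphism; hence $g$ is injective $\iff\ep^{r}f$ is injective $\iff f$ is injective. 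Applying the equivalence of the previous paragraph to $\ep^{r}H$ (of dimension $\binom{h-1}{r-1}$, with condition (i) granted), condition (ii) for $\Lambda_{r}\vec{s}$ holds iff $g$ is injective, and combining the two equivalences yields the desired biconditional. Here the corank of $g$ at $\infty$ equals the order of vanishing of $\det g$ there, so all elementary divisors are of order one and $\cokernel g$ is a genuine length-$\binom{h-1}{r-1}$ skyscraper; Lemma \ref{LemExplExtShortExSeq} and Lemma \ref{LemQlogDiagComm} identify it with $i_{\infty,*}(\ep^{r-1}K\otimes W)$, matching the quotient $W'$ attached to $\Lambda_{r}\vec{s}$ by condition (i).

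I expect the main obstacle to be the comparison morphism $\mathscr{L}$: its isomorphy on all of $X$ is only obtained later as a byproduct (and in fact rests on this very theorem), so the argument must be arranged to use only its \emph{generic} isomorphy, which is all the determinantal criterion requires. This I would establish by hand from the explicit description of $\CE_{N}$ over the affine chart $D_{+}(t)\subset X$, checking that away from $\infty$ the graded pieces $(\ep^{r}M)_{\rm gr}$ and $\ep^{r}(M_{\rm gr})$ give the same localization. The remaining care is purely bookkeeping at $\infty$ — ensuring the cokernel skyscraper is the prescribed $\ep^{r-1}K\otimes W$ rather than a fatter torsion sheaf — which, as noted, is forced by the coincidence of corank and determinantal order and re-uses the matching of the rank conditions already carried out via Lemma \ref{RankGivenByExtPow} in Lemma \ref{LemM_1GoesToItself}.
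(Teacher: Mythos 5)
Your proposal is essentially correct on the Cartesian half of the statement, and its skeleton matches the paper's: reduce to geometric points $\Spa(\mathbf{C},\CO_{\mathbf{C}})$, translate condition (ii) into the Scholze--Weinstein modification description on the Fargues--Fontaine curve, and transfer injectivity from $g=\mathscr{L}\circ\ep^{r}f$ back to $f$. Where you genuinely diverge is the mechanism for recovering (ii) for $\vec{s}$ from injectivity of $f$: you note that $\det f$ is a nonzero section of the degree-one line bundle $\det\CE_{H}$, that condition (i) forces its single simple zero to sit at $\infty$, and hence that $\cokernel f$ is a length-one skyscraper there; the identification of this skyscraper with $i_{\infty*}(W\otimes\mathbf{C})$ and the passage to global sections (with $H^{1}(X,\CO_{X})=0$) then agree with the paper's ending. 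The paper never looks at $\det f$: it uses exactness of the sequence for $\Lambda_{r}(\vec{s})$ (\cite[Theorem 6.2.1]{MR3272049}) to see that $\Lambda_{r}(\vec{s})$ is an isomorphism away from $\infty$, deduces by a rank count that $\mathscr{L}$ is too, hence that $\ep^{r}\vec{s}$ is, and then descends to $\vec{s}$ via \cite[Corollary 2.3]{H6} before counting degrees. Your route is a real simplification: it needs neither \cite[Corollary 2.3]{H6} nor any property of $\mathscr{L}$ beyond its existence, because the only implication required, namely $g$ injective $\Rightarrow\ep^{r}f$ injective $\Rightarrow f$ injective, is formal (the first since $g$ factors through $\ep^{r}f$, the second by Sylvester--Franke). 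Consequently the ``main obstacle'' you single out --- generic isomorphy of $\mathscr{L}$ --- is self-imposed: it enters only in the direction of your biconditional that is not needed for Cartesianness (that direction already follows from commutativity of the square), so you should delete the dependency rather than attempt the by-hand verification over $D_{+}(t)$, which you leave unproven.

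The one genuine gap is commutativity of the square, which is half of the paper's proof and which you dispatch by citing Theorem \ref{ThmLambdaRZ}. That theorem only constructs the top arrow $\Lambda^{r}$; it does not say that $\Lambda^{r}$, defined moduli-theoretically through Tate modules and the maps $\mathscr{T}^{r}$ of Theorem \ref{MainThmHed}(7), agrees after the vertical inclusions with the bottom arrow $\Lambda_{r}$, defined through universal covers and $\mathscr{L}^{r}$. Without that agreement your comparison map $\CM_{H}^{\infty}\to P$ into the fiber product is not even defined. This is precisely what diagram (\ref{CommDiagTateUnivCov}) of Proposition \ref{PropExtPowUnivDrinLev} supplies, and the paper's proof consists of chasing a triple $(\Gamma,\rho,\alpha)$ both ways around the square and matching the two composites using that diagram; you should cite and perform that step rather than attribute it to Theorem \ref{ThmLambdaRZ}.
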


\begin{proof}
Let $(R,R^{+})$ be an affinoid $\big(W(k)[1/p],W(k)\big)$-algebra. Let us first show that the diagram is commutative. Let $\vec{s}:=(s_{1},\dots,s_{h})$ be an $h$-tuple of sections of $\tilde{H}_{\eta}^{\ad}(R,R^{+})$, belonging to $\CM_{H}^{\infty}(R,R^{+})$.	By Proposition \ref{ThmSchWeinPAdicHdgRZ}, there is a triple $(\Gamma, \rho,\alpha)\in \CM_{H}^{\infty}(R,R^{+})$, such that $s_{i}$ are given by the morphism 
\[\BZ_{p}^{h}\arrover{\alpha}T_{p}(\Gamma)^{\ad}_{\eta}(R,R^{+})\into\tilde{\Gamma}^{\ad}_{\eta}(R,R^{+})\cong\tilde{H}^{\ad}_{\eta}(R,R^{+})\]Let us chase the element $\vec{s}$ in this diagram. Under $\Lambda^{r}$ (the top horizontal morphism) it goes to the sections representing $(\ep^{r}\Gamma,\ep^{r}\rho,\ep^{r}\alpha)$ (see Theorem \ref{ThmLambdaRZ}). More precisely, it goes to the section given by the composition
\begin{align*}
\ep^{r}(\BZ_{p}^{h})&\arrover{\ep^{r}\alpha}\ep^{r}\big(T_{p}({\Gamma})_{\eta}^{\rm ad}(R,R^{+})\big)\arrover{\mathscr{T}^{r}_{\Gamma,(R,R^{+})}}\\
&T_{p}({\ep^{r}\Gamma})_{\eta}^{\rm ad}(R,R^{+})\into(\widetilde{\ep^{r}\Gamma})^{\ad}_{\eta}(R,R^{+})\cong(\widetilde{\ep^{r}H})^{\ad}_{\eta}(R,R^{+})
\end{align*}
Under $\Lambda_{r}$ (the bottom horizontal morphism), $\vec{s}$ goes to the section given by the composition 
\begin{align*}
\ep^{r}(\BZ_{p})&\arrover{\ep^{r}\alpha}\ep^{r}\big(T_{p}({\Gamma})_{\eta}^{\rm ad}(R,R^{+})\big)\to\\
&\ep^{r}\big(\widetilde{\Gamma}^{\ad}_{\eta}(R,R^{+})\big)\cong\ep^{r}\big(\widetilde{H}^{\ad}_{\eta}(R,R^{+})\big)\arrover{\mathscr{L}^{r}_{H,(R,R^{+})}}(\widetilde{\ep^{r}H})^{\ad}_{\eta}(R,R^{+})
\end{align*}
Proposition \ref{PropExtPowUnivDrinLev} (diagram (\ref{CommDiagTateUnivCov})) states that these two compositions are equal. It implies that the diagram is commutative.\\

Let us now show that the diagram is Cartesian. So, pick $\vec{s}=(s_{r},\dots,s_{h})$ in $\leftidx{_{1}}\CM_{H}^{\infty}(R,R^{+})$ such that $\Lambda_{r}(\vec{s})=(\ep^{r}_{\mathbf{c}}\vec{s})_{\mathbf{c}\in\binom{h}{r}}$ is in $\CM_{\ep^{r}H}^{\infty}(R,R^{+})$. Let 
\[0\to K\to \BD(H)\otimes_{W(k)}R\to W\to 0\] be the short exact sequence defined by $\vec{s}$. In order to show that $\vec{s}$ belongs to $\CM_{H}^{\infty}$, we have to show that for all geometric generic points $x=\Spa(\mathbf{C},\CO_{\mathbf{C}})\to\Spa(R,R^{+})$, the sequence 
\[0\to\BQ_{p}^{h}\arrover{\vec{s}} \tilde{H}^{\ad}_{\eta}(\mathbf{C},\CO_{\mathbf{C}})\arrover{\qlog} {W}\otimes_{R}\mathbf{C}\to0\]is exact.\\

By Lemma \ref{LemExplExtShortExSeq} the short exact sequence defined by $\Lambda_{r}(\vec{s})$ is
\[0\to \ep^{r}K\to \ep^{r}\BD(H)\otimes R\to \ep^{r-1}K\otimes W\to 0\]\\


Set $\CF:=\CO_{X}^{h}$. By Lemma \ref{LemGlobSecVectUnivCov}, sections $s_{1},\dots,s_{h}$ define a morphism $\vec{s}:\CF\to \CE_{H}$ of vector bundles on the Fargues-Fontaine curve $X$ (here $\CE_{H}$ is the vector bundle associated with $H$, i.e., $\CE_{\BD(H)\otimes_{W(k)}B^{+}_{\rm cris}(\CO_{\mathbf{C}}/p)}$). Similarly, sections $\ep^{r}_{\mathbf{c}}\vec{s}$ define a morphism $\Lambda_{r}\vec{s}:\ep^{r}\CF\to\CE_{\ep^{r}H}$. Note that we have a commutative diagram
\begin{align}
\label{TriangleFFCurve}
\xymatrix{\ep^{r}\CF\ar[r]^{\ep^{r}\vec{s}}\ar[dr]_{\Lambda_{r}(\vec{s})}&\ep^{r}\CE_{H}\ar[d]^{\mathscr{L}}\\&\CE_{\ep^{r}H}}
\end{align}
By \cite[Theorem 6.2.1]{MR3272049}, the sequence 
\[0\to\ep^{r}\CF\to \CE_{\ep^{r}H}\to i_{\infty*}(\ep^{r-1}K\otimes W\otimes\mathbf{C})\to0\] is exact. In particular, the oblique morphism in diagram (\ref{TriangleFFCurve}) is a monomorphism, which implies that $\ep^{r}\vec{s}:\ep^{r}\CF\to\ep^{r}\CE_{H}$ is a monomorphism as well. It follows that $\CF\arrover{\vec{s}} \CE_{H}$ is a monomorphism. Let $\CV$ be the cokernel of $\CF\arrover{\vec{s}} \CE_{H}$.\\

Since $\ep^{r}\CF\arrover{\Lambda_{r}(\vec{s})}\CE_{\ep^{r}H}$ is an isomorphism away from $\infty\in X$, and both $\ep^{r}\CE_{H}$ and $\CE_{\ep^{r}H}$ have rank $\binom{h}{r}$,  $\mathscr{L}:\ep^{r}\CE_{H}\to\CE_{\ep^{r}H}$ is an isomorphism away from $\infty$. It follows that $\ep^{r}\CF\arrover{\ep^{r}\vec{s}}\ep^{r}\CE_{H}$ is an isomorphism away from $\infty$ as well. By \cite[Corollary 2.3]{H6}, $\CF\arrover{\vec{s}}\CE_{H}$ has the same property and we have a modification of vector bundles \[0\to\CF\arrover{\vec{s}}\CE_{H}\to i_{\infty*}U\to0\]  where $U$ is a $\mathbf{C}$-vector space. Counting degrees of the members of this sequence, we have $\dim U=1$.
Note that by construction, the composition $\CF\arrover{\vec{s}}\CE_{H}\to i_{\infty *}(W\otimes \mathbf{C})$ is zero and $\CE_{H}\to i_{\infty*}(W\otimes\mathbf{C})$ is an epimorphism. Therefore, we have a commutative diagram
\begin{align*}
\xymatrix{0\ar[r]&\CF\ar[r]^{\vec{s}}&\CE_{H}\ar[r]\ar@{->>}[d]&i_{\infty *}U\ar[r]\ar@{->>}[dl]&0\\
&&i_{\infty *}(W\otimes \mathbf{C})&&}
\end{align*}
As $U$ and $W\otimes\mathbf{C}$ have both dimension $1$, $i_{\infty *}U\to i_{\infty *}(W\otimes \mathbf{C})$ is in fact an isomorphism and so, the following sequence is exact:
\[0\to\CF\arrover{\vec{s}}\CE_{H}\to i_{\infty *}(W\otimes \mathbf{C})\to0\]
Taking global sections, we obtain the short exact sequence:
\[0\to\BQ_{p}^{h}\arrover{\vec{s}} \tilde{H}^{\ad}_{\eta}(\mathbf{C},\CO_{\mathbf{C}})\arrover{\qlog} {W}\otimes_{R}\mathbf{C}\to0\]as desired.
\end{proof}

\begin{prop}
The morphism $\mathscr{L}:\ep^{r}\CE_{H}\to\CE_{\ep^{r}H}$ is an isomorphism.
\end{prop}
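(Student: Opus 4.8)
The plan is to combine the fact, already extracted in the proof of Proposition \ref{PropFirstCartDiag}, that $\mathscr{L}$ is an isomorphism away from the closed point $\infty\in X$, with a comparison of degrees on the Fargues--Fontaine curve, which is a complete, regular, connected (hence integral) curve on which the degree is additive in short exact sequences of coherent sheaves. The principle I want to exploit is that a morphism of vector bundles on $X$ which is an isomorphism away from one point and relates two bundles of the same rank and the same degree must be a global isomorphism.

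First I would recall why $\mathscr{L}$ is an isomorphism away from $\infty$: choosing any geometric point of $\CM_{H}^{\infty}$ over $(\mathbf{C},\CO_{\mathbf{C}})$ produces, as in the proof of Proposition \ref{PropFirstCartDiag}, sections $\vec{s}$ for which $\CF=\CO_{X}^{h}\arrover{\vec{s}}\CE_{H}$ and $\ep^{r}\CF\arrover{\Lambda_{r}(\vec{s})}\CE_{\ep^{r}H}$ are both modifications supported at $\infty$, together with the factorization $\Lambda_{r}(\vec{s})=\mathscr{L}\circ\ep^{r}\vec{s}$ of diagram (\ref{TriangleFFCurve}); since $\vec{s}$, hence $\ep^{r}\vec{s}$, and $\Lambda_{r}(\vec{s})$ are all isomorphisms away from $\infty$, so is $\mathscr{L}$. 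Because $X$ is integral and $\ep^{r}\CE_{H}$ is a vector bundle, any morphism that is an isomorphism on the dense open $X\setminus\{\infty\}$ is automatically injective: its kernel is a torsion-free subsheaf vanishing on a dense open, hence zero. I would then let $\CQ$ denote the cokernel of $\mathscr{L}$, a coherent torsion sheaf supported at $\infty$.

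Next I would compute the two degrees. In the modification $0\to\CO_{X}^{h}\to\CE_{H}\to i_{\infty *}(W\otimes\mathbf{C})\to0$ the module $W$ has dimension $\dim H=1$, so $\deg\CE_{H}=1$; applying the determinant formula $\det(\ep^{r}E)\cong(\det E)^{\otimes\binom{\rank E-1}{r-1}}$ with $\rank\CE_{H}=h$ gives $\deg(\ep^{r}\CE_{H})=\binom{h-1}{r-1}$. On the other hand, the analogous modification for the $p$-divisible group $\ep^{r}H$ (which exists as a $p$-divisible group by Theorem \ref{MainThmHed}) gives $\deg\CE_{\ep^{r}H}=\dim(\ep^{r}H)$, and by Theorem \ref{MainThmHed} (3) this equals $\binom{h-1}{r-1}\cdot\dim H=\binom{h-1}{r-1}$. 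Thus $\ep^{r}\CE_{H}$ and $\CE_{\ep^{r}H}$ have equal rank $\binom{h}{r}$ and equal degree $\binom{h-1}{r-1}$.

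Finally, additivity of degree in the short exact sequence $0\to\ep^{r}\CE_{H}\arrover{\mathscr{L}}\CE_{\ep^{r}H}\to\CQ\to0$ forces the length of the torsion sheaf $\CQ$ to be $\deg\CE_{\ep^{r}H}-\deg(\ep^{r}\CE_{H})=0$, whence $\CQ=0$ and $\mathscr{L}$ is an isomorphism. I expect the only delicate point to be the bookkeeping at $\infty$, namely confirming that the cokernel is a finite-length sheaf whose degree equals its length, so that degree zero forces vanishing; this rests on $X$ being regular of dimension one with $\infty$ a degree-one point, so that the local ring at $\infty$ is a discrete valuation ring. The degree computations themselves are routine once one invokes the modification descriptions of $\CE_{H}$ and $\CE_{\ep^{r}H}$ and the dimension formula of Theorem \ref{MainThmHed} (3).
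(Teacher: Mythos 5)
Your proposal is correct, but it handles the crucial step --- what happens at $\infty$ --- by a genuinely different argument than the paper. Both proofs begin identically, recalling from the proof of Proposition \ref{PropFirstCartDiag} (via the auxiliary bundle $\CF$ and diagram (\ref{TriangleFFCurve})) that $\mathscr{L}$ is an isomorphism away from $\infty$. From there the paper argues pointwise: since the two bundles have the same rank $\binom{h}{r}$, it suffices to show $\mathscr{L}$ is surjective at $\infty$; by Nakayama this reduces to surjectivity of the fiber map $i_{\infty}^{*}\mathscr{L}$, which the paper identifies with the isomorphism $\ep^{r}\big(\BD(H)\otimes_{W(k)}\mathbf{C}\big)\cong\BD(\ep^{r}H)\otimes_{W(k)}\mathbf{C}$ coming from Theorem \ref{MainThmHed} (8), i.e. from the Dieudonn\'e-theoretic comparison (\ref{IsoExtPowDieudMod}). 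You instead argue numerically: injectivity is free because a subsheaf of a vector bundle on the integral curve $X$ vanishing on a dense open is zero, and then equality of ranks and degrees kills the torsion cokernel supported at $\infty$. Here $\deg(\ep^{r}\CE_{H})=\binom{h-1}{r-1}\deg\CE_{H}=\binom{h-1}{r-1}$ by the determinant formula $\det(\ep^{r}E)\cong(\det E)^{\otimes\binom{n-1}{r-1}}$, while $\deg\CE_{\ep^{r}H}=\dim\ep^{r}H=\binom{h-1}{r-1}\dim H$ by the modification of \cite[Theorem 6.2.1]{MR3272049} applied to any $(\mathbf{C},\CO_{\mathbf{C}})$-point of $\CM^{\infty}_{\ep^{r}H}$ together with the dimension formula of Theorem \ref{MainThmHed} (3); your identification $\deg\CE_{H}=\dim H=1$ is justified the same way. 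What your route buys is independence from Theorem \ref{MainThmHed} (8) --- a nontrivial input --- replacing it by purely numerical invariants; the cost is importing the degree formalism of the Fargues-Fontaine curve from \cite{courbe} (completeness, additivity of degree in short exact sequences, and positivity of the degree of a nonzero torsion sheaf, which holds since $\CO_{X,\infty}$ is a discrete valuation ring and $\infty$ has degree one). What the paper's route buys is brevity, given that Theorem \ref{MainThmHed} (8) is already available, and the explicit identification of the fiber $i_{\infty}^{*}\mathscr{L}$ with the Dieudonn\'e comparison map, which is of some independent interest.
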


\begin{proof}
We saw in the proof that away from $\infty$ this morphism is an isomorphism. Note that we proved this via the ``auxiliary'' vector bundle $\CF$ (diagram (\ref{TriangleFFCurve})) , which can be constructed by taking any $\Spa(\mathbf{C},\CO_{\mathbf{C}})$-point of $\CM_{H}^{\infty}$. So, we only need to show that it is as isomorphism at $\infty$ as well. Since $\ep^{r}\CE_{H}$ and $\CE_{\ep^{r}H}$ are vectors bundles of the same rank, it is enough to show that $\mathscr{L}_{\infty}$ is an epimorphism. By Nakayama's lemma, it is then enough to show that $$i_{\infty}^{*}\mathscr{L}:i_{\infty}^{*}\ep^{r}\CE_{H}\to i_{\infty}^{*}\CE_{\ep^{r}H}$$ in an epimorphism. We have
\[i_{\infty}^{*}\ep^{r}\CE_{H}\cong\ep^{r}(i_{\infty}^{*}\CE_{H})\cong \ep^{r}(\BD(H)\otimes_{W(k)}\mathbf{C})\] and 
\[i_{\infty}^{*}\CE_{\ep^{r}H}\cong\BD(\ep^{r}H)\otimes_{W(k)}\mathbf{C}\] and the morphism $i_{\infty}^{*}\mathscr{L}$ is nothing but the isomorphism (\ref{IsoExtPowDieudMod}) of Theorem \ref{MainThmHed} tensored with $\mathbf{C}$.
\end{proof}

\begin{thm}
The following diagram is Cartesian
\[\xymatrix{\CM_{H}^{\infty}\ar[d]\ar[rr]^{\Lambda^{r}\quad}&&\CM_{\ep^{r}H}^{\infty}\ar[d]\\\big(\tilde{H}_{\eta}^{\rm{ad}}\big)^{\times h}\ar[rr]_{\Lambda_{r}\lambda_{H,\eta}^{\rm ad}\quad}&&\Big(\big(\widetilde{\ep^{r}H}\big)_{\eta}^{\rm{ad}}\Big)^{\times \binom{h}{r}}}\]
\end{thm}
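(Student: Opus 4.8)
The plan is to recognize the diagram in the statement as the outer rectangle obtained by vertically stacking the two Cartesian squares already at our disposal, and then to invoke the pasting lemma for pullback squares.

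First I would record that, by the very definition of $\leftidx{_{1}}\CM_{G}^{\infty}$, the canonical embedding factors as $\CM_{G}^{\infty}\subset\leftidx{_{1}}\CM_{G}^{\infty}\subset\big(\tilde{G}_{\eta}^{\rm ad}\big)^{\times h}$, applied both with $G=H$ and with $G=\ep^{r}H$. Hence the two vertical arrows of the diagram in the theorem factor through $\leftidx{_{1}}\CM_{H}^{\infty}$ and $\leftidx{_{1}}\CM_{\ep^{r}H}^{\infty}$, and the whole diagram decomposes as the upper square of Proposition \ref{PropFirstCartDiag} stacked on top of the lower square of Lemma \ref{LemM_1GoesToItself}. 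The two squares share the middle row $\leftidx{_{1}}\CM_{H}^{\infty}\to\leftidx{_{1}}\CM_{\ep^{r}H}^{\infty}$; here one has to check that the arrow $\Lambda_{r}$ appearing in Proposition \ref{PropFirstCartDiag} coincides with the arrow induced on the $\leftidx{_{1}}\CM$'s by $\Lambda_{r}\lambda_{H,\eta}^{\rm ad}$ in Lemma \ref{LemM_1GoesToItself}, which is immediate from Construction \ref{ConsLambdaAdGenFibH} and the explicit description in Lemma \ref{LemExplExtShortExSeq}. With the middle row identified, the composite of the two left (resp.\ right) vertical edges is the embedding $\CM_{H}^{\infty}\hookrightarrow\big(\tilde{H}_{\eta}^{\rm ad}\big)^{\times h}$ (resp.\ $\CM_{\ep^{r}H}^{\infty}\hookrightarrow\big((\widetilde{\ep^{r}H})_{\eta}^{\rm ad}\big)^{\times\binom{h}{r}}$), while the top and bottom edges are $\Lambda^{r}$ and $\Lambda_{r}\lambda_{H,\eta}^{\rm ad}$ respectively, so that the outer rectangle is exactly the diagram of the theorem.

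Second, I would apply the pasting lemma. All four corners are subsheaves of finite products of universal covers on the category of complete affinoid $\big(W(k)[1/p],W(k)\big)$-algebras, so the relevant fibre products are computed sectionwise; the pasting lemma for pullbacks, valid in any category and in particular sectionwise in $\Ens$, then applies: once the lower square is Cartesian, the outer rectangle is Cartesian if and only if the upper square is Cartesian. Since the lower square is Cartesian by Lemma \ref{LemM_1GoesToItself} and the upper square is Cartesian by Proposition \ref{PropFirstCartDiag}, the outer rectangle is Cartesian, which is the assertion. The substantive work has already been spent upstream—the $p$-adic Hodge-theoretic modification argument on the Fargues-Fontaine curve in Proposition \ref{PropFirstCartDiag} and the rank criterion of Lemma \ref{RankGivenByExtPow} feeding Lemma \ref{LemM_1GoesToItself}—so there is no genuine obstacle at this final stage. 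The one point demanding care is precisely the verification that the shared middle row of the two squares is one and the same morphism, so that the two Cartesian squares may legitimately be pasted.
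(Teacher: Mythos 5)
Your pasting argument is precisely the paper's own proof of this theorem in the case $r<h$: the paper likewise factors the vertical embeddings through $\leftidx{_{1}}\CM_{H}^{\infty}$ and $\leftidx{_{1}}\CM_{\ep^{r}H}^{\infty}$, views the diagram as the vertical composition of the square of Proposition \ref{PropFirstCartDiag} with that of Lemma \ref{LemM_1GoesToItself} (the shared middle row being in both cases the restriction of $\Lambda_{r}\lambda_{H,\eta}^{\rm ad}$ produced by Lemma \ref{LemM_1GoesToItself}), and concludes by pasting the two Cartesian squares.

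There is, however, a genuine gap: you never restrict $r$, and your argument breaks down when $r=h$, a case the statement covers and which the paper splits off at the very start of its proof, settling it instead by citing \cite[Theorem 6.4.1]{MR3272049}. The issue is that Lemma \ref{LemM_1GoesToItself} is only available for $r<h$: its proof rests on the rank criterion of Lemma \ref{RankGivenByExtPow}, whose hypothesis is $d<n$, and the relevant equivalence genuinely fails at $d=n$. Concretely, for $r=h$ the group $\ep^{h}H$ has height $1$ and dimension $1$, so condition (i) defining $\leftidx{_{1}}\CM_{\ep^{h}H}^{\infty}$ reads $\det Q_{\vec{s}}=0$; but vanishing of the determinant does not imply that $Q_{\vec{s}}$ has rank $h-1$ in the sense of Definition \ref{DefRankMatrix}, which also requires the $(h-1)$-minors to generate the unit ideal. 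For instance, for $h\geq 2$ the zero tuple $\vec{s}=(0,\dots,0)$ lies in the fibre product of the lower square but not in $\leftidx{_{1}}\CM_{H}^{\infty}$, so for $r=h$ the lower square is not Cartesian at all, and no pasting argument can yield the theorem in that case (the outer rectangle is nevertheless Cartesian there by Scholze--Weinstein; the Cartesian property of the outer rectangle together with that of the upper square does not descend to the lower square). To repair your proof, carry out the pasting only for $r<h$ and dispose of $r=h$ by the citation above, exactly as the paper does.
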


\begin{proof}
If $r=h$, then this is \cite[Theorem 6.4.1]{MR3272049}. Assume $r<h$. The statement follows immediately from Lemma \ref{LemM_1GoesToItself} and Proposition \ref{PropFirstCartDiag}.
\end{proof}

Let $k$ be a prefect field of characteristic $p$ and $G$ a $p$-divisible group over $k$. This proves the following lemma:

\begin{lem}
\label{LemPerMorFact}
Let $k$ be a prefect field of characteristic $p$ and $G$ a $p$-divisible group over $k$. The peroid morphism $\pi_{G}:\CM_{G}^{\infty}\to\mathscr{F}\ell_{G}$ factors through $\CM_{G}^{\infty}\into\leftidx{_{1}}\CM_{G}^{\infty}$.
\end{lem}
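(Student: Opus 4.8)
The plan is to construct a natural morphism $\leftidx{_{1}}\CM_{G}^{\infty}\to\mathscr{F}\ell_{G}$ directly from condition (i) of Proposition \ref{ThmSchWeinPAdicHdgRZ} and then to check that precomposing it with the inclusion $\CM_{G}^{\infty}\into\leftidx{_{1}}\CM_{G}^{\infty}$ reproduces the period morphism $\pi_{G}$. First I would work on the level of functors of points: let $(R,R^{+})$ be a complete affinoid $(W(k)[1/p],W(k))$-algebra and let $\vec{s}=(s_{1},\dots,s_{h})$ be a section of $\leftidx{_{1}}\CM_{G}^{\infty}(R,R^{+})$. By definition this means only that the matrix $\big(\qlog(s_{1}),\dots,\qlog(s_{h})\big)\in\BM_{h}(R)$ has rank $h-d$, so by Lemma \ref{LemRankMatQuot} its cokernel $W$ is a finitely generated projective $R$-module of rank $d$, fitting into a canonical quotient $\BD(G)\otimes_{W(k)}R\onto W$. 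Since $R$ is a $W(k)[1/p]$-algebra we have $\BD(G)\otimes_{W(k)}R\cong\BD(G)[1/p]\otimes_{W(k)[1/p]}R$, so this quotient is exactly an $(R,R^{+})$-point of the flag variety $\mathscr{F}\ell_{G}$ of rank-$d$ quotients of $\BD(G)[1/p]$. I would let the morphism send $\vec{s}$ to this point.

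Next I would verify the two things this requires. For well-definedness and functoriality, the $\qlog$ matrix and hence its cokernel commute with base change along morphisms of $(R,R^{+})$-algebras, and the rank condition is preserved (Lemma \ref{LemDetIdBaseChange} and Remark \ref{RemRankBaseChange}(ii)); this yields a genuine morphism of adic spaces $\leftidx{_{1}}\CM_{G}^{\infty}\to\mathscr{F}\ell_{G}$. For the factorization itself, I would recall that the Grothendieck-Messing period morphism sends a point $(\Gamma,\rho,\alpha)\in\CM_{G}^{\infty}(R,R^{+})$ to the Hodge filtration quotient $\Lie$ of the deformation $\Gamma$, transported to a quotient of $\BD(G)[1/p]\otimes_{W(k)[1/p]}R$ via the quasi-isogeny $\rho$. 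The key input is the last assertion in the proof of Proposition \ref{ThmSchWeinPAdicHdgRZ}: the rank-$d$ quotient $W$ cut out by condition (i) is \emph{canonically} this Hodge filtration quotient. Since the construction of the previous paragraph produces precisely $W$ and makes no use of condition (ii), it agrees with $\pi_{G}$ on all of $\CM_{G}^{\infty}$, giving the desired factorization $\pi_{G}=\big(\CM_{G}^{\infty}\into\leftidx{_{1}}\CM_{G}^{\infty}\to\mathscr{F}\ell_{G}\big)$.

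The main obstacle is not the formal bookkeeping but the comparison invoked in the second step: one must be sure that the Grothendieck-Messing Hodge filtration really is the cokernel of the quasi-logarithm matrix, compatibly with the identifications coming from $\rho$. This identification is exactly what underlies the Scholze-Weinstein $p$-adic Hodge-theoretic description of $\CM_{G}^{\infty}$ (Proposition \ref{ThmSchWeinPAdicHdgRZ} and \cite[\S6.3]{MR3272049}), where the quasi-logarithm is constructed precisely so as to compute the Hodge filtration on the universal cover. Once this is granted, the whole point of the lemma is simply the observation that the period datum depends only on condition (i), i.e. only on the image of a point in $\leftidx{_{1}}\CM_{G}^{\infty}$, and the remaining verifications are routine.
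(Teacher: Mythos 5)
Your proposal is correct and follows essentially the same route as the paper: define the map on $\leftidx{_{1}}\CM_{G}^{\infty}$ via the rank-$d$ projective cokernel $W$ of the quasi-logarithm matrix (Lemma \ref{LemRankMatQuot}), and then invoke the identification, recorded in the proof of Proposition \ref{ThmSchWeinPAdicHdgRZ}, of this quotient with the Lie algebra of the deformation to see that it agrees with the Grothendieck--Messing period morphism on $\CM_{G}^{\infty}$. The paper states this more tersely, while you spell out functoriality and base-change compatibility, but the key inputs and the structure of the argument coincide.
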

\begin{proof}
Recall that the period morphism is defined, using Grothendieck-Messing deformation theory, by sending a deformation $(G',\rho,\alpha)$ (up to quasi-isogeny) over $R$ to the quotient $$\BD(G)\otimes_{W(k)}R[1/p]\onto\Lie(G')[1/p]$$ It follows from what we said in the proof of Proposition \ref{ThmSchWeinPAdicHdgRZ} (regarding $\CM_{G}^{\infty}$ as a subsheaf of $\big(\tilde{G}_{\eta}^{\rm{ad}}\big)^{\times h}$) that the peroid morphism $\pi_{G}$ factors through $\CM_{G}^{\infty}\into\leftidx{_{1}}\CM_{G}^{\infty}$. Therefore, if $\vec{s}\in \leftidx{_{1}}\CM_{G}^{\infty}(R,R^{+})$ define the short exact sequence \[0\to K\to \BD(G)\otimes_{W(k)}R\to W\to 0\] then the image $\vec{s}$ under $\pi_{G}$ is the quotient $\BD(G)\otimes_{W(k)}R\to W\to 0$. 
\end{proof}

\begin{lem}
\label{LemFirsPerMorDiag}
The following diagram is commutative
\[\xymatrix{\leftidx{_{1}}\CM_{H}^{\infty}\ar[d]_{\pi_{H}}\ar[rr]^{{\Lambda_{r}}}&&\leftidx{_{1}}\CM_{\ep^{r}H}^{\infty}\ar[d]^{\pi_{\ep^{r}H}}\\\mathscr{F}\ell_{H}\ar[rr]_{\FL_{r}}&&\mathscr{F}\ell_{\ep^{r}H}}\]
\end{lem}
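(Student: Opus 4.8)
The plan is to verify commutativity on $(R,R^{+})$-points for an arbitrary complete affinoid $\big(W(k)[1/p],W(k)\big)$-algebra $(R,R^{+})$; since all four corners are sheaves and the two composite morphisms are built functorially, this suffices. So I would fix a section $\vec{s}=(s_{1},\dots,s_{h})\in \leftidx{_{1}}\CM_{H}^{\infty}(R,R^{+})$ and let
\[0\to K\to \BD(H)\otimes_{W(k)}R\to W\to 0\]
be the short exact sequence it defines, where $W$ is finitely generated projective of rank $1$ (this is condition (i), as recorded in Lemma \ref{LemExplExtShortExSeq}).

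First I would chase $\vec{s}$ down and then across. By Lemma \ref{LemPerMorFact}, $\pi_{H}(\vec{s})$ is precisely the rank-$1$ quotient $\BD(H)\otimes_{W(k)}R\onto W$ attached to the sequence above, and applying $\FL_{r}$ to this point produces, by Construction \ref{ConsMapFlagVar}, the sequence
\[0\to \ep^{r}K\to \ep^{r}\BD(H)\otimes_{W(k)}R\to \ep^{r-1}K\otimes_{R}W\to 0,\]
so that $\FL_{r}\big(\pi_{H}(\vec{s})\big)$ is the quotient $\ep^{r}\BD(H)\otimes_{W(k)}R\onto \ep^{r-1}K\otimes_{R}W$. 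Next I would chase $\vec{s}$ across and then down. By Lemma \ref{LemM_1GoesToItself} the image $\Lambda_{r}(\vec{s})$ lies in $\leftidx{_{1}}\CM_{\ep^{r}H}^{\infty}(R,R^{+})$, and Lemma \ref{LemExplExtShortExSeq} computes the short exact sequence it defines to be \emph{literally} the displayed sequence. Applying Lemma \ref{LemPerMorFact} now to $\ep^{r}H$ — and using the identification $\BD(\ep^{r}H)\cong \ep^{r}_{W(k)}\BD(H)$ of Theorem \ref{MainThmHed} (8) to identify $\mathscr{F}\ell_{\ep^{r}H}$ with the flag variety of quotients of $\ep^{r}\BD(H)[1/p]$ — gives that $\pi_{\ep^{r}H}\big(\Lambda_{r}(\vec{s})\big)$ is again the quotient $\ep^{r}\BD(H)\otimes_{W(k)}R\onto \ep^{r-1}K\otimes_{R}W$. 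The two paths then yield the same point of $\mathscr{F}\ell_{\ep^{r}H}(R,R^{+})$, and functoriality in $(R,R^{+})$ gives commutativity of the square.

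The one genuine point to nail down — and what I expect to be the main, though modest, obstacle — is that the two descriptions of the bottom-right quotient agree not merely up to an abstract isomorphism of the target module, but as \emph{quotients}, i.e. as identical points of the flag variety. Concretely, one must check that the surjection $\ep^{r}\BD(H)\otimes R\onto \ep^{r-1}K\otimes W$ coming from the splitting $\BD(H)\otimes R\cong K\oplus W$ in Construction \ref{ConsMapFlagVar} coincides with the surjection read off from the sequence defined by $\Lambda_{r}(\vec{s})$ in Lemma \ref{LemExplExtShortExSeq}. But both arise from the identical linear-algebra recipe (take the $r$-th exterior power of a split sequence with rank-$1$ cokernel and invoke $\ep^{r}(K\oplus W)\cong \ep^{r}K\oplus(\ep^{r-1}K\otimes W)$), so they agree on the nose; the compatibility with $\BD(\ep^{r}H)\cong\ep^{r}\BD(H)$ is exactly Theorem \ref{MainThmHed} (8). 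This closes the argument.
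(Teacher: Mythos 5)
Your proof is correct and follows essentially the same route as the paper: the paper's own (one-line) proof cites exactly the three ingredients you chase through — Lemma \ref{LemExplExtShortExSeq}, the explicit description of $\pi_{G}$ from the proof of Lemma \ref{LemPerMorFact}, and Construction \ref{ConsMapFlagVar} — and your pointwise diagram chase, including the observation that both quotients arise from the same splitting $\ep^{r}(K\oplus W)\cong \ep^{r}K\oplus(\ep^{r-1}K\otimes W)$ together with the identification $\BD(\ep^{r}H)\cong\ep^{r}\BD(H)$ of Theorem \ref{MainThmHed}, is just the worked-out version of that argument.
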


\begin{proof}
This follows from Lemma \ref{LemExplExtShortExSeq}, the proof of Lemma \ref{LemPerMorFact} and the construction of $\FL_{r}$ (Construction \ref{ConsMapFlagVar}).
\end{proof}

\begin{prop}
The following diagram is commutative
\[\xymatrix{\CM_{H}^{\infty}\ar[d]_{\pi_{H}}\ar[rr]^{{\Lambda^{r}}}&&\CM_{\ep^{r}H}^{\infty}\ar[d]^{\pi_{\ep^{r}H}}\\\mathscr{F}\ell_{H}\ar[rr]_{\FL_{r}}&&\mathscr{F}\ell_{\ep^{r}H}}\]
\end{prop}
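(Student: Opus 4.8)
The plan is to deduce this commutativity formally, by pasting together the three facts already established at the level of the intermediate sheaves $\leftidx{_{1}}\CM$. The crucial observation is that both vertical period morphisms factor through the subsheaves $\leftidx{_{1}}\CM$: by Lemma \ref{LemPerMorFact} the morphism $\pi_{H}$ on $\CM_{H}^{\infty}$ equals the composite $\CM_{H}^{\infty}\into\leftidx{_{1}}\CM_{H}^{\infty}\arrover{\pi_{H}}\mathscr{F}\ell_{H}$, and similarly $\pi_{\ep^{r}H}$ factors through $\CM_{\ep^{r}H}^{\infty}\into\leftidx{_{1}}\CM_{\ep^{r}H}^{\infty}$. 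Thus the square in question sits on top of the square of Lemma \ref{LemFirsPerMorDiag}, which already records the desired compatibility $\FL_{r}\circ\pi_{H}=\pi_{\ep^{r}H}\circ\Lambda_{r}$ on the $\leftidx{_{1}}$-level.

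First I would rewrite $\FL_{r}\circ\pi_{H}$ using the factorization of Lemma \ref{LemPerMorFact}, then apply the commutative square of Lemma \ref{LemFirsPerMorDiag} to replace the bottom $\FL_{r}\circ\pi_{H}$ by $\pi_{\ep^{r}H}\circ\Lambda_{r}$, and finally invoke the commutativity of the Cartesian square of Proposition \ref{PropFirstCartDiag}, which asserts that the inclusions intertwine the two wedge morphisms, i.e. $\Lambda_{r}\circ\iota_{H}=\iota_{\ep^{r}H}\circ\Lambda^{r}$ (here $\iota_{H}$, $\iota_{\ep^{r}H}$ denote the inclusions $\CM\into\leftidx{_{1}}\CM$). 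Collapsing the resulting composite with a second application of Lemma \ref{LemPerMorFact} yields the chain
\[\FL_{r}\circ\pi_{H}=\FL_{r}\circ\pi_{H}\circ\iota_{H}=\pi_{\ep^{r}H}\circ\Lambda_{r}\circ\iota_{H}=\pi_{\ep^{r}H}\circ\iota_{\ep^{r}H}\circ\Lambda^{r}=\pi_{\ep^{r}H}\circ\Lambda^{r},\]
which is exactly the asserted commutativity. I would verify this on $(R,R^{+})$-points to make the pasting transparent, tracing a section $\vec{s}\in\CM_{H}^{\infty}(R,R^{+})$ through both paths.

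I do not expect a genuine obstacle: the statement is a purely formal consequence of the three preceding results, and no further $p$-adic Hodge theory input is required. The only point demanding care is the bookkeeping around the abuse of notation by which $\pi_{H}$ (resp. $\pi_{\ep^{r}H}$) denotes both the period morphism on $\CM_{H}^{\infty}$ and its factored version on $\leftidx{_{1}}\CM_{H}^{\infty}$; it is precisely the commutativity of the square in Proposition \ref{PropFirstCartDiag} that legitimizes the middle step, guaranteeing that wedging-then-including and including-then-wedging agree inside $\leftidx{_{1}}\CM_{\ep^{r}H}^{\infty}$.
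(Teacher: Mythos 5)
Your proof is correct and follows exactly the paper's own argument: the paper proves this proposition by citing precisely the same three ingredients — Lemma \ref{LemPerMorFact} (factorization of the period morphisms through $\leftidx{_{1}}\CM$), Lemma \ref{LemFirsPerMorDiag} (commutativity at the $\leftidx{_{1}}\CM$-level), and the commutativity of the square in Proposition \ref{PropFirstCartDiag} — pasted together just as in your chain of equalities. Your write-up merely makes explicit the bookkeeping that the paper's one-line proof leaves to the reader.
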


\begin{proof}
This follows from Lemma \ref{LemPerMorFact}, Lemma \ref{LemFirsPerMorDiag} and Proposition \ref{PropFirstCartDiag}.
\end{proof}

\newpage

\phantomsection
\addcontentsline{toc}{section}{References}

\footnotesize{

\bibliographystyle{acm}}


\end{document}